\newtheorem{thm}{Theorem}[section]
\newtheorem{lem}[thm]{Lemma}
\newtheorem{prop}[thm]{Proposition}
\newtheorem{cor}[thm]{Corollary}
\theoremstyle{definition}
\newtheorem{dfn}[thm]{Definition}
\newtheorem{ex}[thm]{Example}
\newtheorem{rmk}[thm]{Remark}
\newtheorem*{claim}{Claim}
\theoremstyle{remark}
\newtheorem*{ac}{Acknowlegments}
\renewcommand{\qedsymbol}{$\blacksquare$}
\numberwithin{equation}{thm}
\def\Cok{\operatorname{Cok}}
\def\Cokv{\operatorname{\underline{Cok}}}
\def\depth{\operatorname{depth}}
\def\Ext{\operatorname{Ext}}
\def\G{\mathcal{G}}
\def\g{\mathrm{G}}
\def\ge{\geqslant}
\def\grade{\operatorname{grade}}
\def\height{\operatorname{ht}}
\def\Hom{\operatorname{Hom}}
\def\image{\operatorname{Im}}
\def\J{\mathrm{J}}
\def\Ker{\operatorname{Ker}}
\def\Kerv{\operatorname{\underline{Ker}}}
\def\le{\leqslant}
\def\lhom{\operatorname{\underline{Hom}}}
\def\lmod{\operatorname{\underline{mod}}}
\def\m{\mathfrak{m}}
\def\mod{\operatorname{mod}}
\def\p{\mathfrak{p}}
\def\proj{\operatorname{\mathsf{proj}}}
\def\s{\mathrm{S}}
\def\sgrade{\operatorname{s\text{.}grade}}
\def\syz{\Omega}
\def\t{\mathrm{T}}
\def\tf{\mathsf{TF}}
\def\tr{\operatorname{Tr}}
\def\X{\mathcal{X}}
\begin{document}
\allowdisplaybreaks
\title[Morphisms represented by monomorphisms with $n$-torsionfree cokernel]{Morphisms represented by monomorphisms with $n$-torsionfree cokernel}
\author{Yuya Otake}
\address{Graduate School of Mathematics, Nagoya University, Furocho, Chikusaku, Nagoya 464-8602, Japan}
\email{m21012v@math.nagoya-u.ac.jp}

\thanks{2020 {\em Mathematics Subject Classification.} 13D02, 16E05, 13C60, 16D90}
\thanks{{\em Key words and phrases.} morphism represented by monomorphisms, $n$-torsionfree module, stable category, syzygy, (Auslander) transpose, grade}
\begin{abstract}
We introduce and study a new class of morphisms which includes morphisms represented by monomorphisms in the sense of Auslander and Bridger.
As an application, we give not only an extension of Kato's theorem on morphisms represented by monomorphisms, but also a common generalization of several results due to Auslander and Bridger that describe relationships between torsionfreeness and the grades of Ext modules.
\end{abstract}
\maketitle
\section{Introduction}
Throughout this paper, let $R$ be a two-sided noetherian ring.
We assume that all modules are finitely generated right ones.
Denote by $\mod R$ the category of (finitely generated right) $R$-modules.
It is a natural and classical question to ask when a given homomorphism of $R$-modules is stably equivalent to another homomorphism satisfying certain good properties.
A well-studied one is about stable equivalence to a monomorphism: A homomorphism $f:X\to Y$ of $R$-modules is said to be {\em represented by monomorphisms} if there is an $R$-homomorphism $t:X\to P$ with $P$ projective such that $\binom{f}{t}:X\to Y\oplus P$ is a monomorphism.
This notion has been introduced by Auslander and Bridger \cite{AB}, and later studied in detail by Kato \cite{KK}.
Among other things, Kato gave the following characterization; we denote by $\tr(-)$ the (Auslander) transpose.

\begin{thm}[Kato]\label{Katointro}
Let $f:X\to Y$ be an $R$-homomorphism.
Then the following are equivalent.
\begin{enumerate}[\rm(1)]
    \item
    The morphism $f$ is represented by monomorphisms.
    \item
    The map $\Ext_{R^{\mathrm{op}}}^1(\tr f,R):\Ext_{R^{\mathrm{op}}}^1(\tr X,R)\to\Ext_{R^{\mathrm{op}}}^1(\tr Y,R)$ is injective.
\end{enumerate}
\end{thm}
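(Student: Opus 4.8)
The plan is to translate each of the two conditions into a statement about $\Ker\eval_X$, where $\eval_M\colon M\to M^{\ast\ast}$ denotes the canonical biduality map and $(-)^{\ast}=\Hom_R(-,R)$ is the $R$-dual (interchanging $\mod R$ and $\mod R^{\mathrm{op}}$). The starting point is the Auslander--Bridger exact sequence
\[
0\longrightarrow \Ext_{R^{\mathrm{op}}}^{1}(\tr M,R)\longrightarrow M\xrightarrow{\ \eval_M\ } M^{\ast\ast}\longrightarrow \Ext_{R^{\mathrm{op}}}^{2}(\tr M,R)\longrightarrow 0,
\]
obtained by dualizing a finite projective presentation of $M$ twice; it is natural in $M$, so it yields an identification $\Ext_{R^{\mathrm{op}}}^{1}(\tr M,R)\cong\Ker\eval_M$ under which $\Ext_{R^{\mathrm{op}}}^{1}(\tr f,R)$ corresponds to the map $\Ker\eval_X\to\Ker\eval_Y$ induced by $f$ (this map exists because naturality gives $\eval_Y\circ f=f^{\ast\ast}\circ\eval_X$, forcing $f(\Ker\eval_X)\subseteq\Ker\eval_Y$; and $\Ext_{R^{\mathrm{op}}}^{1}(\tr f,R)$ makes sense because $\Ext_{R^{\mathrm{op}}}^{1}(-,R)$ kills projective modules and morphisms, so it depends only on the stable class of $f$). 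Consequently, condition (2) is equivalent to $\Ker f\cap\Ker\eval_X=0$.

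Next I would isolate the observation that does the real work: for \emph{every} homomorphism $t\colon X\to P$ with $P$ projective one has $\Ker\eval_X\subseteq\Ker t$. Indeed $\eval_P\circ t=t^{\ast\ast}\circ\eval_X$ by naturality, and $\eval_P$ is injective since $P$ is (finitely generated) projective, hence $t$ annihilates $\Ker\eval_X$. Given this, both implications are short. For $(1)\Rightarrow(2)$: if $\binom{f}{t}\colon X\to Y\oplus P$ is a monomorphism with $P$ projective, then $\Ker f\cap\Ker\eval_X\subseteq\Ker f\cap\Ker t=\Ker\binom{f}{t}=0$, which is condition (2). For $(2)\Rightarrow(1)$: since $X^{\ast\ast}$ is the $R$-dual of the finitely generated module $X^{\ast}$ (finitely generated because $R$ is noetherian), there is a monomorphism $X^{\ast\ast}\hookrightarrow R^{n}$; putting $P=R^{n}$ and letting $t\colon X\to P$ be the composite $X\xrightarrow{\eval_X}X^{\ast\ast}\hookrightarrow R^{n}$, we get $\Ker t=\Ker\eval_X$, so $\Ker\binom{f}{t}=\Ker f\cap\Ker\eval_X=0$ by (2), i.e.\ $f$ is represented by monomorphisms. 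Morally, among all maps out of $X$ into a projective module the composite $X\to X^{\ast\ast}\hookrightarrow R^{n}$ has the smallest possible kernel, namely $\Ker\eval_X$, and being represented by monomorphisms is precisely the condition that $\Ker f$ avoid that kernel.

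The step I expect to demand the most care is the very first one: verifying that the four-term Auslander--Bridger sequence is natural in $M$ and that the resulting isomorphism $\Ext_{R^{\mathrm{op}}}^{1}(\tr M,R)\cong\Ker\eval_M$ intertwines $\Ext_{R^{\mathrm{op}}}^{1}(\tr f,R)$ with the restriction of $f$. This forces one to treat the transpose correctly up to projective summands (recalling that $\tr$ is contravariant on the stable categories) and to keep the left/right $R$-module structures straight throughout the double dualization. Everything after that is the pair of one-line diagram chases above, together with the elementary fact that finitely generated $R$-duals embed in finitely generated free modules.
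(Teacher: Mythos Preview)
Your proof is correct and follows essentially the same route as the paper's proof of Theorem~\ref{T_1}: both identify $\Ext_{R^{\mathrm{op}}}^1(\tr M,R)$ with $\Ker\eval_M$ via the Auslander--Bridger sequence, reduce (2) to the condition $\Ker f\cap\Ker\eval_X=0$, and then produce a map $t\colon X\to P$ with $\Ker t=\Ker\eval_X$. The only cosmetic difference is that for $(2)\Rightarrow(1)$ the paper invokes an abstract left $\proj R$-approximation of $X$ (whose kernel equals $\Ker\eval_X$ precisely because $t^\ast$ is surjective), whereas you build $t$ explicitly as $X\xrightarrow{\eval_X}X^{\ast\ast}\hookrightarrow R^n$; these are two ways of writing down the same map.
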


Motivated by the above theorem, we define a condition which we call $(\t_n)$ for each integer $n\ge0$ so that $(\t_1)$ is equivalent to being represented by monomorphisms, and find out several properties.
The first main result is the following theorem, which recovers Theorem 1.1 as a special case.

\begin{thm}[Theorems \ref{main2} and \ref{main1}]\label{main12}
Let $n\ge1$ be an integer.
Consider the following conditions for an $R$-homomorphism $f:X\to Y$.
\begin{itemize}
    \item[$(a_n)$] The homomorphism $f$ satisfies $(\t_n)$.
    \item[$(a^\prime_n)$] There is an $R$-homomorphism $t:X\to P$ with $P$ projective such that the map $\binom{f}{t}:X\to Y\oplus P$ is a monomorphism whose cokernel is $(n-1)$-torsionfree and whose $R$-dual is surjective.
    \item[$(a^{\prime\prime}_n)$] There is an $R$-homomorphism $t:X\to P$ with $P$ projective such that the map $\binom{f}{t}:X\to Y\oplus P$ is a monomorphism whose cokernel is $(n-1)$-torsionfree.
    \item[$(b_n)$] The kernel of the map $(f, s):X\oplus Q\to Y$ is $n$-torsionfree for any epimorphism $s:Q\to Y$ with $Q$ projective.
    \item[$(c_n)$] The kernel of the map $\Ext_R^1(f,R):\Ext_R^1(Y,R)\to\Ext_R^1(X,R)$ has grade at least $n$.
\end{itemize}
Then the following implications hold.
$$
(a_n)\wedge(b_{n+1})\Longrightarrow(c_n),\qquad 
(b_n)\wedge(c_n)\Longrightarrow(a_n),\qquad
(a_n)\wedge(c_{n-1})\Longrightarrow(b_n).
$$
Moreover, the implications $(a_n)\Leftrightarrow(a^\prime_n)\Rightarrow(a^{\prime\prime}_n)$ hold, and the implication $(a^{\prime\prime}_n)\Rightarrow(a_n)$ is also true if the $(n-1)$-torsionfree property is closed under extensions.
\end{thm}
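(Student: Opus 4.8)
The plan is to encode both $(\t_n)$ and the torsionfreeness of a cokernel as vanishing/bijectivity conditions for the functors $M\mapsto\Ext_{R^{\mathrm{op}}}^i(\tr M,R)$, and then extract all the implications from one long exact sequence. The two standard inputs I would use are the Auslander--Bridger fundamental exact sequence
\[
0\to\Ext_{R^{\mathrm{op}}}^1(\tr M,R)\to M\to M^{**}\to\Ext_{R^{\mathrm{op}}}^2(\tr M,R)\to0
\]
(where $M^{*}:=\Hom_R(M,R)$), together with the degree shift $\Ext_{R^{\mathrm{op}}}^i(\tr M,R)\cong\Ext_{R^{\mathrm{op}}}^{i-2}(M^{*},R)$ for $i\ge3$, and the fact that $\tr(Y\oplus P)\cong\tr Y$ in $\lmod R^{\mathrm{op}}$ for $P$ projective. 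Now suppose $t\colon X\to P$ makes $g:=\binom{f}{t}\colon X\to Y\oplus P=:Z$ a monomorphism with cokernel $C$ and with $g^{*}\colon Z^{*}\to X^{*}$ surjective. Then one has two short exact sequences $0\to X\to Z\to C\to0$ and $0\to C^{*}\to Z^{*}\to X^{*}\to0$, and splicing the $\Ext_R(-,R)$-long exact sequences of these (for the second, taking $\Ext_{R^{\mathrm{op}}}(-,R)$ again and applying the degree shift) with the fundamental sequences of $X$, $Z$, $C$ produces a long exact sequence
\[
\cdots\to\Ext_{R^{\mathrm{op}}}^i(\tr X,R)\xrightarrow{\ \Ext_{R^{\mathrm{op}}}^i(\tr f,R)\ }\Ext_{R^{\mathrm{op}}}^i(\tr Y,R)\to\Ext_{R^{\mathrm{op}}}^i(\tr C,R)\to\Ext_{R^{\mathrm{op}}}^{i+1}(\tr X,R)\to\cdots\quad(i\ge1),
\]
using that $\tr g$ corresponds to $\tr f$ under $\tr(Y\oplus P)\cong\tr Y$.

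Granting this long exact sequence, the stated equivalences follow quickly. The implication $(a'_n)\Rightarrow(a''_n)$ is immediate, and for the rest I use that $f$ satisfies $(\t_n)$ precisely when $\Ext_{R^{\mathrm{op}}}^i(\tr f,R)$ is bijective for $1\le i\le n-1$ and injective for $i=n$ — this is Kato's theorem when $n=1$ and, for general $n$, is how $(\t_n)$ is formulated. Assume $(a'_n)$, witnessed by such a $t$. Then $f$ is represented by monomorphisms, so $\Ext_{R^{\mathrm{op}}}^1(\tr f,R)$ is injective (Kato), while $C$ being $(n-1)$-torsionfree says $\Ext_{R^{\mathrm{op}}}^i(\tr C,R)=0$ for $i\le n-1$; reading this off the long exact sequence forces $\Ext_{R^{\mathrm{op}}}^i(\tr f,R)$ bijective for $i<n$ and injective for $i=n$, i.e. $(\t_n)$. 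Conversely assume $(a_n)$. Then $(\t_1)$ holds, so $f$ is represented by monomorphisms; choose $t_0\colon X\to P_0$ with $\binom{f}{t_0}$ monic and enlarge it to $t=\binom{t_0}{u}\colon X\to P_0\oplus R^m$, taking the components of $u$ inside $X^{*}=\Hom_R(X,R)$ so that their images generate the finitely generated left $R$-module $\Cok\bigl((Y\oplus P_0)^{*}\to X^{*}\bigr)$. Then $\binom{f}{t}$ is still monic and its $R$-dual is surjective, so the long exact sequence applies; since $(\t_n)$ holds it forces $\Ext_{R^{\mathrm{op}}}^i(\tr C,R)=0$ for $i\le n-1$, i.e. $C:=\Cok\binom{f}{t}$ is $(n-1)$-torsionfree, so $(a'_n)$ holds.

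For the conditional implication $(a''_n)\Rightarrow(a_n)$: given $t\colon X\to P$ with $\binom{f}{t}$ monic and $C=\Cok\binom{f}{t}$ being $(n-1)$-torsionfree, repeat the above enlargement, $t'=\binom{t}{u}\colon X\to P\oplus R^m$, so that the $R$-dual of $\binom{f}{t'}$ becomes surjective. Applying the snake lemma to the projection $Y\oplus P\oplus R^m\twoheadrightarrow Y\oplus P$ yields a short exact sequence $0\to R^m\to C'\to C\to0$ with $C'=\Cok\binom{f}{t'}$; since $R^m$ and $C$ are $(n-1)$-torsionfree and, by hypothesis, $(n-1)$-torsionfree modules are closed under extensions, $C'$ is $(n-1)$-torsionfree. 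Hence $t'$ witnesses $(a'_n)$, and then $(a'_n)\Rightarrow(a_n)$ finishes. (The displayed triangle of implications comes from the same circle of ideas: Schanuel's lemma shows the kernel occurring in $(b_n)$ has a well-defined stable class sitting in a short exact sequence $0\to\syz Y\to K\to X\to0$, and when $g^{*}$ is surjective the identification $\Ext_R^1(C,R)\cong\Ker\Ext_R^1(f,R)$ exhibits the module measured in $(c_n)$; one then plays the vanishing of the $\Ext_{R^{\mathrm{op}}}^i(\tr-,R)$ off against grades of $\Ext_R(-,R)$-modules, using the long exact sequence above and the standard Auslander--Bridger grade inequalities.)

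The step I expect to be the main obstacle is the construction and verification of exactness of that long exact sequence, particularly the splice at cohomological degrees $1$ and $2$, where the connecting homomorphisms are governed by the fundamental four-term sequence rather than by the degree shift; one must check that the maps produced are the expected functorial ones and that the construction is independent of the chosen projective presentations. The remaining points are routine: that the enlargement $u$ always exists is immediate from noetherianness (so $\Hom_R(X,R)$ is finitely generated as a left module), and the extension-closedness hypothesis is needed solely to guarantee that this enlargement preserves $(n-1)$-torsionfreeness of the cokernel.
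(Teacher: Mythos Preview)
Your treatment of the equivalences $(a_n)\Leftrightarrow(a'_n)\Rightarrow(a''_n)$ and of $(a''_n)\Rightarrow(a_n)$ under extension-closedness is correct and essentially matches the paper. One remark: the long exact sequence you want is much cheaper than you fear. When $g^\ast$ is surjective, Auslander--Bridger's Lemma 3.9 applied to $0\to X\to Y\oplus P\to C\to0$ produces directly a short exact sequence $0\to\tr C\to T\to\tr X\to0$ with $T\approx\tr(Y\oplus P)\approx\tr Y$ and with $R$-dual still short exact; the long exact sequence in $\Ext_{R^{\mathrm{op}}}(-,R)$ then drops out immediately, with no splicing of fundamental four-term sequences needed. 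So the step you flagged as the main obstacle is in fact routine.

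The genuine gap is the triangle $(a_n)\wedge(b_{n+1})\Rightarrow(c_n)$, $(b_n)\wedge(c_n)\Rightarrow(a_n)$, $(a_n)\wedge(c_{n-1})\Rightarrow(b_n)$, which you relegate to a parenthetical. Your long exact sequence for $C=\Cokv f$ is only available once $(\t_1)$ holds (you need $\binom{f}{t}$ to be monic), so it cannot even be set up in the implication $(b_n)\wedge(c_n)\Rightarrow(a_n)$, where nothing about $(\t_1)$ is assumed. And even in the two implications where $(a_n)$ (hence $(\t_1)$) is given, your sequence encodes $(a_n)$ and the $(n-1)$-torsionfreeness of $C$, but not directly $(b_n)$ (which concerns $\Kerv f\approx\syz C$) or $(c_n)$ (which concerns $\grade\Ext^1_R(C,R)$); the phrase ``play the vanishing off against grades using standard inequalities'' is not a proof, and no single short exact sequence of transposes relates all three conditions. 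The paper's argument proceeds differently: replace $f$ by the epimorphism $(f,s)$ so that $K=\Ker f\approx\Kerv f$, apply \cite[Lemma~3.9]{AB} to the surjection to obtain $K^\ast\to\tr Y\xrightarrow{t_f}T_X\to\tr K\to0$ with $T_X\approx\tr X$, and factor $t_f$ as $\tr Y\xrightarrow{b}V\xrightarrow{c}T_X$ through $V=\Cok(K^\ast\to\tr Y)$, with $U:=\Ker\Ext^1_R(f,R)=\Ker b$. Then $(c_n)$ becomes exactly a bijectivity/injectivity statement for $\Ext^i(b,R)$, $(b_n)$ becomes the analogous statement for $\Ext^i(c,R)$, and $(a_n)$ is the statement for their composite $\Ext^i(t_f,R)$; the three implications are then read off the factorization. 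You should supply this (or an equivalent) argument rather than the sketch.
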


The notion of $n$-torsionfree modules was also introduced by Auslander and Bridger \cite{AB}, and played a central
role in the theory they developed.
For example, for an $R$-module $M$, Auslander and Bridger figured out the relationship between the grade of the $\Ext$ module $\Ext^i_R(M,R)$ and the torsionfreeness of the syzygy $\syz^i M$.
This result has been playing an important role in studies on $n$-torsionfree modules.
Theorem \ref{main12} can be regarded as a homomorphism version of Auslander and Bridger's theorem.
In fact, their theorem follows easily from Theorem \ref{main12}.
Moreover, we can state a higher version of Auslander and Bridger's theorem as follows.
It gives a common generalization of \cite[Propositions 2.26, 2.28 and Corollary 2.32]{AB}.

\begin{thm}[Theorem \ref{gradethm}]\label{gradethmintr}
Let $n\ge1$ and $j\ge0$ be integers and $M$ an $R$-module. 
The following are equivalent.
\begin{enumerate}[\rm(1)]
     \item
     The inequality $\grade_{R^{\mathrm{op}}}\Ext_R^i(M,R)\ge i+j-1$ holds for all $1\le i\le n$.
     \item
     The syzygy $\syz^i M$ is $i$-torsionfree and the natural map $\psi_M^i:\tr\syz^i\tr\syz^i M\to M$ satisfies $(\t_j)$ for all $1\le i\le n$.
\end{enumerate}
\end{thm}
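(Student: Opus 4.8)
The plan is to argue by induction on $n$, using Theorem \ref{main12} as the main engine together with the standard homological apparatus of Auslander and Bridger: the fundamental exact sequence $0\to\Ext_{R^{\mathrm{op}}}^1(\tr N,R)\to N\to N^{**}\to\Ext_{R^{\mathrm{op}}}^2(\tr N,R)\to0$, the dimension shift $\Ext_R^{i+1}(M,R)\cong\Ext_R^i(\syz M,R)$ for $i\ge1$, the identity $\syz^i(\syz M)=\syz^{i+1}M$, and the comparison of a projective resolution of $\tr\syz^nM$ with the $R$-dual of a projective resolution of $M$, whose cohomology in the pertinent degree is $\Ext_R^n(M,R)$. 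I shall use freely that a submodule of a projective module is torsionless, hence $1$-torsionfree, and that $\Ext_{R^{\mathrm{op}}}^1(\tr L,R)$ is the torsion submodule of $L$.

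For the base case $n=1$: since $\syz M$ is a submodule of a projective module it is $1$-torsionfree, so condition (2) reduces to ``$\psi_M^1$ satisfies $(\t_j)$'', and the task is to show this is equivalent to $\grade_{R^{\mathrm{op}}}\Ext_R^1(M,R)\ge j$. Now $\syz\tr\syz M$ is also a submodule of a projective module, hence torsionless, so the fundamental sequence gives $\Ext_R^1(\tr\syz\tr\syz M,R)=0$, and therefore $\ker\bigl(\Ext_R^1(\psi_M^1,R)\bigr)=\Ext_R^1(M,R)$; thus the grade hypothesis is exactly condition $(c_j)$ of Theorem \ref{main12} for $f=\psi_M^1$. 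It then remains to control the conditions $(b_j)$ and $(b_{j+1})$ for $\psi_M^1$, so that the implications $(b_j)\wedge(c_j)\Rightarrow(a_j)$ and $(a_j)\wedge(b_{j+1})\Rightarrow(c_j)$ of Theorem \ref{main12} (with $(a_j)=(\t_j)$) can be invoked; this is done by analyzing $\ker((\psi_M^1,s))$, which sits in an exact sequence $0\to\syz M\to\ker((\psi_M^1,s))\to\tr\syz\tr\syz M\to0$, together with the fact that $\Cok\psi_M^1$ and $\ker\psi_M^1$ are built from $\Ext_R^1(M,R)$.

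For the inductive step, assume the statement for $n-1$ (every parameter, every module). By the dimension shift and $\syz^i(\syz M)=\syz^{i+1}M$, condition (1) at level $n$ with parameter $j$ for $M$ is the conjunction of $\grade_{R^{\mathrm{op}}}\Ext_R^1(M,R)\ge j$ with condition (1) at level $n-1$ with parameter $j+1$ for $\syz M$; by the induction hypothesis the second conjunct coincides with condition (2) at level $n-1$ with parameter $j+1$ for $\syz M$, i.e.\ with ``$\syz^kM$ is $(k-1)$-torsionfree and $\psi_{\syz M}^{k-1}$ satisfies $(\t_{j+1})$ for $2\le k\le n$''. Hence, after disposing of the term $k=1$ by the base case, the whole matter reduces to a transfer statement: for each $k$ with $2\le k\le n$, the pair ``$\syz^kM$ is $k$-torsionfree and $\psi_M^k$ satisfies $(\t_j)$'' is equivalent to ``$\syz^kM$ is $(k-1)$-torsionfree and $\psi_{\syz M}^{k-1}$ satisfies $(\t_{j+1})$''. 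Its torsionfreeness halves are exactly the cited results of Auslander and Bridger (the $j=0$ case: $\syz^kM$ is $k$-torsionfree precisely when $\grade_{R^{\mathrm{op}}}\Ext_R^i(M,R)\ge i-1$ for $1\le i\le k$), and its halves concerning $(\t_\bullet)$ are produced by a further application of Theorem \ref{main12} to $f=\psi_M^k$ --- through $(a_n)\wedge(c_{n-1})\Rightarrow(b_n)$ and $(b_n)\wedge(c_n)\Rightarrow(a_n)$ --- once $\Cok\psi_M^k$ and $\ker\psi_M^k$, and therefore the torsionfreeness of $\ker((\psi_M^k,s))$ via $0\to\syz M\to\ker((\psi_M^k,s))\to\tr\syz^k\tr\syz^kM\to0$, have been identified with modules assembled from $\Ext_R^i(M,R)$ for $i\le k$, the decisive one being $\Ext_R^k(M,R)\cong\Ext_R^1(\syz^{k-1}M,R)$.

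The main obstacle is precisely this transfer statement, and, inseparably from it, the verification of the $(b)$-type hypotheses needed to invoke Theorem \ref{main12} for the maps $\psi_M^k$: since $\psi_M^k$ and $\psi_{\syz M}^{k-1}$ are not related by any evident natural transformation, one cannot pass between them formally, but must instead make the kernel and cokernel of $\psi_M^k$ explicit from the comparison of resolutions and read the grade of $\Ext_R^k(M,R)$ off that description. Once this is done, the remaining steps --- the dimension shifting and the elementary arithmetic of grades --- are routine.
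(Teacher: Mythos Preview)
Your strategy—induct on $n$ and transfer between $\psi_M^k$ and $\psi_{\syz M}^{k-1}$—differs substantially from the paper's, and the step you yourself flag as ``the main obstacle'' is a genuine gap that the outline does not close.

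There are two concrete problems. First, already in the base case $n=1$ you need the $(b)$-conditions of Theorem~\ref{main2} for $\psi_M^1$, i.e.\ that $\Kerv\psi_M^1$ is $j$-torsionfree (respectively $(j{+}1)$-torsionfree). Your sequence $0\to\syz M\to\Kerv\psi_M^1\to\J_1^2M\to0$ does not deliver this: $\syz M$ is only $1$-torsionfree in general, and no extension-closure is available. The paper instead invokes \cite[Proposition~2.21]{AB}, which produces an exact sequence $0\to P\to\J_1^2M\oplus Q\xrightarrow{(\psi_M^1,s)}M\to0$ with $P$ \emph{projective}; hence $\Kerv\psi_M^1$ is stably projective and every $(b_j)$ is free. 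Your assertion that ``$\Cok\psi_M^1$ and $\ker\psi_M^1$ are built from $\Ext_R^1(M,R)$'' is not correct in the sense you need.

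Second, and more seriously, the transfer statement for $k\ge2$ has no mechanism behind it. There is no natural map, nor any identity up to $\tr$ and $\syz$, relating $\psi_M^k$ to $\psi_{\syz M}^{k-1}$, and applying Theorem~\ref{main2} to $\psi_M^k$ yields no information about $\psi_{\syz M}^{k-1}$. The paper circumvents this entirely: it introduces the factorizing maps $\psi_M^{i,i-1}:\J_i^2M\to\J_{i-1}^2M$ with $\underline{\psi_M^i}=\underline{\psi_M^{i-1}}\,\underline{\psi_M^{i,i-1}}$, shows that condition~(2) is equivalent to the analogous condition on the $\psi_M^{i,i-1}$, and then proves the key identification (Lemma~\ref{extex}): under the grade hypothesis one has $\Kerv\psi_M^{i,i-1}\approx\tr\syz^{i-2}\Ext_R^i(M,R)$. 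This is precisely the ``identification with modules assembled from $\Ext_R^i(M,R)$'' you say must be carried out—but it is performed for $\psi_M^{i,i-1}$, not for $\psi_M^i$, and it is the substance of the argument. With it in hand the paper inducts on $j$ (not $n$): the step $j=1$ uses \cite[Proposition~2.21]{AB} again (now $\Kerv\psi_M^i$ has projective dimension $\le i-1$, so $\syz^{i-1}\psi_M^i$ has projective stable kernel) together with a naturality square for $\psi_{(-)}^{i-1}$; for $j>1$ the torsionfreeness of $\tr\syz^{i-2}\Ext^i(M,R)$ read through Theorem~\ref{main2} gives $\grade\Ext^i(M,R)\ge i+j-1$ directly.
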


Finally, let us consider the case where $R$ is commutative.
The local Gorensteinness of commutative rings is an actively studied subject in commutative algebra.
For example, the relationship between the local Gorensteinness of the ring $R$ and the structure of the category of $n$-torsionfree modules has been well-studied; see \cite{AB, DT, EG, GT, MTT} for instance.
Using the above theorem, we can also interpret the (local) Gorensteinness in terms of the natural maps $\psi_M^i:\tr\syz^i\tr\syz^i M\to M$ being represented by monomorphisms as follows.

\begin{cor}[Proposition \ref{psik} and Corollary \ref{S_nG_n-1}]\label{psiintr}
Suppose that $R$ is commutative.
\begin{enumerate}[\rm(1)]
     \item
     Assume that $R$ is local and with depth $t$.
     Let $k$ be the residue field of $R$.
     The following are equivalent.
     \begin{enumerate}[\rm(i)]
         \item
         The ring $R$ is Gorenstein.
         \item
        The natural map $\psi^{t+1}_M:\tr\syz^{t+1}\tr\syz^{t+1} M\to M$ is represented by monomorphisms for all $R$-modules $M$.
        \item
        The natural map $\psi_k^{t+1}:\tr\syz^{t+1}\tr\syz^{t+1} k\to k$ is represented by monomorphisms.
     \end{enumerate}
     \item
     Let $n\ge1$ be an integer.
     The following are equivalent.
     \begin{enumerate}[\rm(i)]
         \item
         The localization $R_\p$ is Gorenstein for all prime ideals $\p$ with $\depth R_{\p}<n$
         \item
         The natural map $\psi^{i}_M:\tr\syz^i\tr\syz^i M\to M$ is represented by monomorphisms for all $R$-modules $M$ and for all integers $1\le i\le n$.
     \end{enumerate}
\end{enumerate}
\end{cor}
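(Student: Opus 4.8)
The strategy is to obtain everything from Theorem~\ref{gradethmintr} by taking $j=1$ there: since $(\t_1)$ means ``represented by monomorphisms'', condition~(2) of that theorem then reads that, for every $1\le i\le n$, the syzygy $\syz^iM$ is $i$-torsionfree and $\psi_M^i$ is represented by monomorphisms. Besides this, the proof only uses two standard facts: that a commutative noetherian local ring $R'$ of depth $d$ with residue field $k'$ is Gorenstein if and only if $\Ext^i_{R'}(k',R')=0$ for some (equivalently every) $i>d$; and the Auslander--Bridger criterion that $\syz^nM$ is $n$-torsionfree whenever $\grade_R\Ext^i_R(M,R)\ge i$ for all $1\le i\le n-1$.

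For part~(1), put $t=\depth R$. Since $\Ext^i_R(k,R)=0$ for $i<t$ and $\Ext^i_R(k,R)$ is annihilated by a power of $\m$ for $i\ge t$, the inequality $\grade_R\Ext^i_R(k,R)\ge i$ holds automatically for $1\le i\le t$, while for $i=t+1$ it is equivalent to $\Ext^{t+1}_R(k,R)=0$, hence to $R$ being Gorenstein. Now (i)$\Rightarrow$(ii): if $R$ is Gorenstein then $\operatorname{id}_{R_\p}R_\p=\depth R_\p$ for every prime $\p$, so $\grade_R\Ext^i_R(M,R)\ge i$ for all $M$ and all $i\ge1$, and Theorem~\ref{gradethmintr} (with $n=t+1$, $j=1$) shows $\psi_M^{t+1}$ is represented by monomorphisms. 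The implication (ii)$\Rightarrow$(iii) is the case $M=k$. For the decisive implication (iii)$\Rightarrow$(i): the automatic grade bounds at levels $\le t$ yield, via Theorem~\ref{gradethmintr} applied at level $t$ (and the syzygy criterion for $i=t+1$), that $\syz^ik$ is $i$-torsionfree for $1\le i\le t+1$ and that $\psi_k^i$ is represented by monomorphisms for $1\le i\le t$; together with (iii) this is precisely condition~(2) of Theorem~\ref{gradethmintr} for $M=k$, $n=t+1$, $j=1$. Hence $\grade_R\Ext^{t+1}_R(k,R)\ge t+1$, forcing $\Ext^{t+1}_R(k,R)=0$, so $R$ is Gorenstein.

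For part~(2), the implication (i)$\Rightarrow$(ii) is direct: if $R_\p$ is Gorenstein whenever $\depth R_\p<n$, then for any $\p$ in the support of $\Ext^i_R(M,R)$ with $i\le n$ we cannot have $\depth R_\p<i$, so $\grade_R\Ext^i_R(M,R)\ge i$ for all $M$ and $1\le i\le n$, and Theorem~\ref{gradethmintr} (level $n$, $j=1$) gives (ii). For (ii)$\Rightarrow$(i) I would localize: formation of the transpose, of syzygies, and hence of the maps $\psi_M^i$ commutes with localization, and injectivity of a map of finitely generated modules is inherited by localizations; since every finitely generated $R_\p$-module is a localization of one over $R$, hypothesis (ii) over $R$ yields the analogous statement over $R_\p$ for each prime $\p$. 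When $\depth R_\p=d<n$ this gives in particular that $\psi_N^{d+1}$ is represented by monomorphisms over $R_\p$ for every $R_\p$-module $N$, and applying part~(1) to the local ring $R_\p$ (of depth $d$) shows $R_\p$ is Gorenstein.

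I expect the main obstacle to be the verification behind (iii)$\Rightarrow$(i) in part~(1): one must check carefully that, for the residue field, every part of condition~(2) of Theorem~\ref{gradethmintr} at level $t+1$ \emph{except} ``$\psi_k^{t+1}$ is represented by monomorphisms'' is automatic, which rests on the vanishing and $\m$-torsion structure of the modules $\Ext^i_R(k,R)$ together with the two cited commutative-algebra facts. Once this is in place, the remaining implications, including the localization bookkeeping in part~(2), are routine.
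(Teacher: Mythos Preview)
Your overall strategy is sound and the argument goes through, but one ingredient is misstated. The Auslander--Bridger criterion you quote is off by one index: the correct version (Corollary~\ref{ABProp2.26} in the paper) says that $\syz^iM$ is $i$-torsionfree for all $1\le i\le n$ if and only if $\grade_R\Ext^i_R(M,R)\ge i-1$ for all $1\le i\le n$. Your formulation ``$\grade\Ext^i(M,R)\ge i$ for $1\le i\le n-1$'' drops the condition at level $i=n$ and is false in general. Fortunately this does not damage your application: for $M=k$ and $n=t+1$ the missing hypothesis $\grade_R\Ext^{t+1}_R(k,R)\ge t$ is also automatic, since $\Ext^{t+1}_R(k,R)$ has finite length and $\depth R=t$. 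So your (iii)$\Rightarrow$(i) works once you cite the criterion correctly.

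With that fix, your route for (iii)$\Rightarrow$(i) is genuinely different from the paper's. The paper passes through an auxiliary condition on the map $\psi_k^{t+1,t}$ and then invokes an external result of Dey--Takahashi on torsionfreeness of syzygies of the residue field. You instead stay entirely within Theorem~\ref{gradethmintr} and close with the elementary observation that a nonzero finite-length module over a depth-$t$ local ring has grade exactly $t$, so $\grade\Ext^{t+1}_R(k,R)\ge t+1$ forces vanishing; this is more self-contained. For (i)$\Rightarrow$(ii) and for part~(2) your approach and the paper's coincide in substance: the paper cites \cite[Proposition~4.21 and Corollary~2.32]{AB}, which is precisely the $j=1$ case of Theorem~\ref{gradethmintr} together with the grade computation you spell out, and both arguments finish part~(2) by localization. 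The paper takes the small shortcut of testing only $M=R/\p$ (so that the residue field of $R_\p$ is visibly a localization), whereas you verify the full hypothesis over $R_\p$; either works.
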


The organization of this paper is as follows.
In Section 2, we state several notions and their basic properties for later use.
In Section 3, we give the definition of the condition $(\t_n)$.
We derive its basic properties and prove Theorem \ref{main12}.
In Section 4, we recall the construction of the natural map $\psi^i_{M}:\tr\syz^i\tr\syz^i M\to M$, and apply a result obtained in Section 3 to $\psi^i_M$.
We show Theorem \ref{gradethmintr} in this section.
In the final Section 5, we describe local Gorensteinness of commutative rings in terms of $\psi^i_M$ being represented by monomorphisms.

\section{Preliminaries}
In this section, we introduce some notions and terminologies.
For an $R$-homomorphism $f$, we denote by $\Cok f$ the cokernel of $f$.
Also, we denote by $(-)^\ast$ the $R$-dual functor $\Hom_R(-,R)$.
We begin with fixing some notation.

\begin{dfn}\label{dfn1.1}
\begin{enumerate}[\rm(1)]
    \item
    We denote by $\lmod R$ the {\em stable category} of $\mod R$.
    The objects of $\lmod R$ are the same as those of $\mod R$.
    The morphism set of objects $X, Y$ of $\lmod R$ is defined by
    $$
    \Hom_{\lmod R}(X,Y)
    =\lhom_R(X,Y)
    =\Hom_R(X,Y)/{\mathcal{P}(X,Y)},
    $$
    where $\mathcal{P}(X,Y)$ is the set of $R$-homomorphisms $X\to Y$ factoring through some projective modules.
    For any homomorphism $f:X\to Y$, we denote by $\underline{f}$ the image of $f$ in $\lhom_R(X,Y)$.
    \item
    Let $X$ be an $R$-module and $\cdots\to P_n\xrightarrow{\partial_n}P_{n-1}\to\cdots\to P_1\xrightarrow{\partial_1}P_0\xrightarrow{\partial_0}X\to0$ a projective resolution of $X$.
    \begin{itemize}
        \item[(2-a)]
        Let $n\ge0$ be an integer.
        The {\em $n$th syzygy} $\syz^n X$ of $X$ is defined as $\image \partial_n$.
        Then $\syz^n X$ is uniquely determined by $X$ up to projective summands.
        Note that taking the $n$th syzygy induces an additive functor $\syz^n:\lmod R\to\lmod R$.
        \item[(2-b)]
        The {\em transpose} $\tr X$ of $X$ is defined as $\Cok \partial_1^\ast$.
        Then $\tr X$ is uniquely determined by $X$ up to projective summands.
        Note that taking the transpose induces an additive functor $\tr:\lmod R\to\lmod R^{\mathrm{op}}$.
    \end{itemize}
\end{enumerate}
\end{dfn}

In general, the notion of a left (resp. right) $\X$-approximation is defined for any full subcategory $\X$ of $\mod R$.
For the details, we refer to \cite{AS}.
Denote by $\proj R$ the full subcategory of $\mod R$ consisting of projective modules.
We can state the definition of a left (resp. right) $\proj R$-approximation as follows.

\begin{dfn}\label{dfn1.2}
 A homomorphism $f: P\to X$ (resp. $f:X\to P$) of $R$-modules is said to be a {\em right} (resp. {\em left}) {\em $\proj R$-approximation} of $X$ if $P\in\proj R$ and $f$ (resp. $f^\ast$) is surjective.
\end{dfn}

We frequently use the following terminologies about morphisms in $\lmod R$.

\begin{dfn}\label{dfn1.3}
\begin{enumerate}[\rm(1)]
    \item
    A homomorphism $f:X\to Y$ of $R$-modules is called a {\em stable isomorphism} if $\underline{f}$ is an isomorphism in the category $\lmod R$, that is, if there exists a homomorphism $g:Y\to X$ such that $\underline{gf}=\underline{1_X}$ and $\underline{fg}=\underline{1_Y}$.
    When this is the case, we say that $X$ and $Y$ are {\em stably isomorphic}, and write $X\approx Y$.
    \item
    Let $f:X\to Y$ and $g:X^\prime\to Y^\prime$ be $R$-homomorphisms.
    By $f\approx g$ we mean that there are stable isomorphisms $s:X\to X^\prime$ and $t:Y\to Y^\prime$ such that $\underline{tf}=\underline{gs}$.
    \item\cite[Definition and Lemma 2.11]{KK}
    Let $f:X\to Y$ be a homomorphism of $R$-modules.
    Let $s:P\to Y$ (resp. $s:X\to P$) be a right (resp. left) $\proj R$-approximation of $Y$ (resp. $X$).
    The module $\Kerv f$ (resp. $\Cokv f$) is defined as $\Ker\left((f,s):X\oplus P\to Y\right)$ (resp. $\Cok\left(\scriptsize{\begin{pmatrix}f \\s\end{pmatrix}}:X\to Y\oplus P\right)$).
    The module $\Kerv f$ (resp. $\Cokv f$) is uniquely determined by $f$ up to projective summands.
\end{enumerate}
\end{dfn}

Here is a collection of some statements on the notions introduced above.

\begin{rmk}\label{rmkst}
\begin{enumerate}[\rm(1)]
    \item
    Let $X$ and $Y$ be $R$-modules.
    Then $X\approx Y$ if and only if $X\oplus P\cong Y\oplus Q$ for some projective $R$-modules $P, Q$.
    \item
    Let $f:X\to Y$ and $g:X^\prime\to Y^\prime$ be $R$-homomorphisms.
    If $f\approx g$, then $\Kerv f\approx\Kerv g$ and $\Cokv f\approx\Cokv g$.
    If $f$ is surjective, then $\Kerv f\approx\Ker f$ .
    See \cite[Definition and Lemma 2.11]{KK} for details.
    \item
    Let $0\to X\xrightarrow{f}Y\xrightarrow{g}Z\to0$ be an exact sequence in $\mod R$.
    Then there exists an exact sequence $0\to\syz Z\to X\oplus P\xrightarrow{(f,t)}Y\to0$ in $\mod R$ with $P$ projective.
    Doing the same, we get an exact sequence $0\to\syz Y\xrightarrow{h}\syz Z\oplus Q\to X\oplus P\to0$, where $Q$ is projective and $h\approx\syz g$.
    It naturally gives an exact sequence $0\to\syz Y\to\syz Z\oplus F\to X\to0$ with $F$ projective.
\end{enumerate}
\end{rmk}

The notion of $n$-torsionfree modules was introduced by Auslander and Bridger \cite{AB}, and played a central role in the stable module theory they developed.
It is also an important concept in this paper.

\begin{dfn}\label{dfn1.4}
\begin{enumerate}[\rm(1)]
    \item
    Let $n\ge0$ be an integer.
    An $R$-module $X$ is said to be {\em $n$-torsionfree} if $\Ext_{R^{\mathrm{op}}}^i(\tr X,R)=0$ for all $1\le i\le n$.
    We denote by $\tf_n(R)$ the full subcategory of $\mod R$ consisting of $n$-torsionfree modules.
    \item
    A full subcategory $\X$ of $\mod R$ is said to be {\em closed under extensions} if for every exact sequence $0\to X\to Y\to Z\to0$ in $\mod R$ with $X, Z\in\X$, it holds that $Y\in\X$.
\end{enumerate}
\end{dfn}

We recall the definitions of heights and depths.
The details can be found in \cite{Mat}.

\begin{dfn}
Suppose that $R$ is commutative.
\begin{enumerate}[\rm(1)]
    \item
    The {\em height} of a prime ideal $\p$ of $R$, denoted by $\height\p$, is defined to be the supremum of the lengths of all strictly decreasing chains of prime ideals contained in $\p$.
    \item
    When $R$ is a local ring with maximal ideal $\m$, the {\em depth} of an $R$-module $M$ is defined to be the infimum of integers $i$ such that $\Ext^i_R(R/{\m}, M)\ne0$, and denoted by $\depth_R M$.
\end{enumerate}
\end{dfn}

We recall the definitions of Serre's condition $(\s_n)$ and the local Gorensteinness condition $(\g_n)$.
These conditions are related to the $n$-torsionfreeness of modules; see \cite{AB, EG, GT, MTT} for instance.

\begin{dfn}
Suppose that $R$ is commutative.
Let $n$ be an integer.
\begin{enumerate}[\rm(1)]
    \item
    Let $M$ be an $R$-module.
    We say that $M$ satisfies {\em Serre's condition} $(\s_n)$ if the inequality $\depth_{R_\p}M_\p\ge\min\{n,\height\p\}$ holds for all prime ideals $\p$ of $R$.
    \item
    We say that $R$ satisfies $(\g_n)$ if the localization $R_\p$ is a Gorenstein local ring for all prime ideals $\p$ of $R$ with $\height\p\le n$.
\end{enumerate}
\end{dfn}

\begin{rmk}\label{tfrmk}
Let $n\ge0$ be an integer.
\begin{enumerate}[\rm(1)]
    \item
    Let $M$ be an $R$-module.
    We denote by $\varphi_M:M\to M^{\ast\ast}$ the canonical map given by $\varphi_M(x)(f)=f(x)$ for $x\in M$ and $f\in M^\ast$.
    By \cite[Proposition 2.6]{AB}, $M$ is $1$-torsionfree if and only if $\varphi_M$ is injective, that is, $M$ is torsionless.
    Similarly, $M$ is $2$-torsionfree if and only if $\varphi_M$ is bijective, that is, $M$ is reflexive.
    \item
    All $n$-torsionfree modules are $n$-syzygy; see \cite[Theorem 2.17]{AB}.
    \item
    Suppose that $R$ is commutative.
    If $R$ satisfies $(\s_n)$, then so do all the $n$-syzygy modules.
    This assertion is proved by the depth lemma \cite[Proposition 1.2.9]{BH}.
\end{enumerate}
\end{rmk}

\section{The condition $(\t_n)$}
We begin with introducing the new condition $(\t_n)$ for $R$-homomorphisms.
It is a natural extension of the condition (2) in Theorem \ref{Katointro}.

\begin{dfn}\label{Tn}
Let $n\ge1$ be an integer.
We say that a homomorphism $f:X\to Y$ of $R$-modules satisfies $(\t_n)$ if the map $\Ext^i_{R^{\mathrm{op}}}(\tr f,R)$ is bijective for all $1\le i\le n-1$ and $\Ext^n_{R^{\mathrm{op}}}(\tr f,R)$ is injective.
In addition, we provide that every $R$-homomorphism satisfies $(\t_0)$ and an $R$-homomorphism $f$ satisfies $(\t_{\infty})$ if $f$ satisfies $(\t_n)$ for all $n\ge0$. 
\end{dfn}

In this section, we explore those homomorphisms which satisfy the condition $(\t_n)$.
Below we give several statements which easily follow by definition.

\begin{rmk}\label{bydef}
Let $f:X\to Y$, $f^\prime:X^\prime\to Y^\prime$ and $g:Y\to Z$ be $R$-homomorphisms. Let $n\ge1$ be an integer.
\begin{enumerate}
    \item
    Suppose that $f\approx f^\prime$. Then $f$ satisfies $(\t_n)$ if and only if so does $f^\prime$.
    \item 
    If $g$ and $f$ satisfy $(\t_n)$, then $gf$ satisfies $(\t_n)$.
    \item
    If $gf$ satisfies $(\t_1)$, then $f$ satisfies $(\t_1)$.
    In general, if $gf$ and $g$ satisfy $(\t_n)$, then $f$ satisfies $(\t_n)$.
    \item
    Consider the following conditions.
    \begin{itemize}
        \item[$(a_n)$] The homomorphism $f$ satisfies $(\t_n)$.
        \item[$(b_n)$] The module $X$ is $n$-torsionfree.
        \item[$(c_n)$] The module $Y$ is $n$-torsionfree.
    \end{itemize}
    Then the following implications hold.
    $$
    (a_{n+1})\wedge(b_n)\Longrightarrow(c_n),\qquad 
    (b_n)\wedge(c_{n-1})\Longrightarrow(a_n),\qquad
    (a_n)\wedge(c_n)\Longrightarrow(b_n).
    $$
    \item
    Stable isomorphisms satisfy $(\t_\infty)$.
\end{enumerate}
\end{rmk}

Now, let us investigate the condition $(\t_1)$.
Kato \cite[Theorem 3.9]{KK} proved the following theorem, which asserts that $(\t_1)$ is equivalent to being represented by monomorphisms, using the homotopy theory developed in \cite[Sections 2 and 3]{KK}.
Using \cite[Proposition 2.6]{AB}, we can provide another simpler proof of Kato's theorem as follows.

\begin{thm}[Kato]\label{T_1}
Let $f:X\to Y$ be a homomorphism of $R$-modules.
The following are equivalent.
\begin{enumerate}[\rm(1)]
    \item
    The homomorphism $f$ is represented by monomorphisms, i.e., there is an $R$-homomorphism $t:X\to P$ with $P$ projective such that $\binom{f}{t}:X\to Y\oplus P$ is a monomorphism.
    \item
    The homomorphism $f$ satisfies $(\t_1)$.
    \item
    One has $\Ker f\cap\Ker\varphi_X=0$.
\end{enumerate}
\end{thm}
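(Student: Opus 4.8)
The plan is to reduce everything to the fundamental exact sequence of Auslander and Bridger: for every $R$-module $M$ there is an exact sequence
$$
0\longrightarrow\Ext^1_{R^{\mathrm{op}}}(\tr M,R)\longrightarrow M\xrightarrow{\ \varphi_M\ }M^{\ast\ast}\longrightarrow\Ext^2_{R^{\mathrm{op}}}(\tr M,R)\longrightarrow0,
$$
natural in $M$, by \cite[Proposition 2.6]{AB}. In particular this identifies $\Ext^1_{R^{\mathrm{op}}}(\tr M,R)$ with $\Ker\varphi_M$ functorially in $M$; since $\varphi$ is a natural transformation we have $f(\Ker\varphi_X)\subseteq\Ker\varphi_Y$, and under the identification the map $\Ext^1_{R^{\mathrm{op}}}(\tr f,R)$ becomes the restriction $f|_{\Ker\varphi_X}:\Ker\varphi_X\to\Ker\varphi_Y$ of $f$. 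Hence $f$ satisfies $(\t_1)$, i.e.\ $\Ext^1_{R^{\mathrm{op}}}(\tr f,R)$ is injective, if and only if $\Ker(f|_{\Ker\varphi_X})=\Ker f\cap\Ker\varphi_X=0$. This settles the equivalence $(2)\Leftrightarrow(3)$.

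For $(1)\Rightarrow(3)$, the point I would isolate is that $\Ker\varphi_X\subseteq\Ker t$ for \emph{every} $R$-homomorphism $t:X\to P$ with $P$ projective. Indeed, $P$ is a direct summand of some finitely generated free module $R^{m}$, and the components of the composite $X\xrightarrow{t}P\hookrightarrow R^{m}$ are elements of $X^{\ast}=\Hom_R(X,R)$, each of which vanishes on $\Ker\varphi_X$ by the very definition of $\varphi_X$; hence $t$ vanishes on $\Ker\varphi_X$. Consequently, if $\binom{f}{t}:X\to Y\oplus P$ is a monomorphism, then $\Ker f\cap\Ker\varphi_X\subseteq\Ker f\cap\Ker t=\Ker\binom{f}{t}=0$.

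For $(3)\Rightarrow(1)$, I would produce a ``universal'' choice of $t$. Pick a surjection $R^{k}\to X^{\ast}$ from a finitely generated free left $R$-module; applying $\Hom_R(-,R)$ yields a monomorphism $X^{\ast\ast}\hookrightarrow R^{k}$, and the composite $t:X\to R^{k}$ of $\varphi_X$ with this monomorphism satisfies $\Ker t=\Ker\varphi_X$. Taking $P:=R^{k}$, condition $(3)$ gives $\Ker\binom{f}{t}=\Ker f\cap\Ker\varphi_X=0$, so $f$ is represented by monomorphisms. Combining $(2)\Leftrightarrow(3)$, $(1)\Rightarrow(3)$ and $(3)\Rightarrow(1)$ completes the proof.

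The only point requiring genuine care is the identification in the first paragraph of $\Ext^1_{R^{\mathrm{op}}}(\tr f,R)$ with the restriction of $f$ to $\Ker\varphi_X$. This rests on the Auslander--Bridger sequence being natural in $M$, which in turn asks us to regard $\Ext^1_{R^{\mathrm{op}}}(\tr(-),R)$ as a genuine functor on $\mod R$ — legitimate because $\tr$ is functorial up to projective summands and $\Ext^1_{R^{\mathrm{op}}}(-,R)$ annihilates projective modules. Once this naturality, already provided by \cite[Proposition 2.6]{AB}, is in hand, the remaining steps are elementary.
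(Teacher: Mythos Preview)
Your proof is correct and follows essentially the same route as the paper: both establish $(2)\Leftrightarrow(3)$ via the naturality of the Auslander--Bridger sequence identifying $\Ext^1_{R^{\mathrm{op}}}(\tr f,R)$ with $f|_{\Ker\varphi_X}$, and both prove $(3)\Rightarrow(1)$ by producing a map $t:X\to P$ with $\Ker t=\Ker\varphi_X$. The only cosmetic differences are that the paper packages your explicit construction of $t$ as a ``left $\proj R$-approximation'' (Definition~\ref{dfn1.2}), and for the remaining implication the paper argues $(1)\Rightarrow(2)$ by observing that $\binom{f}{t}$ satisfies $(\t_1)$ and is stably equivalent to $f$, whereas you argue $(1)\Rightarrow(3)$ directly by showing $\Ker\varphi_X\subseteq\Ker t$ for every map $t$ into a projective; these are equivalent observations.
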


\begin{proof}
We have a commutative diagram
\begin{equation*}
\xymatrix@R-1pc@C-1pc{
&0\ar[d]&&0\ar[d]&& \\
0\ar[r]&\Ker f\cap\Ker\varphi_X\ar[rr]\ar[dd]&&\Ker f\ar[dd]&& \\
&&&&& \\
0\ar[r]&\Ker\varphi_X\ar[rr]\ar[dd]^{\hat{f}}&&X\ar[rr]^{\varphi_X}\ar[dd]^{f}&&X^{\ast\ast}\ar[dd]  \\
&&&&& \\
0\ar[r]&\Ker\varphi_Y\ar[rr]&&Y\ar[rr]^{\varphi_Y}&&Y^{\ast\ast}
}
\end{equation*}
with exact rows and columns.
By \cite[Proposition 2.6(a)]{AB}, there is a commutative diagram:
\begin{equation*}
\xymatrix@R-1pc@C-1pc{
\Ker\varphi_X\ar[rrr]^-{\sim}\ar[dd]^{\hat{f}}&&&\Ext^1(\tr X,R)\ar[dd]^{\Ext^1(\tr f,R)} \\
&& \\
\Ker\varphi_Y\ar[rrr]^-{\sim}&&&\Ext^1(\tr Y,R).
}
\end{equation*}
Therefore, one has $\Ker f\cap\Ker\varphi_X\cong\Ker\Ext^1(\tr f,R)$ and we have the equivalence $(2)\Leftrightarrow(3)$.

Next, suppose that $(1)$ holds, i.e., there is a homomorphism $t:X\to P$ with $P$ projective such that $\binom{f}{t}$ is injective.
As $\Ker \binom{f}{t}\cap\Ker\varphi_X=0$, we obtain that $\binom{f}{t}$ satisfies $(\t_1)$.
Thus $f$ also satisfies $(\t_1)$, and the implication $(1)\Rightarrow(2)$ follows.

Finally, assume that $(3)$ holds and take a left $\proj R$-approximation $t:X\to P$ of $X$.
Then we see that $\Ker\varphi_X=\Ker t$.
We have $\Ker \binom{f}{t}=\Ker f\cap\Ker t=\Ker f\cap\Ker\varphi_X=0$, that is, $\binom{f}{t}$ is injective.
Therefore, the implication $(3)\Rightarrow(1)$ holds and we are done.
\end{proof}

If a homomorphism $f$ satisfies $(\t_1)$, then there is a close relationship between $\Kerv f$ and $\Cokv f$  as follows.

\begin{lem}\cite[Lemma 2.17, Theorem 4.12]{KK}\label{diag}
Let $f:X\to Y$ be a homomorphism of $R$-modules.
Let $s:X\to P$ be a left $\proj R$-approximation of $X$ and $t:Q\to Y$ a right $\proj R$-approximation of $Y$.
Then there exist exact sequences
\begin{equation}\label{diag1.1}
0\to\Ker f\to\Kerv f\to Q\to\Cok f\to0
\end{equation}
\begin{equation}\label{diag1.2}
0\to\Ker f/(\Ker f\cap\Ker s)\to P\to\Cokv f\to\Cok f\to0.
\end{equation}
In particular, if $f$ satisfies $(\t_1)$, then $0\to\Ker f\to P\to\Cokv f\to\Cok f\to0$ is exact and $\syz\Cokv f\approx\Kerv f$ .
\end{lem}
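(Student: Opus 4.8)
The plan is to establish the two four-term exact sequences by writing down the connecting homomorphisms explicitly and running short diagram chases, and then to deduce the ``in particular'' statement from Theorem \ref{T_1}.

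For \eqref{diag1.1}, recall from Definition \ref{dfn1.3}(3) that $\Kerv f=\Ker\left((f,t)\colon X\oplus Q\to Y\right)$, and that $(f,t)$ is surjective since $t$ is. I would use the maps $\Ker f\to\Kerv f$, $x\mapsto(x,0)$; the projection $\Kerv f\to Q$, $(x,q)\mapsto q$; and $Q\to\Cok f$, $q\mapsto t(q)+\image f$. Injectivity of the first and surjectivity of the third are clear, and exactness in the two middle spots is immediate: the kernel of the projection is $\{(x,0):f(x)=0\}\cong\Ker f$, and its image is $\{q:t(q)\in\image f\}$, which equals $\Ker\left(Q\to\Cok f\right)$ because $t$ is onto. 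For \eqref{diag1.2}, write $g=\binom{f}{s}$, so $\Cokv f=(Y\oplus P)/\image g$, and take $\Ker f/(\Ker f\cap\Ker s)\to P$ induced by $s$; the map $P\to\Cokv f$, $p\mapsto(0,p)+\image g$; and $\Cokv f\to\Cok f$, $(y,p)+\image g\mapsto y+\image f$. Here the points that need a line of checking are that the kernel of $P\to\Cokv f$ is $s(\Ker f)\cong\Ker f/(\Ker f\cap\Ker s)$, and that the kernel of $\Cokv f\to\Cok f$, namely $\{(y,p)+\image g:y\in\image f\}$, equals the image of $P\to\Cokv f$ (given such an element with $y=f(x)$, subtract $g(x)\in\image g$ to land in $0\oplus P$).

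Now suppose $f$ satisfies $(\t_1)$. By Theorem \ref{T_1} this is equivalent to $\Ker f\cap\Ker\varphi_X=0$, and since $s$ is a left $\proj R$-approximation of $X$ we have $\Ker\varphi_X=\Ker s$ (as observed in the proof of Theorem \ref{T_1}); hence $\Ker f\cap\Ker s=0$ and \eqref{diag1.2} becomes $0\to\Ker f\to P\to\Cokv f\to\Cok f\to0$. It remains to show $\syz\Cokv f\approx\Kerv f$. Rather than compare the two sequences abstractly (which would only present $\Kerv f$ and $\syz\Cokv f$ as possibly different extensions of $\syz\Cok f$ by $\Ker f$), I would exhibit a genuine short exact sequence with projective middle term: the composite $Q\oplus P\xrightarrow{t\oplus 1_P}Y\oplus P\twoheadrightarrow\Cokv f$ is surjective, so its kernel $E=\{(q,p)\in Q\oplus P:(t(q),p)\in\image g\}$ satisfies $E\approx\syz\Cokv f$. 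Then $(x,q)\mapsto(-q,s(x))$ defines a homomorphism $\Kerv f\to E$ (its image lies in $E$ since $(x,q)\in\Kerv f$ forces $t(q)=-f(x)$, whence $(t(-q),s(x))=g(x)\in\image g$), and one checks directly that it is surjective with kernel $\cong\Ker f\cap\Ker s$. Under $(\t_1)$ that kernel is zero, so $\Kerv f\cong E\approx\syz\Cokv f$.

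All of these verifications are routine; the only step requiring a genuine idea is the last one, where the key realization is that $\syz\Cokv f$ should be computed from the specific projective surjection $Q\oplus P\twoheadrightarrow\Cokv f$ above, since it is that kernel which is visibly a quotient of $\Kerv f$. Beyond that, the only real hazard is keeping the signs consistent in the map $\Kerv f\to E$.
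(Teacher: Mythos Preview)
Your argument is correct. For the two four-term exact sequences your explicit maps are exactly the content of the snake lemma applied to the diagrams the paper writes down, so there is no real difference there.

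For the statement $\syz\Cokv f\approx\Kerv f$ your route differs from the paper's. The paper observes that the surjective map $(f,t):X\oplus Q\to Y$ is stably equivalent to $f$ and hence also satisfies $(\t_1)$, so applying the already established exact sequence \eqref{diag1.2} (with $\Cok(f,t)=0$) to $(f,t)$ directly yields a short exact sequence $0\to\Ker(f,t)\to P'\to\Cokv(f,t)\to0$; then one just quotes $\Kerv f\approx\Ker(f,t)$ and $\Cokv(f,t)\approx\Cokv f$. Your approach instead builds a specific projective cover $Q\oplus P\twoheadrightarrow\Cokv f$ and exhibits an explicit isomorphism $\Kerv f\cong E$ onto its kernel. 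The paper's argument is slicker because it recycles the lemma itself, while yours has the advantage of being self-contained and of showing that, even without $(\t_1)$, one always has a short exact sequence $0\to\Ker f\cap\Ker s\to\Kerv f\to\syz\Cokv f\to0$ (for that particular choice of syzygy), which is a little more than the paper states.
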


\begin{proof}
For the convenience of the reader, we give a proof of the lemma.
There are two commutative diagrams
$$
\xymatrix@R-1pc@C-1pc{
&&&0\ar[d]&&0\ar[d]&&&0\ar[d]&&0\ar[d]&&& \\
0\ar[r]&\Ker f\ar[rr]\ar[dd]&&X\ar[rr]^-{\tilde{f}}\ar[dd]^-{\scriptsize \begin{pmatrix} 1 \\ 0\end{pmatrix}}&&\image f\ar[r]\ar[dd]&0 &&\Ker f/W\ar[dd]&&P\ar[dd]^-{\scriptsize \begin{pmatrix} 0 \\ 1\end{pmatrix}}&&&\\
&&&&
&&&&&&\\
0\ar[r]&\Kerv f\ar[rr]&&X\oplus Q\ar[rr]^-{(f, t)}\ar[dd]^-{(0, 1)}&&Y\ar[r]\ar[dd]&0
&0\ar[r]&\image \scriptsize{\begin{pmatrix} f \\ s \end{pmatrix}}\ar[rr]\ar[dd]^-{p}&&Y\oplus P\ar[rr]\ar[dd]^{(1,0)}&&\Cok \scriptsize{\begin{pmatrix} f \\ s \end{pmatrix}}\ar[r]\ar[dd]&0\\
&&&&
&&&&&&\\
&&&Q\ar[rr]\ar[d]&&\Cok f\ar[r]\ar[d]&0 
&0\ar[r]&\image f\ar[rr]\ar[d]&&Y\ar[rr]\ar[d]&&\Cok f\ar[r]\ar[d]&0\\
&&&0&&0& 
&&0&&0&&0&\\
}
$$
with exact rows and columns, where $W=\Ker f\cap\Ker s$ and $p:\image \binom{f}{s} \to \image f$ is the homomorphism defined by $p\scriptsize{\begin{pmatrix}  f(x) \\ s(x)\end{pmatrix}}=f(x)$.
Applying the snake lemma to each, we get the desired exact sequences (\ref{diag1.1}) and (\ref{diag1.2}).

Suppose that $f$ satisfies $(\t_1)$.
Then
\begin{equation}\label{diag1.3}
0\to\Ker f\to P\to\Cokv f\to\Cok f\to0
\end{equation}
is exact by (\ref{diag1.2}) and Theorem \ref{T_1}.
Now let $Q\xrightarrow{t} Y$ be a right $\proj R$-approximation of $Y$.
Applying (\ref{diag1.3}) to $(f, t)$, we obtain an exact sequence $0\to\Ker(f,t)\to P^\prime\to\Cokv (f,t)\to0$ with $P^\prime$ projective.
We get that $\Kerv f\approx\Ker(f,t)\approx\syz\Cokv(f,t)\approx\syz\Cokv f$.
\end{proof}

For an injective or surjective homomorphism $f:X\to Y$ in $\mod R$, the following relationship holds between $f$ being a stable isomorphism and $\tr f$ satisfying $(\t_1)$. 

\begin{lem}\label{stisolem}
Let $M$ be an $R$-module.
\begin{enumerate}[\rm(1)]
    \item
    Let $0\to K\to X\xrightarrow{f}Y\to0$ be an exact sequence.
    Then $f$ is a stable isomorphism if and only if $K$ is projective and $\tr f$ satisfies $(\t_1)$.
    Moreover, when this is the case, $f$ is a split epimorphism.
    \item
    Let $0\to X\xrightarrow{f}Y\to C\to0$ be an exact sequence.
    Then $f$ is a stable isomorphism if and only if $C$ has projective dimension at most one and $\tr f$ satisfies $(\t_1)$.
\end{enumerate}
\end{lem}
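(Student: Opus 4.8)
The plan is to prove part (1) by a direct argument and then deduce part (2) from it.

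For part (1), write $i\colon K\to X$ for the inclusion. First I would dispose of the ``easy'' implications together with the ``moreover'' clause. If $f$ is a stable isomorphism, pick $g\colon Y\to X$ with $\underline{gf}=\underline{1_X}$ and $\underline{fg}=\underline{1_Y}$. From the first relation one gets $f\approx 1_X$, so Remark~\ref{rmkst}(2) gives $K=\Ker f\approx\Kerv f\approx\Kerv 1_X$, and a direct computation shows $\Kerv 1_X$ is projective; hence $K$ is projective. From the second relation $1_Y-fg$ factors through a projective module, and lifting that factorization along the epimorphism $f$ yields a section of $f$, so $f$ is a split epimorphism. Finally, since $\tr\colon\lmod R\to\lmod R^{\mathrm{op}}$ is a functor, $\tr f$ is a stable isomorphism and therefore satisfies $(\t_\infty)$, in particular $(\t_1)$, by Remark~\ref{bydef}(5).

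Conversely, suppose $K$ is projective and $\tr f$ satisfies $(\t_1)$. Since the transpose induces a duality on the stable categories, $\tr\tr f\approx f$, and because $\Ext^1_R(-,R)$ is invariant (up to isomorphism) under $\approx$, unwinding Definition~\ref{Tn} shows that ``$\tr f$ satisfies $(\t_1)$'' is equivalent to ``$\Ext^1_R(f,R)$ is injective''. Now apply $(-)^{\ast}$ to $0\to K\xrightarrow{i}X\xrightarrow{f}Y\to0$; as $\Ext^1_R(K,R)=0$, the long exact sequence shows $\Ext^1_R(f,R)$ is injective if and only if $i^{\ast}\colon X^{\ast}\to K^{\ast}$ is surjective. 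Since $K^{\ast}$ is a projective $R^{\mathrm{op}}$-module such a surjection splits; choosing a splitting $\sigma$ and using that $K$ is reflexive together with the naturality of the biduality map $\varphi$, the composite $\varphi_K^{-1}\circ\sigma^{\ast}\circ\varphi_X\colon X\to K$ is a retraction of $i$. Hence the sequence splits, $X\cong Y\oplus K$ with $K$ projective and $f$ the corresponding projection, which is evidently a stable isomorphism. This last splitting argument is the step I expect to be the main point.

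For part (2), I would reduce everything to part (1). Applying Remark~\ref{rmkst}(3) to $0\to X\xrightarrow{f}Y\to C\to0$ gives an exact sequence $0\to\syz C\to X\oplus P\xrightarrow{\phi}Y\to0$ with $P$ projective and with $\phi=(f,t)$ having first component $f$. Since $P$ is projective, $\underline{\phi}$ is $\underline{f}$ precomposed with a stable isomorphism, so $\phi$ is a stable isomorphism if and only if $f$ is; likewise $\tr\phi\approx\tr f$, so by Remark~\ref{bydef}(1) the map $\tr\phi$ satisfies $(\t_1)$ if and only if $\tr f$ does; and $\syz C$ is projective if and only if $\pd C\le 1$. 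Applying part (1) to the displayed short exact sequence gives that $\phi$ is a stable isomorphism if and only if $\syz C$ is projective and $\tr\phi$ satisfies $(\t_1)$, and translating through the three equivalences above yields the statement of part (2).
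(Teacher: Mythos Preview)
Your argument is correct. Part~(2) is essentially identical to the paper's proof, which likewise reduces to part~(1) via the exact sequence of Remark~\ref{rmkst}(3).

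In part~(1), however, both directions differ in method from the paper. For the forward implication, the paper observes that a stable isomorphism $f$ induces isomorphisms $\Ext^i(f,-)$ for all $i\ge 1$, and then reads off $\Ext^1(K,-)=0$ (hence $K$ projective) and the surjectivity of $\Hom(X,K)\to\Hom(K,K)$ (hence the splitting) directly from the long exact sequence of functors; your route via $f\approx 1_X$ and the computation $\Kerv 1_X\cong P$ is equally valid, though slightly more roundabout for the projectivity, while your lifting argument for the section is essentially the same idea in different clothing. For the converse, the paper invokes \cite[Lemma~3.9]{AB}: from the short exact sequence one gets $0\to\tr Y\xrightarrow{t_f}T_X\to\tr K\to 0$ (the left-hand zero coming from the injectivity of $\Ext^1(f,R)$, which forces $i^\ast$ to be surjective), and since $K$ projective gives $\tr K\approx 0$, the map $t_f\approx\tr f$ is a stable isomorphism, hence so is $f$. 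Your alternative---splitting $i^\ast$ because $K^\ast$ is projective and transporting the splitting back through biduality to retract $i$---is more elementary in that it avoids the Auslander--Bridger transpose sequence entirely, at the cost of a small computation with $\varphi_K$. One minor remark: the hypothesis $\Ext^1(K,R)=0$ is not actually needed to show ``$\Ext^1(f,R)$ injective $\Leftrightarrow i^\ast$ surjective''; that equivalence holds straight from exactness at $K^\ast$.
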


\begin{proof}
(1) Suppose that $f$ is a stable isomorphism.
Then $\tr f$ is also a stable isomorphism, and so satisfies $(\t_1)$.
Furthermore, the exact sequence $0\to K\to X\xrightarrow{f}Y\to0$ induces an exact sequence of functors
\begin{equation}\label{stisolem1.1}
\xymatrix@R-1pc@C-1pc{
\Ext^1(Y,-)\ar[rr]^-{\Ext^1(f,-)}&&\Ext^1(X,-)\ar[r]&\Ext^1(K,-)\ar[r]&\Ext^2(Y,-)\ar[rr]^-{\Ext^2(f,-)}&&\Ext^2(X,-)
}.
\end{equation}
Since the natural transformation $\Ext^i(f,-)$ is an isomorphism for all $i>0$, one has $\Ext^1(K,-)=0$, that is, $K$ is projective.
Also, since $0\to\Hom(Y,K)\to\Hom(X,K)\to\Hom(K,K)\to0$ is exact by (\ref{stisolem1.1}), $f$ is a split epimorphism.
Conversely, assume that $K$ is projective and $\tr f$ satisfies $(\t_1)$.
The latter means that $\Ext^1(f,R)$ is injective.
We obtain an exact sequence $0\to\tr Y\xrightarrow{t_f}T_X\to\tr K\to0$, where $T_X\approx\tr X$ and $t_f\approx\tr f$ by \cite[Lemma 3.9]{AB}.
As $\tr K\approx0$, $t_f$ is a stable isomorphism, and so is $f$.

(2) By Remark \ref{rmkst}(3), there exists an exact sequence $0\to\syz C\to X\oplus P\xrightarrow{(f, t)}Y\to0$ with $P$ projective. Thus the assertion holds by (1).
\end{proof}

Let us consider the condition $(\t_n)$ in the case where $n$ is arbitrary.
Let $M$ be an $R$-module.
The {\em grade} of $M$, which is denoted by $\grade_R M$, is defined to be the infimum of integers $i$ such that $\Ext_R^i(M,R)=0$.
The relationship between the grades of Ext modules and the torsionfreeness of modules has been actively studied; the works of Auslander and Bridger \cite{AB} and Auslander and Reiten \cite{AR} are among the most celebrated studies.
The following theorem is one of the main results of this paper, which interprets the condition $(\t_n)$ in terms of grades and torsionfreeness.
In Section 4, using the following theorem, we recover the results of Auslander and Bridger \cite{AB} and Auslander and Reiten \cite{AR}.

\begin{thm}\label{main2}
Let $n\ge1$ be an integer. Consider the following conditions for a homomorphism $f:X\to Y$ of $R$-modules.
\begin{itemize}
    \item[$(a_n)$] The homomorphism $f$ satisfies $(\t_n)$.
    \item[$(b_n)$] The $R$-module $\Kerv f$ is $n$-torsionfree.
    \item[$(c_n)$] There is an inequality $\grade_{R^{\mathrm{op}}}\Ker\Ext_R^1(f,R)\ge n$.
\end{itemize}
Then the following implications hold.
$$
(a_n)\wedge(b_{n+1})\Longrightarrow(c_n),\qquad 
(b_n)\wedge(c_n)\Longrightarrow(a_n),\qquad
(a_n)\wedge(c_{n-1})\Longrightarrow(b_n).
$$
\end{thm}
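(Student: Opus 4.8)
The plan is to reduce the whole statement to one about a single module, obtained by passing to $\Cokv f$, and then to settle that module statement by a long exact sequence chase; the one extra point is to produce the condition $(\t_1)$ in the implication $(b_n)\wedge(c_n)\Rightarrow(a_n)$ before the reduction applies. Fix a left $\proj R$-approximation $s\colon X\to P$ and a right $\proj R$-approximation $t\colon Q\to Y$, so $\Kerv f=\Ker((f,t)\colon X\oplus Q\to Y)$ and $\Cokv f=\Cok(\binom{f}{s}\colon X\to Y\oplus P)$. \emph{Step 1 (reduction under $(\t_1)$).} Suppose $f$ satisfies $(\t_1)$ and put $C=\Cokv f$. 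By Theorem \ref{T_1} the map $\binom{f}{s}$ is injective, so $0\to X\xrightarrow{\binom{f}{s}}Y\oplus P\to C\to 0$ is exact. Applying $\Hom_R(-,R)$ and using that $s^\ast$ is surjective shows $\Ext_R^1(C,R)\cong\Ker\Ext_R^1(f,R)$, so $(c_n)$ becomes ``$\grade_{R^{\mathrm{op}}}\Ext_R^1(C,R)\ge n$''; and by Lemma \ref{diag}, $\Kerv f\approx\syz C$, so $(b_n)$ becomes ``$\syz C$ is $n$-torsionfree''. Applying \cite[Lemma 3.9]{AB} to the same exact sequence, and using $\tr P\approx 0$ together with the surjectivity of $s^\ast$, yields an exact sequence $0\to\tr C\to\tr Y\to\tr X\to 0$ whose map $\tr Y\to\tr X$ is $\approx\tr f$; in the associated $\Ext_{R^{\mathrm{op}}}(-,R)$-long exact sequence the connecting map out of $\Hom_{R^{\mathrm{op}}}(\tr C,R)$ has image $\Ker\Ext^1_{R^{\mathrm{op}}}(\tr f,R)$, hence vanishes because $(\t_1)$ holds, and a short chase then shows that $f$ satisfies $(\t_n)$ if and only if $C$ is $(n-1)$-torsionfree. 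Thus, under $(\t_1)$, the conditions $(a_n),(b_n),(c_n)$ are equivalent, respectively, to $(\mathrm A_n)$: ``$C$ is $(n-1)$-torsionfree'', $(\mathrm B_n)$: ``$\syz C$ is $n$-torsionfree'', and $(\mathrm C_n)$: ``$\grade_{R^{\mathrm{op}}}\Ext_R^1(C,R)\ge n$''.

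\emph{Step 2 (the module statement).} For an arbitrary module $C$ with $L=\syz C$, fix $0\to L\to P_0\to C\to 0$ with $P_0$ projective. Dualizing a projective presentation of $C$ gives a four-term exact sequence $0\to\Ext_R^1(C,R)\to\tr C\to P_2^\ast\to\tr L\to 0$, hence a short exact sequence $0\to E\to\tr C\to M\to 0$ with $E=\Ext_R^1(C,R)$ and $M=\syz\tr L$. Since $L$ is torsionless, $\Ext^1_{R^{\mathrm{op}}}(\tr L,R)=0$, so dimension shifting along $0\to M\to P_2^\ast\to\tr L\to 0$ gives $\Ext^i_{R^{\mathrm{op}}}(M,R)\cong\Ext^{i+1}_{R^{\mathrm{op}}}(\tr L,R)$ for $i\ge 1$, whence $(\mathrm B_n)$ is equivalent to $\Ext^i_{R^{\mathrm{op}}}(M,R)=0$ for $1\le i\le n-1$, and $(\mathrm B_2)$ is equivalent to $L$ being reflexive. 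Also, dualizing $0\to I\to L^\ast\to E\to 0$, where $I\subseteq L^\ast$ consists of those functionals that extend to $P_0$, the composite $L\xrightarrow{\varphi_L}L^{\ast\ast}\to I^\ast$ is a monomorphism (a functional extending to $P_0$ and vanishing at $x\in L$ forces every element of $P_0^\ast$ to vanish at $x$, so $x=0$), so $\Ker(L^{\ast\ast}\to I^\ast)=0$, i.e. $E^\ast=0$, as soon as $\varphi_L$ is surjective, in particular whenever $L$ is reflexive. Now feeding $(\mathrm A_n),(\mathrm B_n),(\mathrm C_n)$ into the $\Ext_{R^{\mathrm{op}}}(-,R)$-long exact sequence of $0\to E\to\tr C\to M\to 0$ yields the three implications: the degree-$\ge 1$ parts are immediate from exactness and the given vanishings, while the only boundary inputs needed — $E^\ast=0$ in $(\mathrm A_n)\wedge(\mathrm B_{n+1})\Rightarrow(\mathrm C_n)$, and $E^\ast=0$ (used to force $\Ext^1_{R^{\mathrm{op}}}(M,R)=0$) in $(\mathrm A_n)\wedge(\mathrm C_{n-1})\Rightarrow(\mathrm B_n)$ — are supplied by $(\mathrm B_{n+1})$ via the reflexivity remark in the first case and are contained in $(\mathrm C_{n-1})$ in the second.

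\emph{Step 3 (supplying $(\t_1)$) and assembly.} Since $(b_n)\Rightarrow(b_1)$ and $(c_n)\Rightarrow(c_1)$, it suffices to show that $(b_1)\wedge(c_1)$ forces $f$ to satisfy $(\t_1)$. From $0\to\Kerv f\xrightarrow{\iota}X\oplus Q\to Y\to 0$ one reads off $\Ker\Ext_R^1(f,R)\cong\Cok\iota^\ast$; dualizing $0\to\image\iota^\ast\to(\Kerv f)^\ast\to\Ker\Ext^1_R(f,R)\to 0$ identifies $(\Ker\Ext_R^1(f,R))^\ast$ with $\Ker\big((\Kerv f)^{\ast\ast}\to(\image\iota^\ast)^\ast\big)$. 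The submodule $\Ker f\cap\Ker\varphi_X$, embedded in $\Kerv f$ by $x\mapsto(x,0)$, is annihilated by every functional on $\Kerv f$ that extends to $X\oplus Q$ (its $X$-component kills $\Ker\varphi_X$), so $\varphi_{\Kerv f}$ carries it into $(\Ker\Ext_R^1(f,R))^\ast$; by $(c_1)$ that module is $0$, so $\Ker f\cap\Ker\varphi_X\subseteq\Ker\varphi_{\Kerv f}=0$ by $(b_1)$, and $f$ satisfies $(\t_1)$ by Theorem \ref{T_1}. Finally: in $(a_n)\wedge(b_{n+1})\Rightarrow(c_n)$ and $(a_n)\wedge(c_{n-1})\Rightarrow(b_n)$ the hypothesis $(a_n)$ already implies $(\t_1)$, so Steps 1--2 conclude; in $(b_n)\wedge(c_n)\Rightarrow(a_n)$, Step 3 supplies $(\t_1)$ and then Steps 1--2 conclude.

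The step I expect to be the main obstacle is Step 1, and within it the equivalence ``$f$ satisfies $(\t_n)$ $\iff$ $\Cokv f$ is $(n-1)$-torsionfree''. Its delicate point is the vanishing, under $(\t_1)$, of the connecting homomorphism $\Hom_{R^{\mathrm{op}}}(\tr\Cokv f,R)\to\Ext^1_{R^{\mathrm{op}}}(\tr X,R)$, together with the need to keep the ``up to projective summands'' ambiguity of $\syz$, $\tr$, $\Kerv f$ and $\Cokv f$ under control throughout; once this equivalence and the two translations of $(b_n)$ and $(c_n)$ are in place, Step 2 is a routine, if bookkeeping-heavy, diagram chase.
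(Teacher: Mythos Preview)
Your proof is correct, but it takes a genuinely different route from the paper's. The paper replaces $f$ by the epimorphism $(f,s)\colon X\oplus P\to Y$, sets $K=\Ker f$, and from \cite[Lemma~3.9]{AB} obtains the exact sequence $K^\ast\xrightarrow{a}\tr Y\xrightarrow{t_f}T_X\to\tr K\to 0$; factoring $t_f$ through $V=\Cok a$ produces two short exact sequences whose $\Ext_{R^{\mathrm{op}}}(-,R)$-long exact sequences encode $(c_n)$ (via the map $b\colon\tr Y\to V$) and $(b_n)$ (via the map $c\colon V\to T_X$), and the triangle $\Ext^i(t_f,R)=\Ext^i(b,R)\circ\Ext^i(c,R)$ then yields all three implications at once. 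You instead pass to the monomorphism $\binom{f}{s}$ after first securing $(\t_1)$, reduce everything to intrinsic conditions on the single module $C=\Cokv f$ --- namely $(\mathrm A_n)$: $C$ is $(n-1)$-torsionfree, $(\mathrm B_n)$: $\syz C$ is $n$-torsionfree, $(\mathrm C_n)$: $\grade\Ext^1(C,R)\ge n$ --- and then prove the module statement via the sequence $0\to\Ext^1(C,R)\to\tr C\to\syz\tr\syz C\to 0$. The paper's approach is more uniform: it never needs to treat $(\t_1)$ separately, and the factorisation through $V$ handles all three implications symmetrically with no boundary analysis of $E^\ast$. Your approach costs the extra Step~3 and the reflexivity argument for $E^\ast=0$, but it buys a clean standalone module-theoretic statement (the implications among $(\mathrm A_n),(\mathrm B_n),(\mathrm C_n)$ for an arbitrary $C$), and your Step~1 is essentially an independent proof of the equivalence $(1)\Leftrightarrow(2)$ in the paper's Theorem~\ref{main1}.
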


\begin{proof}
Let $s:P\to Y$ be an epimorphism with $P$ projective. By replacing $f$ with $(f, s)$, we may assume that $f$ is surjective, and that $\Ker f\approx\Kerv f$.
Then, by \cite[Lemma 3.9]{AB}, there exists a commutative diagram with exact rows
$$
\xymatrix@R-1pc@C-1pc{
0\ar[rr]&&Y^\ast\ar[rr]\ar@{=}[dd]&&X^\ast\ar[rr]\ar@{=}[dd]&&K^\ast\ar[rr]\ar@{=}[dd]&&\Ext^1(Y,R)\ar[rr]^-{\Ext^1(f,R)}&&\Ext^1(X,R)  \\
&&&&&&&&&&& \\
0\ar[rr]&&Y^\ast\ar[rr]&&X^\ast\ar[rr]&&K^\ast\ar[rr]^{a}&&\tr Y\ar[rr]^{t_f}&&T_X\ar[rr]&&\tr K\ar[rr]&&0,
}
$$
where $K=\Ker f$, such that $T_X\approx\tr X$ and $t_f\approx\tr f$.
Moreover, the sequence $0\to(\tr K)^\ast\to(T_X)^\ast\xrightarrow{(t_f)^\ast}(\tr Y)^\ast\to0$ is also exact. 
Let $U=\Ker\Ext^1(f,R)$ and $V=\Cok a$.
We decompose the exact sequence $K^\ast\xrightarrow{a}\tr Y\xrightarrow{t_f}T_X\to\tr K\to0$ into short exact sequences
$$
0\to U\to\tr Y\xrightarrow{b}V\to0,\qquad 0\to V\xrightarrow{c}T_X\to\tr K\to0.
$$
As the homomorphisms $b^\ast$ and $c^\ast$ are surjective, we get two long exact sequences
\begin{equation}\label{1.1}
\xymatrix@R-1pc@C-1pc{
    &0\ar[r]&U^\ast\ar[r]&\Ext^1(V,R)\ar[rr]^-{\Ext^1(b,R)}&&\Ext^1(\tr Y,R)\ar[rr]&&\Ext^1(U,R)\ar[rr]&&\cdots\\
}
\end{equation}
and
\begin{equation}\label{1.2}
\xymatrix@R-1pc@C-1pc{
    0\ar[rr]&&\Ext^1(\tr K,R)\ar[rr]&&\Ext^1(T_X,R)\ar[rr]^-{\Ext^1(c,R)}&&\Ext^1(V,R)\ar[rr]&&\cdots\\
}.
\end{equation}
The long exact sequence (\ref{1.1}) indicates that the condition $(c_n)$ holds if and only if $\Ext^i(b,R)$ is bijective for all $1\le i\le n-1$ and $\Ext^n(b,R)$ is injective, while (\ref{1.2}) shows that the condition $(b_n)$ holds if and only if $\Ext^i(c,R)$ is bijective for all $1\le i\le n-1$ and $\Ext^n(c,R)$ is injective.
Note that we have the following commutative diagram for any integer $i$.
\begin{equation}\label{1.3}
\xymatrix@R-1pc@C-1pc{
\Ext^i(T_X,R)\ar[rrrr]^-{\Ext^i(t_f,R)}\ar[rrd]_-{\Ext^i(c,R)}&&&&\Ext^i(\tr Y,R)\\
&&\Ext^i(V,R)\ar[rru]_-{\Ext^i(b,R)}&&&
}
\end{equation}
In particular, $\Ext^i(c,R)$ is injective for all $1\le i\le n$ and $\Ext^j(b,R)$ is surjective for all $1\le j\le n-1$ if the condition $(a_n)$ holds.

Suppose that the condition $(b_n)\wedge(c_n)$ holds.
Then $\Ext^i(c,R)$ and $\Ext^i(b,R)$ are bijective for all $1\le i\le n-1$, moreover, $\Ext^n(b,R)$ and $\Ext^n(c,R)$ are injective.
Thus $\Ext^i(\tr f,R)$ is bijective for all $1\le i\le n-1$ and $\Ext^n(\tr f,R)$ is injective by (\ref{1.3}), that is, the condition $(a_n)$ holds.

Next, we assume that the condition $(a_n)\wedge(c_{n-1})$ holds.
Then $\Ext^i(b,R)$ is bijective for all $1\le i\le n-1$, and $\Ext^j(c,R)$ is injective for all $1\le j\le n$.
For all $1\le i\le n-1$, the map $\Ext^i(c,R)$ is bijective by (\ref{1.3}) since $\Ext^i(\tr f,R)$ is bijective.
As $\Ext^i(c,R)$ is bijective for all $1\le i\le n-1$ and $\Ext^n(c,R)$ is injective, the condition $(b_n)$ holds.

Finally, we prove that $(a_n)\wedge(b_{n+1})$ implies $(c_n)$. The condition $(a_n)$ deduces that $\Ext^i(b,R)$ is surjective for all $1\le i\le n-1$, and $(b_{n+1})$ yields that $\Ext^j(c,R)$ is bijective for all $1\le j\le n$. 
Thus, if the condition $(a_n)\wedge(b_{n+1})$ holds, then $\Ext^i(b,R)$ is bijective for all $1\le i\le n-1$ and $\Ext^n(b,R)$ is injective since $\Ext^j(\tr f,R)$ is bijective for all $1\le j\le n-1$ and $\Ext^n(\tr f,R)$ is injective. 
Therefore, the condition $(c_n)$ holds, and the proof of the theorem is completed.
\end{proof}

\begin{rmk}\label{surj}
For Theorem \ref{main2}, we also consider the condition $$(b^\prime_n)\mbox{ : }\Ker f\mbox{ is } n\mbox{-torsionfree}.$$
We note the following.
\begin{enumerate}[\rm(1)]
    \item
    From (\ref{diag1.1}), the implication $(b_1)\Rightarrow(b^\prime_1)$ holds.
    If $\tf_1(R)$ is closed under extensions, then the opposite implication $(b^\prime_1)\Rightarrow(b_1)$ holds.
    \item
    Let $n\ge1$ be an integer.
    Suppose that $f$ is surjective.
    \begin{itemize}
        \item[(2-a)]
        By Remark \ref{rmkst}(2), the conditions $(b_n)$ and $(b^\prime_n)$ are equivalent.
        \item[(2-b)]
        Let $\iota:\Ker f\to X$ be the inclusion map.
        If the $R$-dual map $\iota^\ast$ is surjective, then the conditions $(a_n)$ and $(b^\prime_n)$ are equivalent.
    \end{itemize}
\end{enumerate}
\end{rmk}

The following corollary is none other than \cite[Theorem 4.2]{KK}, which gives a simple characterization of the morphisms represented by monomorphisms when $R$ is commutative and generically Gorenstein (e.g., when $R$ is an integral domain).
We can deduce it from Theorem \ref{main2} and \cite[Proposition 4.21]{AB}.

\begin{cor}[Kato]\label{KKmain}
Suppose that $R$ is commutative and satisfies $(\s_1)$ and $(\g_0)$.
Let $f:X\to Y$ be a homomorphism of $R$-modules.
Then $f$ satisfies $(\t_1)$ if and only if $\Ker f$ is torsionless, if and only if $\Kerv f$ is torsionless.
\end{cor}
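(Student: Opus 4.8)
My plan is to apply Theorem~\ref{main2} with $n=1$. There the three conditions read: $(a_1)$ ``$f$ satisfies $(\t_1)$''; $(b_1)$ ``$\Kerv f$ is torsionless'' (recall torsionless $=$ $1$-torsionfree by Remark~\ref{tfrmk}(1)); and $(c_1)$ ``$\grade_R\Ker\Ext_R^1(f,R)\ge1$''. Since every module has nonnegative grade, $(c_0)$ is automatic, so the implication $(a_1)\wedge(c_0)\Rightarrow(b_1)$ already shows, with no hypothesis on $R$, that $(\t_1)$ forces $\Kerv f$ to be torsionless. Hence the substance of the corollary is two things: (i) that $(c_1)$ holds under $(\s_1)$ and $(\g_0)$, which via $(b_1)\wedge(c_1)\Rightarrow(a_1)$ upgrades the above to the equivalence $(a_1)\Leftrightarrow(b_1)$; and (ii) that torsionlessness of $\Kerv f$ is equivalent to torsionlessness of $\Ker f$.

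For (i), I would first note that $(\s_1)$ forces $\ass R=\operatorname{Min}R$, so every $\p\in\ass R$ has height $0$ and, by $(\g_0)$, the ring $R_\p$ is Artinian and Gorenstein, hence self-injective. Then $\Ext_{R_\p}^1(Y_\p,R_\p)=0$, so $\Ext_R^1(Y,R)$ and a fortiori its submodule $\Ker\Ext_R^1(f,R)$ vanish after localization at every associated prime of $R$; equivalently $\grade_R\Ker\Ext_R^1(f,R)\ge1$, i.e.\ $(c_1)$ holds. This is the content I extract from \cite[Proposition 4.21]{AB}. Now Theorem~\ref{main2} gives $(a_1)\Leftrightarrow(b_1)$: the map $f$ satisfies $(\t_1)$ if and only if $\Kerv f$ is torsionless.

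For (ii), Remark~\ref{surj}(1)—built on the exact sequence (\ref{diag1.1}), from which $\Kerv f/\Ker f$ embeds in a projective module and is therefore torsionless—gives ``$\Kerv f$ torsionless $\Rightarrow$ $\Ker f$ torsionless'' unconditionally, and yields the converse once $\tf_1(R)$ is closed under extensions. To check the latter under $(\s_1)$ and $(\g_0)$, I would show that a finitely generated $R$-module $M$ is torsionless exactly when $\ass M\subseteq\operatorname{Min}R$: the forward direction follows from an embedding $M\hookrightarrow R^n$; conversely $\Ker\varphi_M$ localizes to $\Ker\varphi_{M_\p}$, which vanishes for $\p\in\ass R$ since $R_\p$ is self-injective and hence $M_\p$ is reflexive, while $\ass(\Ker\varphi_M)\subseteq\ass M$, so $\Ker\varphi_M=0$. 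As ``$\ass M\subseteq\operatorname{Min}R$'' is plainly stable under extensions, so is $\tf_1(R)$—again this is \cite[Proposition 4.21]{AB}. Chaining (i) and (ii) gives the three stated equivalences; alternatively one could replace the last step of (ii) by a direct argument that $\Ker f$ torsionless implies $\Ker f\cap\Ker\varphi_X=0$ (self-injectivity of $R_\p$ at $\p\in\ass R$ again) and invoke Theorem~\ref{T_1}.

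The step I expect to be the main obstacle is the bookkeeping around $(c_1)$: pinning the hypotheses $(\s_1)$ and $(\g_0)$ to the vanishing of $\Ext_{R_\p}^1(-,R_\p)$ at associated primes and squaring this with the precise statement of \cite[Proposition 4.21]{AB}. The remaining ingredients—the two applications of Theorem~\ref{main2}, Remark~\ref{surj}(1), and the elementary facts about torsionless modules and associated primes—are routine.
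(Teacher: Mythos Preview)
Your proposal is correct and follows essentially the same route as the paper: both use Theorem~\ref{main2} at $n=1$ together with Remark~\ref{surj}(1), drawing on the hypotheses $(\s_1)$ and $(\g_0)$ to secure $(c_1)$ (via \cite[Proposition~4.21]{AB}) and the extension closedness of $\tf_1(R)$. The only differences are cosmetic: the paper cites \cite[Theorem~2.3]{MTT} for extension closedness where you argue it directly via the characterization $\ass M\subseteq\operatorname{Min}R$, and the paper leaves the implication $(a_1)\Rightarrow(b_1)$ (from the automatic $(c_0)$) implicit, whereas you spell it out.
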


\begin{proof}
Since $R$ satisfies $(\s_1)$ and $(\g_0)$, $\tf_1(R)$ is closed under extensions; see \cite[Theorem 2.3]{MTT} for instance.
Hence, by Remark \ref{surj}(1), $\Ker f$ is torsionless if and only if so is $\Kerv f$.
Suppose that $\Kerv f$ is torsionless.
By \cite[Proposition 4.21]{AB}, we have $\grade\Ker\Ext^1(f,R)\ge\grade\Ext^1(Y,R)\ge1$.
It follows from Theorem \ref{main2} that $f$ satisfies $(\t_1)$.
\end{proof}

The {\em strong grade} of an $R$-module $M$, denoted by $\sgrade_R M$, is defined to be the supremum of integers $i$ such that $\grade_R N\ge i$ for all submodules $N$ of $M$.
The following theorem gives a detailed characterization of those homomorphisms which satisfy the condition $(\t_n)$ and can also be regarded as a generalization of Theorem \ref{T_1}. 

\begin{thm}\label{main1}
Let $n\ge1$ be an integer. Consider the following conditions for a homomorphism $f:X\to Y$ of $R$-modules.
\begin{enumerate}[\rm(1)]
\item
The homomorphism $f$ satisfies $(\t_n)$.
\item
The homomorphism $f$ satisfies $(\t_1)$ and $\Cokv f$ is $(n-1)$-torsionfree.
\item
There exists an exact sequence $0\to X\xrightarrow{\binom{f}{t}}Y\oplus P\to Z\to0$
such that $P$ is projective, $Z$ is $(n-1)$-torsionfree and $\binom{f}{t}^\ast$ is surjective.
\item
There exists an exact sequence $0\to X\xrightarrow{\binom{f}{t}}Y\oplus P\to Z\to0$
such that $P$ is projective and $Z$ is $(n-1)$-torsionfree.
\item
There exists an exact sequence $0\to X^\prime\xrightarrow{f^\prime}Y^\prime\to Z^\prime\to0$
such that $f\approx f^\prime$, $Z^\prime$ is $(n-1)$-torsionfree and $f^{\prime\ast}$ is surjective.
\item
There exists an exact sequence $0\to X^\prime\xrightarrow{f^\prime}Y^\prime\to Z^\prime\to0$
such that $f\approx f^\prime$ and $Z^\prime$ is $(n-1)$-torsionfree.
\end{enumerate}
Then the implications 
$$(1)\Longleftrightarrow(2)\Longleftrightarrow(3)\Longleftrightarrow(5)\Longrightarrow(4)\Longrightarrow(6)$$
hold.
The implication $(6)\Rightarrow(1)$ holds as well, if one requires the module $Z^\prime$ in $(6)$ to satisfy the inequality $\sgrade_{R^{\mathrm{op}}} \Ext_R^1(Z^\prime,R)\ge n-1$. 
In particular, the implication $(6)\Rightarrow(1)$ holds if $\tf_{n-1}(R)$ is closed under extensions.
Conversely, if the implication $(6)\Rightarrow(1)$ holds for every homomorphism $f:X\to Y$ with $X$ $(n-1)$-torsionfree, then $\tf_{n-1}(R)$ is closed under extensions.
\end{thm}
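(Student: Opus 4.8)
The plan is to establish the cycle $(1)\Leftrightarrow(2)\Leftrightarrow(3)\Leftrightarrow(5)$, to note the easy implications $(3)\Rightarrow(4)\Rightarrow(6)$ and $(5)\Rightarrow(6)$, and then to analyse $(6)\Rightarrow(1)$ and its converse separately. The main tool is the exact sequence of \cite[Lemma 3.9]{AB}: to each short exact sequence $0\to A\to B\to C\to0$ in $\mod R$ it attaches an exact sequence
\[
0\to C^\ast\to B^\ast\xrightarrow{\nu}A^\ast\xrightarrow{a}\tr C\xrightarrow{\beta}\tr B\to\tr A\to0
\]
in $\mod R^{\mathrm{op}}$ (the last arrow being $\tr$ of $A\to B$), in which $\image a\cong\Cok\nu$; applying $\Hom_R(-,R)$ to $0\to A\to B\to C\to0$ moreover exhibits $\Cok\nu$ as a submodule of $\Ext_R^1(C,R)$. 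For $(1)\Leftrightarrow(2)$, assume $f$ satisfies $(\t_1)$ and choose a left $\proj R$-approximation $s\colon X\to P$; by Theorem \ref{T_1} the map $\binom{f}{s}\colon X\to Y\oplus P$ is then a monomorphism, its $R$-dual is surjective (since $s^\ast$ is), and its cokernel is $\Cokv f$. Feeding $0\to X\xrightarrow{\binom{f}{s}}Y\oplus P\to\Cokv f\to0$ into the sequence above forces $a=0$ and yields, up to projective summands, a short exact sequence $0\to\tr\Cokv f\to\tr Y\xrightarrow{\tr f}\tr X\to0$. Running $\Ext_{R^{\mathrm{op}}}^\bullet(-,R)$ through this and using injectivity of $\Ext_{R^{\mathrm{op}}}^1(\tr f,R)$ (i.e.\ $(\t_1)$) at degree one, a routine diagram chase shows that $\Cokv f$ is $(n-1)$-torsionfree if and only if $f$ satisfies $(\t_n)$; since $(\t_n)$ implies $(\t_1)$, this establishes $(1)\Leftrightarrow(2)$.

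For the remaining equivalences: $(2)\Rightarrow(3)$ is the construction just made, with $t=s$ and $Z=\Cokv f$; $(3)\Rightarrow(5)$ is immediate on setting $f'=\binom{f}{t}$, as $\binom{f}{t}\approx f$ (the $P$-component factoring through a projective); and $(5)\Rightarrow(1)$ holds because an injective $f'$ is trivially represented by monomorphisms, hence satisfies $(\t_1)$, so applying the sequence of \cite[Lemma 3.9]{AB} to $0\to X'\xrightarrow{f'}Y'\to Z'\to0$ — where $f^{\prime\ast}$ is again surjective — and repeating the chase with $Z'$ in place of $\Cokv f$ shows $f'$, hence $f$, satisfies $(\t_n)$. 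This closes the cycle, and $(3)\Rightarrow(4)\Rightarrow(6)$ together with $(5)\Rightarrow(6)$ follow by discarding the surjectivity hypothesis on the relevant dual, respectively by taking $f'=\binom{f}{t}$.

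The implication $(6)\Rightarrow(1)$ under the hypothesis $\sgrade_{R^{\mathrm{op}}}\Ext_R^1(Z',R)\ge n-1$ carries the real content, since $f^{\prime\ast}$ need no longer be surjective. An injective $f'$ still satisfies $(\t_1)$; applying the sequence of \cite[Lemma 3.9]{AB} to $0\to X'\xrightarrow{f'}Y'\to Z'\to0$ and writing $B'=\image\beta=\Ker(\tr f')$ produces the two short exact sequences $0\to\image a\to\tr Z'\to B'\to0$ and $0\to B'\to\tr Y'\xrightarrow{\tr f'}\tr X'\to0$, where $\image a\cong\Cok f^{\prime\ast}$ is, as above, a submodule of $\Ext_R^1(Z',R)$, so $\grade_{R^{\mathrm{op}}}\image a\ge n-1$. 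Feeding this and the $(n-1)$-torsionfreeness of $Z'$ into the long exact $\Ext_{R^{\mathrm{op}}}^\bullet(-,R)$-sequence of the first gives $\Ext_{R^{\mathrm{op}}}^i(B',R)=0$ for $1\le i\le n-1$; the long exact $\Ext_{R^{\mathrm{op}}}^\bullet(-,R)$-sequence of the second, combined with $\Ext_{R^{\mathrm{op}}}^1(\tr f',R)$ being injective (from $(\t_1)$), then gives $(\t_n)$ for $f'$, hence for $f$. For the ``in particular'' clause, assume $\tf_{n-1}(R)$ is closed under extensions. Since $f\approx f'$ with $f'$ injective, $f$ satisfies $(\t_1)$, so $\Cokv f$ is defined and $\Cokv f\approx\Cokv f'$ by Remark \ref{rmkst}(2); comparing $0\to X'\xrightarrow{\binom{f'}{s'}}Y'\oplus P'\to\Cokv f'\to0$ (for $s'\colon X'\to P'$ a left $\proj R$-approximation) with $0\to X'\xrightarrow{f'}Y'\to Z'\to0$ via the snake lemma produces an exact sequence $0\to P'\to\Cokv f'\to Z'\to0$ with $P'$ projective, so $\Cokv f'$, an extension of the $(n-1)$-torsionfree modules $Z'$ and $P'$, is $(n-1)$-torsionfree; hence so is $\Cokv f$, and $(2)\Rightarrow(1)$ applies.

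For the converse, suppose $(6)\Rightarrow(1)$ holds for every homomorphism with $(n-1)$-torsionfree source (the case $n=1$ being trivial), and let $0\to A\xrightarrow{\iota}B\to C\to0$ be exact with $A,C\in\tf_{n-1}(R)$. This sequence witnesses condition $(6)$ for $f=\iota$ with $Z'=C$, and the source $A$ of $\iota$ is $(n-1)$-torsionfree, so $\iota$ satisfies $(\t_n)$; then Remark \ref{bydef}(4), applied with $n-1$ in place of $n$, gives from $(a_n)\wedge(b_{n-1})$ that $(c_{n-1})$ holds, i.e.\ $B\in\tf_{n-1}(R)$, so $\tf_{n-1}(R)$ is closed under extensions. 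I expect the main obstacle to be the bookkeeping inside $(6)\Rightarrow(1)$: identifying the defect $\Cok f^{\prime\ast}$ simultaneously as the submodule $\image a$ of $\tr Z'$ and as a submodule of $\Ext_R^1(Z',R)$ — which forces one to use the commutative-diagram refinement of \cite[Lemma 3.9]{AB}, comparing its transpose sequence with the $\Hom_R(-,R)$-sequence of the same short exact sequence — together with tracking precisely which $\Ext$-groups vanish in which degrees across the two successive long exact sequences; the single point these grade and torsionfreeness hypotheses do not control is the degree-one injectivity, which is instead always supplied by the elementary fact that an injective homomorphism is represented by monomorphisms.
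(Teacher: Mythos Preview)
Your proof is correct. The treatment of the cycle $(1)\Leftrightarrow(2)\Leftrightarrow(3)\Leftrightarrow(5)$, the trivial implications, and the final converse all match the paper's argument essentially verbatim (the paper packages your $(1)\Leftrightarrow(2)$ chase as a standalone Claim, but the content is identical).

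The genuine divergence is in $(6)\Rightarrow(1)$ under the $\sgrade$ hypothesis. The paper takes a left $\proj R$-approximation $t\colon X'\to P$, forms the pushout $W$ of $f'$ along $t$, and invokes \cite[Theorem 1.1]{AR} to conclude from $0\to P\to W\to Z'\to0$ that $W$ is $(n-1)$-torsionfree; since $W=\Cok\binom{f'}{t}$ and $\binom{f'}{t}^\ast$ is surjective, this reduces directly to condition~(3). Your argument instead unpacks the \cite[Lemma 3.9]{AB} sequence for $f'$ itself, isolates the defect $\image a\cong\Cok f'^\ast\hookrightarrow\Ext^1_R(Z',R)$, and runs two long exact sequences by hand. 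In effect you are proving inline the piece of \cite[Theorem 1.1]{AR} that the paper imports as a black box, which makes your argument more self-contained at the cost of an extra page of $\Ext$-bookkeeping. Your separate snake-lemma treatment of the ``in particular'' clause is then forced on you (since you are not appealing to \cite{AR}), whereas the paper's pushout construction handles the $\sgrade$ case and the extension-closed case uniformly: in the latter, $W$ is $(n-1)$-torsionfree simply because it sits between $P$ and $Z'$. Both routes are clean; the paper's is shorter but less elementary.
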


\begin{proof}
We first prove the following claim.

\begin{claim}
Let $0\to X^\prime\xrightarrow{f^\prime}Y^\prime\to Z^\prime\to0$ be an exact sequence with $f^{\prime\ast}$ surjective.
Then $f^\prime$ satisfies $(\t_n)$ if and only if $Z^\prime$ is $(n-1)$-torsionfree.
\end{claim}

\begin{proof}[Proof of Claim]
By \cite[Lemma 3.9]{AB}, we get an exact sequence 
$0\to\tr Z^\prime\to T_{Y^\prime}\xrightarrow{t_{f^\prime}}\tr X^\prime\to0$, where $T_{Y^\prime}\approx\tr Y^\prime$ and $t_{f^\prime}\approx\tr f^\prime$, and whose $R$-dual $0\to(\tr X^\prime)^\ast\to(T_{Y^\prime})^\ast\to(\tr Z^\prime)^\ast\to0$ is also exact.
Therefore, the long exact sequence
$$
\cdots\to\Ext^{i-1}(\tr Z^\prime,R)\to\Ext^i(\tr X^\prime,R)\xrightarrow{\Ext^i(\tr f^\prime,R)}\Ext^i(\tr Y^\prime,R)\to\Ext^i(\tr Z^\prime,R)\to\cdots
$$
is obtained and it deduces the claim. 
\renewcommand{\qedsymbol}{$\square$}
\end{proof}

The implication $(5)\Rightarrow(1)$ follows from the above claim.

Now suppose that $f$ satisfies $(\t_1)$. 
Let $t:X\to P$ be a left $\proj R$-approximation of $X$.
Theorem \ref{T_1} implies that $\binom{f}{t}$ is injective.
Also, $\binom{f}{t}^\ast$ is surjective.
Hence the implication $(2)\Rightarrow(3)$ holds.
If $f$ satisfies $(\t_n)$, then it follows from the above claim that $\Cokv  f=\Cok \binom{f}{t}$ is $(n-1)$-torsionfree.
Thus the implication $(1)\Rightarrow(2)$ follows. 

The implications $(3)\Rightarrow(4), (3)\Rightarrow(5)$ and $(4)\Rightarrow(6)$ are clear. 

Assume that there is an exact sequence $0\to X^\prime\xrightarrow{f^\prime}Y^\prime\to Z^\prime\to0$ such that $f\approx f'$, $Z'$ is $(n-1)$-torsionfree, $f'^\ast$ is surjective and $\sgrade\Ext^1(Z',R)\ge n-1$.
We take a left $\proj R$-approximation $t:X^\prime\to P$ of $X$ and construct the pushout diagram:
$$
\xymatrix@R-1pc@C-1pc{
0\ar[rr]&&X^\prime\ar[rr]^{f^\prime}\ar[dd]^{t}&&Y^\prime\ar[rr]\ar[dd]&&Z^\prime\ar[rr]\ar@{=}[dd]&&0  \\
&&&&&&&& \\
0\ar[rr]&&P\ar[rr]&&W\ar[rr]&&Z^\prime\ar[rr]&&0.
}
$$
Then $W$ is $(n-1)$-torsionfree by \cite[Theorem 1.1]{AR}.
There is an exact sequence $0\to X^\prime\xrightarrow{\binom{f^\prime}{t}}Y^\prime\oplus P\to Z^\prime\to0$ with $\binom{f}{t}^\ast$ surjective.
Thus $f^\prime$ satisfies the condition $(3)$, and so $f$ satisfies the condition $(1)$.

Finally, we assume that the implication $(6)\Rightarrow(1)$ holds for every homomorphism from an $(n-1)$-torsionfree module, 
and $0\to X\xrightarrow{f}Y\to Z\to0$ is an exact sequence with $X$ and $Z$ in $\tf_{n-1}(R)$.
Since $f$ satisfies $(6)$, $f$ satisfies $(\t_n)$.
By Remark \ref{bydef}(4), $Y$ is $(n-1)$-torsionfree, and so $\tf_{n-1}(R)$ is closed under extensions.
\end{proof}

For a homomorphism $f:X\to Y$ of $R$-modules, we decompose $f$ as $X\overset{\tilde{f}}{\twoheadrightarrow}\image f\hookrightarrow Y$.
It immediately follows from Theorem \ref{T_1} that $f$ satisfies $(\t_1)$ if and only if so does $\tilde{f}$.
For the condition $(\t_n)$ where $n$ is arbitrary, the following holds. 

\begin{cor}\label{ftilde}
Let $f:X\to Y$ be a homomorphism of $R$-modules and $n\ge0$ be an integer. 
Decompose $f$ as $X\xrightarrow{\tilde{f}}\image{f}\xrightarrow{\iota}Y$.
Suppose that $\Cok f$ is $(n-1)$-torsionfree and $\sgrade_{R^{\mathrm{op}}}\Ext_R^1(\Cok f,R)\ge n-1$.
Then $f$ satisfies $(\t_n)$ if and only if $\tilde{f}$ satisfies $(\t_n)$. 
\end{cor}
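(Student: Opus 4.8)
The plan is to reduce everything to showing that the inclusion $\iota\colon\image f\to Y$ satisfies $(\t_n)$, and then to feed this into the composability properties recorded in Remark \ref{bydef}. First I would dispose of the degenerate case $n=0$, where every $R$-homomorphism satisfies $(\t_0)$ by definition and there is nothing to prove; so assume $n\ge1$ from now on.

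The main step is as follows. Since $\iota$ is a monomorphism with cokernel $\Cok\iota=\Cok f$, we have a short exact sequence $0\to\image f\xrightarrow{\iota}Y\to\Cok f\to0$. This exhibits $\iota$ as a morphism satisfying condition $(6)$ of Theorem \ref{main1} with $Z^\prime=\Cok f$ (taking $f^\prime=\iota$, so that $f^\prime\approx\iota$ trivially), using that $\Cok f$ is $(n-1)$-torsionfree by hypothesis. The extra hypothesis $\sgrade_{R^{\mathrm{op}}}\Ext_R^1(\Cok f,R)\ge n-1$ is precisely what Theorem \ref{main1} demands in order to run the implication $(6)\Rightarrow(1)$, so we conclude that $\iota$ satisfies $(\t_n)$.

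Now the corollary is immediate from the factorization $f=\iota\tilde f$. If $\tilde f$ satisfies $(\t_n)$, then so does the composite $f=\iota\tilde f$ by Remark \ref{bydef}(2), since $\iota$ satisfies $(\t_n)$. Conversely, if $f$ satisfies $(\t_n)$, then, $\iota$ satisfying $(\t_n)$ as well, Remark \ref{bydef}(3) forces $\tilde f$ to satisfy $(\t_n)$.

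I do not expect any serious obstacle: the entire mathematical content is already absorbed into Theorem \ref{main1}. The only points requiring attention are to identify $Z^\prime$ with $\Cok f$ so that the torsionfreeness and strong-grade hypotheses line up exactly with those of the implication $(6)\Rightarrow(1)$, and to treat $n=0$ (where "$(n-1)$-torsionfree" and the strong-grade condition become vacuous) separately.
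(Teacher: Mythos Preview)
Your proposal is correct and follows essentially the same route as the paper's proof: apply the implication $(6)\Rightarrow(1)$ of Theorem \ref{main1} to the exact sequence $0\to\image f\xrightarrow{\iota}Y\to\Cok f\to0$ to see that $\iota$ satisfies $(\t_n)$, and then invoke Remark \ref{bydef}(2)(3). The only additions are your explicit handling of the trivial case $n=0$ and the spelling out of which condition in Theorem \ref{main1} is being verified.
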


\begin{proof}
Applying the implication $(6)\Rightarrow(1)$ in Theorem \ref{main1} to the exact sequence $0\to\image f\xrightarrow{\iota}Y\to\Cok f\to0$, we see that $\iota$ satisfies $(\t_n)$.
Therefore, the assertion follows from Remark \ref{bydef}(2)(3).
\end{proof}

The following remark is a collection of several observations about Corollary \ref{ftilde}.
It also gives some examples for the condition $(\t_n)$.

\begin{rmk}\label{ftildermk}
Let $f:X\to Y$ be a homomorphism of $R$-modules, and decompose $f$ as $X\xrightarrow{\tilde{f}}\image f\xrightarrow{\iota} Y$.
\begin{enumerate}[\rm(1)]
    \item
    The homomorphism $f$ satisfies $(\t_1)$ if and only if so does $\tilde{f}$.
    This is the situation where we let $n=1$ in Corollary \ref{ftilde}.
    \item 
    If $f$ satisfies $(\t_2)$, then so does $\tilde{f}$.
    Indeed, $\Ext^1(\tr\iota,R)$ is surjective as $\Ext^1(\tr f,R)$ is bijective.
    Since $\iota$ is injective, so is $\Ext^1(\tr\iota,R)$ by Theorem \ref{T_1}.
    Hence $\Ext^1(\tr\iota.R)$ is bijective, and so is $\Ext^1(\tr\tilde{f},R)$.
    Moreover, as $\Ext^2(\tr\tilde{f},R)$ is injective, $\tilde{f}$ satisfies $(\t_2)$.
    \item
    If $R$ has positive injective dimension, then there exists an $R$-homomorphism $f$ which does not satisfy $(\t_2)$ such that $\tilde{f}$ satisfies $(\t_\infty)$.
    In fact, then there is an $R$-module $X$ which is not torsionless.
    Let $f=\binom{1}{0}:R\to R\oplus X$.
    The homomorphism $\tilde{f}$ satisfies $(\t_\infty)$, but $f$ does not satisfy $(\t_2)$ by Remark \ref{bydef}(4).
    \item
    Let $n\ge1$ be an integer. 
    If $\tf_{n-1}(R)$ is not closed under extensions, then there exists an $R$-homomorphism $f$ such that $\Cok f$ is $(n-1)$-torsionfree, $\tilde{f}$ satisfies $(\t_\infty)$, and $f$ does not satisfy $(\t_n)$.
    Indeed, there is an exact sequence $0\to X\xrightarrow{f}Y\to Z\to0$, where $X$ and $Z$ are $(n-1)$-torsionfree but $Y$ is not so.
    Hence $f$ satisfies the required condition.
    \item
    Let $n\ge3$ be an integer.
    If $\tf_n(R)\subsetneq\tf_1(R)$ holds, then there exists an $R$-homomorphism $f$ which satisfies $(\t_\infty)$ such that $\tilde{f}$ does not satisfy $(\t_n)$.
    In fact, we take an $R$-module $X$ which is not $n$-torsionfree but torsionless.
    Then there is a monomorphism $f_1:X\to P$ with $P$ projective.
    Moreover, We choose an epimorphism $f_2:Q\to X$ with $Q$.
    The composite $f=f_1 f_2$ satisfies the desired condition.
\end{enumerate}
\end{rmk}

By Theorem \ref{main2}, if a homomorphism $f:X\to Y$ satisfies $(\t_1)$, then $\Kerv f$ is torsionless (and, therefore, so is $\Ker f$).
However, even if $f$ satisfies $(\t_2)$, $\Kerv f$ does not always become reflexive.

\begin{ex}
Suppose that there exists a second syzygy module $X=\syz^2 C$ such that $X\notin\tf_2(R)$.
There is an exact sequence $0\to X\to P\xrightarrow{f}\syz C\to0$ with $P$ projective.
Then $f$ satisfies $(\t_2)$, but $\Kerv f\approx\Ker f\cong X$ is not reflexive.
\end{ex}

From the following proposition, if a homomorphism $f:X\to Y$ of $R$-modules satisfies $(\t_n)$, then $\Kerv f$ becomes an $n$-syzygy module. 

\begin{cor}\label{syztrf}
Let $n\ge1$ be an integer and $f:X\to Y$ a homomorphism of $R$-modules.
If $f$ satisfies $(\t_n)$, then $\Kerv f$ is a first syzygy of an $(n-1)$-torsionfree module.
\end{cor}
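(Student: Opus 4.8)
The plan is to read the conclusion off the two earlier results that already do all the work: the characterization of $(\t_n)$ in Theorem \ref{main1} and the stable identification of $\Kerv f$ in Lemma \ref{diag}. Concretely, since $f$ satisfies $(\t_n)$, the equivalence $(1)\Leftrightarrow(2)$ of Theorem \ref{main1} tells us two things at once: that $f$ satisfies $(\t_1)$, and that the module $\Cokv f$ is $(n-1)$-torsionfree.

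Next I would feed the first of these facts into Lemma \ref{diag}: because $f$ satisfies $(\t_1)$, its last assertion yields the stable isomorphism $\syz\Cokv f\approx\Kerv f$. Combining this with the second fact, $\Kerv f$ is stably isomorphic to a first syzygy of the $(n-1)$-torsionfree module $\Cokv f$. Since $\Kerv f$ is only determined up to projective summands in the first place (Definition \ref{dfn1.3}(3)), and since the representative $\syz\Cokv f$ is literally a first syzygy of $\Cokv f$, this is exactly the claim. (Recalling Remark \ref{tfrmk}(2), one may also phrase the output as: $\Kerv f$ is an $n$-syzygy module, matching the sentence preceding the statement.)

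I do not expect a genuine obstacle here, as the argument is a two-step assembly of results proved above. If one prefers to bypass Theorem \ref{main1} and argue by hand, the route is: choose a right $\proj R$-approximation $t\colon Q\to Y$, so $(f,t)$ is surjective, $\Ker(f,t)\approx\Kerv f$, and $(f,t)\approx f$ (the inclusion $\binom{1}{0}\colon X\to X\oplus Q$ is a stable isomorphism, with retraction $(1,0)$, the difference from the identity on $X\oplus Q$ factoring through $Q$); hence $(f,t)$ satisfies $(\t_n)$ by Remark \ref{bydef}(1). The exact sequence $0\to\Ker(f,t)\to P\to\Cokv(f,t)\to0$ extracted in the proof of Lemma \ref{diag} exhibits $\Ker(f,t)$ as a first syzygy of $\Cokv(f,t)$, and the Claim in the proof of Theorem \ref{main1}, applied to this sequence (whose $R$-dual is surjective since it splits the projective cover), shows $\Cokv(f,t)\approx\Cokv f$ is $(n-1)$-torsionfree. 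Either way the conclusion follows; the first route is the shorter one, so that is what I would write up.
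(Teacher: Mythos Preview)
Your main route is correct and is exactly the paper's proof: invoke Theorem \ref{main1} to get that $f$ satisfies $(\t_1)$ and $\Cokv f$ is $(n-1)$-torsionfree, then apply Lemma \ref{diag} to obtain $\Kerv f\approx\syz\Cokv f$. Your observation that $\Kerv f$ is only determined up to projective summands is the right way to finish.
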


\begin{proof}
By Theorem \ref{main1} and Lemma \ref{diag}, $\Cokv f$ is $(n-1)$-torsionfree and $\Kerv f\approx\syz\Cokv f$.
\end{proof}

We consider the case where $R$ is commutative.
If a homomorphism $f:X\to Y$ satisfies $(\t_n)$, the kernel and the cokernel of $f$ are related as follows.
It gives a partial generalization of Corollary \ref{KKmain}, which is none other than \cite[Theorem 4.12]{KK}.

\begin{prop}\label{T_nS_n}
Suppose that $R$ is commutative.
Let $n\ge1$ be an integer and $f:X\to Y$ a homomorphism of $R$-modules.
Suppose that $f$ satisfies $(\t_n)$.
\begin{enumerate}[\rm(1)]
    \item
    Assume that $R$ satisfies $(\s_n)$.
    If $\Cok f$ satisfies $(\s_{n-2})$, then $\Ker f$ satisfies $(\s_n)$.
    \item
    Assume that $R$ satisfies $(\s_n)$ and $(\g_{n-1})$.
    If $\Cok f$ satisfies $(n-2)$-torsionfree, then $\Ker f$ is $n$-torsionfree.
\end{enumerate}
\end{prop}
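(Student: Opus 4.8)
The plan is to run both parts off the four‑term exact sequence $0\to\Ker f\to P\to\Cokv f\to\Cok f\to0$ with $P$ projective, which is available by Lemma \ref{diag} (since $(\t_n)$ forces $(\t_1)$) together with the fact that $\Cokv f$ is $(n-1)$‑torsionfree, supplied by Theorem \ref{main1}. I would break it into the two short exact sequences $0\to\Ker f\to P\to C\to0$ and $0\to C\to\Cokv f\to\Cok f\to0$, where $C$ is the image of $P$ in $\Cokv f$; in particular $\Ker f$ is a first syzygy of $C$, and $C$ embeds in $\Cokv f$.

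For part (1) this is then just the depth lemma applied twice. Since $\Cokv f$ is $(n-1)$‑torsionfree it is an $(n-1)$‑syzygy by Remark \ref{tfrmk}(2), hence satisfies $(\s_{n-1})$ by Remark \ref{tfrmk}(3), as $R$ satisfies $(\s_{n-1})$. Feeding this and the hypothesis that $\Cok f$ satisfies $(\s_{n-2})$ into the depth lemma \cite[Proposition 1.2.9]{BH} for $0\to C\to\Cokv f\to\Cok f\to0$ gives that $C$ satisfies $(\s_{n-1})$; feeding this and the fact that the projective module $P$ satisfies $(\s_n)$ (because $R$ does) into the depth lemma for $0\to\Ker f\to P\to C\to0$ gives that $\Ker f$ satisfies $(\s_n)$. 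This is a routine localization‑and‑depth computation, and it completes (1).

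For part (2) I first observe that its hypothesis is stronger than that of (1): an $(n-2)$‑torsionfree module is an $(n-2)$‑syzygy by Remark \ref{tfrmk}(2), hence satisfies $(\s_{n-2})$ over $R$ by Remark \ref{tfrmk}(3), so (1) already yields that $\Ker f$ satisfies $(\s_n)$. By Lemma \ref{diag} the module $\Ker f$ also embeds in the projective module $P$, so it is torsionless. It therefore remains to promote ``torsionless and $(\s_n)$'' to ``$n$‑torsionfree'', and this is exactly where the Gorenstein hypothesis $(\g_{n-1})$ enters: over a commutative ring satisfying $(\s_n)$ and $(\g_{n-1})$, a torsionless module satisfying $(\s_n)$ is $n$‑torsionfree — a statement belonging to the Auslander--Bridger circle relating Serre's conditions, local Gorensteinness and torsionfreeness \cite{AB, DT, EG, GT, MTT}. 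Applying this to $\Ker f$ finishes the argument.

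The substance of (2) is thus concentrated in the promotion step, and the main obstacle is to invoke the right statement from that circle with the hypotheses matched correctly — in particular reconciling $(\g_{n-1})$ with conditions phrased via ``$R_\p$ Gorenstein whenever $\depth R_\p<n$'' (the two agree once $R$ satisfies $(\s_n)$, since then $\depth R_\p<n$ forces $\height\p\le n-1$). If a black‑box characterization is not available in the precise form needed, the fallback is to carry out the promotion by hand: apply the exact sequence of \cite[Lemma 3.9]{AB} to $0\to C\to\Cokv f\to\Cok f\to0$ exactly as in the Claim inside the proof of Theorem \ref{main1}, control the relevant $\Ext_R^1(\Cok f,R)$‑type module by localizing at primes of height at most $n-1$ — where $R_\p$ is Gorenstein and $(\Cok f)_\p$ is a suitable syzygy, forcing the support into codimension $\ge n-1$ and hence, via $(\s_n)$, a grade bound — conclude that $C$ is $(n-1)$‑torsionfree, and then deduce that $\Ker f=\syz C$ is $n$‑torsionfree, using that an $n$‑syzygy satisfying $(\s_n)$ over such a ring is $n$‑torsionfree together with a kernel lemma in the spirit of \cite[Theorem 1.1]{AR}. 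Part (1) and the reduction are, by contrast, pure depth‑lemma bookkeeping.
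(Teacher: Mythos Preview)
Your approach is correct and matches the paper's: for (1) the paper likewise uses Theorem \ref{main1} to get that $\Cokv f$ is $(n-1)$-torsionfree, hence satisfies $(\s_{n-1})$ via Remark \ref{tfrmk}, and then runs the depth lemma through the four-term sequence of Lemma \ref{diag}; for (2) the paper also reduces to (1) and then invokes a black box from the literature, namely \cite[Theorem 3.8]{EG}, so your ``promotion step'' is exactly what the paper does (your torsionless observation and fallback sketch are extra but harmless).
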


\begin{proof}
(1) It follows from Remark \ref{tfrmk} and Theorem \ref{main1} that $\Cokv f$ satisfies $(\s_{n-1})$.
Thus the assertion follows by Lemma \ref{diag} and the depth lemma.

(2) The assertion follows from (1) and \cite[Theorem 3.8]{EG}.
\end{proof}


\section{Grade inequalities of Ext modules}
In this section, as an application of Theorem \ref{main2}, we consider the grades of Ext modules.
Let $M$ be an $R$-module and $n\ge 1$ an integer.
Auslander and Bridger \cite{AB} state and prove a criterion for $\syz^i M$ to be $i$-torsionfree for $1\le i\le n$.
By using Theorem \ref{main2}, we can recover \cite[Proposition 2.26]{AB}, which is the most fundamental theorem in studies on $n$-torsionfree modules.

\begin{cor}[Auslander--Bridger]\label{ABProp2.26}
Let $n\ge1$ be an integer and $M$ an $R$-module.
The following are equivalent.
\begin{enumerate}[\rm(1)]
    \item 
    The inequality $\grade_{R^{\mathrm{op}}}\Ext_R^{i}(M,R)\ge i-1$ holds for all $1\le i\le n$.
    \item
    The syzygy $\syz^i M$ is $i$-torsionfree for all $1\le i\le n$.
\end{enumerate}
\end{cor}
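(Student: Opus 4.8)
The plan is to prove both implications at once by induction on $n$, making each step a single application of Theorem \ref{main2} to one carefully chosen homomorphism. The base case $n=1$ is immediate: $\grade_{R^{\mathrm{op}}}\Ext^1_R(M,R)\ge0$ always holds, and $\syz M$ is a submodule of a projective module, hence torsionless, hence $1$-torsionfree by Remark \ref{tfrmk}(1); so (1) and (2) both hold for $n=1$. For $n\ge2$, set $N=\syz^{n-1}M$ and fix an epimorphism $f\colon Q\to N$ with $Q$ projective. The heart of the argument is a dictionary between the conditions of Theorem \ref{main2} attached to this $f$ and syzygy/grade data of $M$. Since $Q$ is projective it is $m$-torsionfree for every $m$, so Remark \ref{bydef}(4) applied to $f$ shows that $f$ satisfies $(\t_m)$ if and only if $N=\syz^{n-1}M$ is $(m-1)$-torsionfree. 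Since $f$ is surjective, $\Kerv f\approx\Ker f\approx\syz^n M$ by Remark \ref{rmkst}(2) together with Schanuel's lemma. Finally $\Ext^1_R(Q,R)=0$, so $\Ker\Ext^1_R(f,R)=\Ext^1_R(N,R)\cong\Ext^n_R(M,R)$ by dimension shifting (here $n\ge2$). Writing $(a_m),(b_m),(c_m)$ for the conditions of Theorem \ref{main2} for $f$, this gives $(a_m)\Leftrightarrow\syz^{n-1}M\in\tf_{m-1}(R)$, $(b_m)\Leftrightarrow\syz^n M\in\tf_m(R)$ and $(c_m)\Leftrightarrow\grade_{R^{\mathrm{op}}}\Ext^n_R(M,R)\ge m$.

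With the dictionary in hand, the induction closes cleanly. By the inductive hypothesis, (1) holds for $n-1$ if and only if (2) holds for $n-1$; and each of (1) and (2) for $n$ is the corresponding statement for $n-1$ together with $\grade_{R^{\mathrm{op}}}\Ext^n_R(M,R)\ge n-1$, resp. $\syz^n M\in\tf_n(R)$. Hence it suffices to show that, assuming the (equivalent) conditions (1) and (2) for $n-1$ hold, so in particular $\syz^{n-1}M\in\tf_{n-1}(R)$, the condition $\grade_{R^{\mathrm{op}}}\Ext^n_R(M,R)\ge n-1$ is equivalent to $\syz^n M\in\tf_n(R)$. For one direction, $\syz^{n-1}M\in\tf_{n-1}(R)$ is exactly $(a_n)$ for $f$, so the implication $(a_n)\wedge(c_{n-1})\Rightarrow(b_n)$ of Theorem \ref{main2} yields $\syz^n M\in\tf_n(R)$. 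For the converse, $\syz^{n-1}M\in\tf_{n-1}(R)\subseteq\tf_{n-2}(R)$ is $(a_{n-1})$ for $f$, so the implication $(a_{n-1})\wedge(b_n)\Rightarrow(c_{n-1})$ — which is Theorem \ref{main2} with $n$ replaced by $n-1$, legitimate since $n-1\ge1$ — yields $\grade_{R^{\mathrm{op}}}\Ext^n_R(M,R)\ge n-1$. This completes the induction and the proof.

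Essentially all of the content is the dictionary step of the first paragraph, and I do not expect any genuine obstacle. The only points needing a little care are the pervasive projective-summand ambiguities — in $\syz^i M$, in $\Kerv f$, and in the transpose hidden inside the condition $(\t_m)$ — which are absorbed uniformly by the stable equivalence $\approx$ through Remarks \ref{rmkst} and \ref{bydef}, together with the elementary dimension-shift isomorphism $\Ext^1_R(\syz^{n-1}M,R)\cong\Ext^n_R(M,R)$. The conceptual point is that feeding an epimorphism out of a projective module into Theorem \ref{main2} converts it verbatim into the Auslander--Bridger statement linking torsionfreeness of syzygies with grades of $\Ext$ modules.
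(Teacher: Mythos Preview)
Your proof is correct and follows essentially the same approach as the paper's: both proceed by induction on $n$, choose an epimorphism $f$ from a projective onto $\syz^{n-1}M$, use Remark~\ref{bydef}(4) to see that $f$ satisfies $(\t_n)$, and then read off the equivalence $(b_n)\Leftrightarrow(c_{n-1})$ from Theorem~\ref{main2} (using the implications $(a_n)\wedge(c_{n-1})\Rightarrow(b_n)$ and $(a_{n-1})\wedge(b_n)\Rightarrow(c_{n-1})$). Your explicit dictionary and your more careful treatment of the base case $n=1$ are welcome elaborations, but the argument is the same.
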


\begin{proof}
We use induction on $n$.
The assertion is trivial for $n=1$.
Let $n>1$.
Assume that $(1)$ or $(2)$ holds.
Then $\syz^{i}M$ is $i$-torsionfree for all $1\le i\le n-1$ by the induction hypothesis.
Let $P\xrightarrow{f}\syz^{n-1}M$ be an epimorphism with $P$ projective.
By Remark \ref{bydef}(4), $f$ satisfies $(\t_n)$.
It follows from Theorem \ref{main2} that $\grade\Ker\Ext^1(f,R)\ge n-1$ if and only if $\Ker f$ is $n$-torsionfree.
As $\Ker\Ext^1(f,R)\cong\Ext^1(\syz^{n-1}M,R)\cong\Ext^n(M,R)$ and $\Ker f\approx\syz^n M$, we have the desired result. 
\end{proof}

Let $X, Y$ be $R$-modules and $n\ge0$ an integer, and take projective resolutions $\cdots\to F_1\to F_0\to X\to0$ and $\cdots\to G_1\to G_0\to Y\to0$.
Furthermore, we take a projective resolution of $\tr X$ of the following form:
$$
\cdots\to H_3\to H_2\to F_0^\ast\to F_1^\ast\to\tr X\to0.
$$
Let $\alpha:X\to\syz^n Y$ be an $R$-homomorphism.
There are $R$-homomorphisms $\alpha_0$ and $\alpha_1$ such that the left diagram below commutes, and we extend $\alpha_0^\ast$ and $\alpha_1^\ast$ to a chain map as in the right diagram below.

$$
\xymatrix@R-1pc@C-1pc{
F_1\ar[r]\ar[d]^{\alpha_1}&F_0\ar[r]\ar[d]^{\alpha_0}&X\ar[r]\ar[d]^{\alpha}&0&&&G_0^\ast\ar[r]\ar[d]^{\beta_{n+1}}&G_1^\ast\ar[r]\ar[d]^{\beta_n}&\cdots\ar[r]&G_{n-1}^\ast\ar[r]\ar[d]^{\beta_2}&G_n^\ast\ar[r]\ar[d]^{\alpha_0^\ast}&G_{n+1}^\ast\ar[d]^{\alpha_1^\ast} \\
G_{n+1}\ar[r]&G_n\ar[r]&\syz^n Y\ar[r]&0&&&H_{n+1}\ar[r]&H_n\ar[r]&\cdots\ar[r]&H_2\ar[r]&F_0^\ast\ar[r]&F_1^\ast
}
$$
Hence we get a commutative diagram with exact rows:
$$
\xymatrix@R-1pc@C-1pc{
H_n^\ast\ar[r]\ar[d]^{\beta_n^\ast}&H_{n+1}^\ast\ar[r]\ar[d]^{\beta_{n+1}^\ast}&\tr\syz^n\tr X\ar[r]\ar[d]^{\theta_{X,Y}^n(\alpha)}&0 \\
G_1\ar[r]&G_0\ar[r]&Y\ar[r]&0.
}
$$
Then the following lemma holds.

\begin{lem}\cite[Corollary 3.4]{AR}\label{ARCor3.4}
The above assignment $\underline{\alpha}\mapsto\underline{\theta_{X,Y}^n(\alpha)}$ gives a natural isomorphism
$$
\lhom_R(X,\syz^n Y)\xrightarrow{\sim}\lhom_R(\tr\syz^n\tr X,Y).
$$
\end{lem}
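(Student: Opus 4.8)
The plan is to check that the assignment $\underline{\alpha}\mapsto\underline{\theta^n_{X,Y}(\alpha)}$ is (i) well defined, (ii) natural in $X$ and $Y$, and (iii) bijective, the last being the essential point. For (i) one observes that any two lifts $(\alpha_0,\alpha_1)$ of a fixed $\underline{\alpha}$ to the chosen resolutions differ by a chain homotopy, and that applying $(-)^\ast$, extending to chain maps, and applying $(-)^\ast$ again all transport this homotopy; hence the induced maps $\tr\syz^n\tr X\to Y$ coincide in $\lmod R$, and if $\alpha$ factors through a projective then so does every stage of the construction and thus $\theta^n_{X,Y}(\alpha)$. Naturality in $Y$ is immediate from the construction (compose the lift with a lift of a given $Y\to Y'$), and naturality in $X$ follows after recalling that $\tr\syz^n\tr$ is an additive functor on $\lmod R$ and tracking the lifts through the two dualizations.

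For (iii) I would argue as follows. Recall the standard natural isomorphism $\lhom_R(M,N)\cong\Tor^R_1(N,\tr M)$ for $R$-modules $M,N$: computing $\Tor^R_1(N,\tr M)$ from a projective resolution $\cdots\to F_0^\ast\to F_1^\ast\to\tr M\to0$ of $\tr M$ coming from a presentation $F_1\to F_0\to M\to0$, one identifies it with $\Hom_R(M,N)$ modulo the homomorphisms factoring through a finitely generated free module. Applying this and dimension shifting in $\Tor$ yields natural isomorphisms
\begin{align*}
\lhom_R(X,\syz^n Y)&\cong\Tor^R_1(\syz^n Y,\tr X)\cong\Tor^R_{n+1}(Y,\tr X),\\
\lhom_R(\tr\syz^n\tr X,Y)&\cong\Tor^R_1(Y,\tr(\tr\syz^n\tr X))\cong\Tor^R_1(Y,\syz^n\tr X)\cong\Tor^R_{n+1}(Y,\tr X)
\end{align*}
where on the second line the middle isomorphism uses $\tr\tr W\approx W$ together with the fact that $\Tor^R_i(-,P)=0$ for $i\ge1$ and $P$ projective. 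Composing, one obtains a natural isomorphism $\lhom_R(X,\syz^n Y)\xrightarrow{\sim}\lhom_R(\tr\syz^n\tr X,Y)$. Since each of these four isomorphisms is built from the chosen projective resolutions in precisely the manner in which $\theta^n_{X,Y}$ is built — dualize, resolve $\tr X$, map the dual complex of $G_\bullet$ into it, dualize back — unwinding the construction shows that the composite is exactly $\underline{\alpha}\mapsto\underline{\theta^n_{X,Y}(\alpha)}$.

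I expect the one genuinely delicate point to be this last identification: matching the explicit chain-level recipe defining $\theta^n_{X,Y}$ with the abstract composite of the four isomorphisms above (equivalently, checking that $\theta^n_{X,Y}$ sends the canonical ``unit'' element to the identity). It is conceptually routine but notationally heavy. An alternative that sidesteps it is a direct induction on $n$: the case $n=0$ reduces to the transpose duality, since $\tr\syz^0\tr X=\tr\tr X$ is naturally stably isomorphic to $X$, under which $\theta^0_{X,Y}$ is literally the identity; for the inductive step one feeds a short exact sequence $0\to\syz^n Y\to Q\to\syz^{n-1}Y\to0$ with $Q$ projective into the transpose exact sequence of \cite[Lemma 3.9]{AB} and runs a five-lemma comparison of the resulting long exact sequences against the inductive hypothesis. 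The obstacle there is that $\syz$ does not preserve short exact sequences, so one must replace the naive sequences by the shifted ones of Remark \ref{rmkst}(3) and keep careful track of the projective error terms. Either way, the only mathematical input is the interplay of transpose, syzygy, and the $\Ext$/$\Tor$--transpose formulas; the remainder is diagram chasing.
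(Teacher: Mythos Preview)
The paper does not give its own proof of this lemma; it simply quotes the result as \cite[Corollary 3.4]{AR}. So there is nothing to compare your argument against in the present paper.

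That said, your outline is a standard and correct route to the result. The Auslander formula $\lhom_R(M,N)\cong\Tor^R_1(N,\tr M)$ together with dimension shifting in both variables does yield natural isomorphisms of both $\lhom_R(X,\syz^n Y)$ and $\lhom_R(\tr\syz^n\tr X,Y)$ with $\Tor^R_{n+1}(Y,\tr X)$, exactly as you wrote. You are also right that the only point requiring care is the identification of the explicit chain-level map $\theta^n_{X,Y}$ with the abstract composite; this is not difficult but does demand a careful bookkeeping of the resolutions and of the homotopies involved. Your proposed induction on $n$ (with base case $\tr\tr X\approx X$ and inductive step via the transpose exact sequence of \cite[Lemma 3.9]{AB}) is an equally viable alternative and is in fact closer in spirit to the way Auslander and Reiten organize the argument in \cite{AR}.
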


Let $M$ be an $R$-module and $n\ge1$ an integer.
We denote by $\J_n^2$ the functor $\tr\syz^n\tr\syz^n$.
Let $\cdots\to P_1\to P_0\to M\to0$ be a projective resolution, and take a projective resolution of $\tr\syz^n M$ of the form
$$
\cdots\to Q^n_{3}\to Q^n_{2}\to P_n^\ast\to P_{n+1}^\ast\to\tr\syz^n M\to0.
$$
There are chain maps $\{ f_i\}$, $\{ g_i\}$ given in the left diagram below, which induce the right commutative diagram.

\footnotesize
$$
\xymatrix@R-1pc@C-1pc{
P_0^\ast\ar[r]\ar[d]^{f_n}&P_1^\ast\ar[r]\ar[d]^{f_{n-1}}&P_2^\ast\ar[r]\ar[d]^{f_{n-2}}&\cdots\ar[r]&P_{n-2}^\ast\ar[r]\ar[d]^{f_2}&P_{n-1}^\ast\ar[r]\ar@{=}[d]&P_n^\ast\ar@{=}[d]&&&{Q_n^n}^\ast\ar[r]\ar[d]^{g_{n-1}^\ast}&{Q_{n+1}^n}^\ast\ar[r]\ar[d]^{g_n^\ast}&\J_n^2 M\ar[r]\ar[d]^{\psi_M^{n,n-1}}&0 \\
Q_n^{n-1}\ar[r]\ar[d]^{g_n}&Q_{n-1}^{n-1}\ar[r]\ar[d]^{g_{n-1}}&Q_{n-2}^{n-1}\ar[r]\ar[d]^{g_{n-2}}&\cdots\ar[r]&Q_2^{n-1}\ar[r]\ar[d]^{g_2}&P_{n-1}^\ast\ar[r]\ar[d]^{g_1}&P_n^\ast\ar[r]\ar@{=}[d]&P_{n+1}^\ast\ar@{=}[d]&&{Q_{n-1}^{n-1}}^\ast\ar[r]\ar[d]^{f_{n-1}^\ast}&{Q_n^{n-1}}^\ast\ar[r]\ar[d]^{f_n^\ast}&\J_{n-1}^2 M\ar[r]\ar[d]^{\psi_M^{n-1}}&0 \\
Q_{n+1}^n\ar[r]&Q_n^n\ar[r]&Q_{n-1}^n\ar[r]&\cdots\ar[r]&Q_3^n\ar[r]&Q_2^n\ar[r]&P_n^\ast\ar[r]&P_{n+1}^\ast&&P_1\ar[r]&P_0\ar[r]&M\ar[r]&0
}
$$
\normalsize
We obtain the maps $\psi^{n,n-1}_M:\J^2_n M\to \J^2_{n-1}M$ and $\psi^{n-1}_M:\J^2_{n-1}M\to M$.
Note that $\underline{\psi^{n,n-1}_M}$ and $\underline{\psi^{n-1}_M}$ in $\lmod R$ are uniquely determined by $M$.
We take a projective resolution of $\J^2_{n-1} M$ of the form
$$
\cdots\to S_3\to S_2\to {Q_{n-1}^{n-1}}^\ast\to {Q_n^{n-1}}^\ast\to \J^2_{n-1} M\to0.
$$
There is a chain map $\left\{h_i \right\}$ as in the left diagram below, which yields the right commutative diagram.
\footnotesize
$$
\xymatrix@R-1pc@C-1pc{
P_{n+1}\ar[r]\ar[d]^{h_{n+1}}&P_n\ar[r]\ar[d]^{h_n}&P_{n-1}\ar[r]\ar[d]^{h_{n-1}}&{Q_2^{n-1}}^\ast\ar[r]\ar[d]^{h_{n-2}}&\cdots\ar[r]&{Q_{n-2}^{n-1}}^\ast\ar[r]\ar[d]^{h_2}&{Q_{n-1}^{n-1}}^\ast\ar[r]\ar@{=}[d]&{Q_n^{n-1}}^\ast\ar@{=}[d]&&P_{n+1}\ar[r]\ar[d]^{h_{n+1}}&P_n\ar[r]\ar[d]^{h_n}&\syz^n M\ar[r]\ar[d]^{\gamma_M}&0\\
S_{n+1}\ar[r]&S_n\ar[r]&S_{n-1}\ar[r]&S_{n-2}\ar[r]&\cdots\ar[r]&S_2\ar[r]&{Q_{n-1}^{n-1}}^\ast\ar[r]&{Q_n^{n-1}}^\ast&&S_{n+1}\ar[r]&S_n\ar[r]&\syz^n \J^2_{n-1} M\ar[r]&0
}
$$
\normalsize
We have that $\underline{\psi^{n,n-1}_{M}}=\underline{\theta^n_{\syz^n M,\J^2_{n-1}M}(\gamma_M)}$ in $\lmod R$.
Indeed, the commutative diagram in the lower left gives the diagram in the lower right.
\footnotesize
$$
\xymatrix@R-1pc@C-1pc{
Q_n^{n-1}\ar[r]\ar@{=}[d]&Q_{n-1}^{n-1}\ar[r]\ar@{=}[d]&S_2^\ast\ar[r]\ar[d]^{h_2^\ast}&\cdots\ar[r]&S_{n-2}^\ast\ar[r]\ar[d]^{h_{n-2}^\ast}&S_{n-1}^\ast\ar[r]\ar[d]^{h_{n-1}^\ast}&S_n^\ast\ar[r]\ar[d]^{h_n^\ast}&S_{n+1}^\ast\ar[d]^{h_{n+1}^\ast}&&{Q_n^n}^\ast\ar[r]\ar[d]^{g_{n-1}^\ast}&{Q_{n+1}^n}^\ast\ar[r]\ar[d]^{g_n^\ast}&\J_n^2 M\ar[r]\ar[d]^{\psi_M^{n,n-1}}&0 \\
Q_n^{n-1}\ar[r]\ar[d]^{g_n}&Q_{n-1}^{n-1}\ar[r]\ar[d]^{g_{n-1}}&Q_{n-2}^{n-1}\ar[r]\ar[d]^{g_{n-2}}&\cdots\ar[r]&Q_2^{n-1}\ar[r]\ar[d]^{g_2}&P_{n-1}^\ast\ar[r]\ar[d]^{g_1}&P_n^\ast\ar[r]\ar@{=}[d]&P_{n+1}^\ast\ar@{=}[d]&&{Q_{n-1}^{n-1}}^\ast\ar[r]\ar@{=}[d]&{Q_n^{n-1}}^\ast\ar[r]\ar@{=}[d]&\J_{n-1}^2 M\ar[r]\ar@{=}[d]&0 \\
Q_{n+1}^{n}\ar[r]&Q_n^n\ar[r]&Q_{n-1}^n\ar[r]&\cdots\ar[r]&Q_3^n\ar[r]&Q_2^n\ar[r]&P_n^\ast\ar[r]&P_{n+1}^\ast&&{Q_{n-1}^{n-1}}^\ast\ar[r]&{Q_n^{n-1}}^\ast\ar[r]&\J_{n-1}^2 M\ar[r]&0
}
$$
\normalsize
By Lemma \ref{ARCor3.4} (and the observation stated before it), we have that $\underline{\theta^n_{\syz^n M,J}(\gamma_M)}=\underline{1_J}\circ \underline{\psi^{n,n-1}_{M}}=\underline{\psi^{n,n-1}_{M}}$ in $\lmod R$, where $J=\J^2_{n-1}M$.

We define a map $\Phi_M:\lhom_R(\syz^n M,\syz^n M)\to\lhom_R(\J^2_n M,\J^2_{n-1}M)$ by the composition $\lhom_R(\syz^n M,\syz^n M)\xrightarrow{\lhom(\syz^n M,\gamma_M)}\lhom_R(\syz^n M,\syz^n \J^2_{n-1}M)\xrightarrow{\theta^n_{\syz^n M,\J^2_{n-1}M}}\lhom_R(\J^2_n M,\J^2_{n-1}M)$.
We get the following lemma.

\begin{lem}\label{unit}
With the notation above, the following hold.
\begin{enumerate}[\rm(1)]
    \item
    One has $\Phi_M(\underline{1_{\syz^n M}})=\underline{\psi^{n,n-1}_M}$ in $\lmod R$.
    \item
    One has $\underline{\psi_M^n}=\underline{\psi_M^{n-1}}\underline{\psi_M^{n,n-1}}$ in $\lmod R$.
    \item
    One has $\psi^{n,n-1}_M\approx\tr\syz\tr\psi^{n-1,n-2}_{\syz M}$.
\end{enumerate}
\end{lem}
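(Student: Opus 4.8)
The plan is to take the three parts in turn, using the natural isomorphism of Lemma~\ref{ARCor3.4} together with the computation carried out just before the statement of the lemma. Part~(1) is essentially a restatement: since $\gamma_M\circ 1_{\syz^n M}=\gamma_M$, the definition of $\Phi_M$ gives $\Phi_M(\underline{1_{\syz^n M}})=\underline{\theta^n_{\syz^n M,\J_{n-1}^2 M}(\gamma_M)}$, and the diagram chase preceding the lemma shows that this class equals $\underline{\psi_M^{n,n-1}}$, so nothing more is needed.

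For~(2) I would first record that, by the construction of the natural maps $\psi_M^{i}$ recalled in this section (just as $\psi_M^{n,n-1}$ was identified with $\underline{\theta^n_{\syz^n M,\J_{n-1}^2 M}(\gamma_M)}$), one has $\underline{\psi_M^n}=\underline{\theta^n_{\syz^n M,M}(1_{\syz^n M})}$ — the image of $\underline{1_{\syz^n M}}$ under the isomorphism of Lemma~\ref{ARCor3.4} with $X=\syz^n M$ and $Y=M$. Next, combining Part~(1) with the naturality of that isomorphism in the variable $Y$, applied to the morphism $\underline{\psi_M^{n-1}}\colon\J_{n-1}^2 M\to M$, yields
\[
\underline{\psi_M^{n-1}}\,\underline{\psi_M^{n,n-1}}
=\underline{\psi_M^{n-1}}\circ\underline{\theta^n_{\syz^n M,\J_{n-1}^2 M}(\gamma_M)}
=\underline{\theta^n_{\syz^n M,M}\bigl(\syz^n(\psi_M^{n-1})\circ\gamma_M\bigr)}.
\]
Since $\theta^n_{\syz^n M,M}$ induces a bijection on stable homomorphism groups, assertion~(2) is therefore equivalent to the single identity $\underline{\syz^n(\psi_M^{n-1})\circ\gamma_M}=\underline{1_{\syz^n M}}$ in $\lhom_R(\syz^n M,\syz^n M)$. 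This is the heart of the argument, and I expect it to be the main obstacle: it must be extracted by tracking the chain maps, namely by inspecting the defining diagram of $\gamma_M$ (the $\{h_i\}$-diagram) alongside the chain maps $\{f_i\}$, $\{g_i\}$ that define $\psi_M^{n-1}$, and checking that the composite of the chain map lifting $\syz^n(\psi_M^{n-1})$ with the chain map $\{h_i\}$ lifting $\gamma_M$ is, stably, the identity chain map on a projective resolution of $\syz^n M$, so that it induces $\underline{1_{\syz^n M}}$. An alternative that avoids $\theta^n$: splice the chain maps that define $\psi_M^{n,n-1}$ and $\psi_M^{n-1}$ in the three-row diagram to obtain a single chain map from the presentation ${Q_n^n}^\ast\to{Q_{n+1}^n}^\ast\to\J_n^2 M\to0$ to $P_1\to P_0\to M\to0$, and then check that this composite is one of the chain maps computing $\underline{\psi_M^n}$.

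For~(3), the cleanest route is a direct comparison of the two explicit constructions. First note that $\tr$, $\syz$ and $\tr$ induce functors between the relevant stable categories, so $\tr\syz\tr$ is an additive endofunctor of $\lmod R$; moreover the standard stable isomorphism $\tr\tr\approx\id$ together with $\syz^k(\syz M)\approx\syz^{k+1}M$ give identifications $\tr\syz\tr\bigl(\J_k^2(\syz M)\bigr)\approx\J_{k+1}^2 M$ for $k=n-1$ and $k=n-2$, so that after these identifications both sides of~(3) are maps $\J_n^2 M\to\J_{n-1}^2 M$. I would then observe that the recipe producing $\psi_{\syz M}^{n-1,n-2}$ — fix a projective resolution of $\syz M$ (say the one induced from a projective resolution of $M$), resolve the iterated transposes, and read off the chain map of the resulting three-row diagram — is carried, by dualizing, one syzygy shift, and dualizing again the resolutions in play, exactly onto the recipe producing $\psi_M^{n,n-1}$; hence the two maps agree in $\lmod R$. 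Equivalently, using Part~(1), it suffices to check that $\gamma_M$ corresponds to $\gamma_{\syz M}$ under the identifications above and that the isomorphisms of Lemma~\ref{ARCor3.4} at indices $n$ and $n-1$ are intertwined by $\tr\syz\tr$. Either way the remaining work in~(3) is purely bookkeeping with the explicit projective resolutions, matching the two constructions term by term; the only delicate point is again a chain-level verification, comparable in flavour to the obstacle in~(2).
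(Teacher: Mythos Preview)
The paper does not actually give a proof of this lemma: Part~(1) is established in the paragraph immediately preceding the statement (the computation $\underline{\theta^n_{\syz^n M,\J_{n-1}^2 M}(\gamma_M)}=\underline{\psi_M^{n,n-1}}$ is carried out there), while Parts~(2) and~(3) are simply asserted and left to the reader as consequences of the explicit chain-level constructions. So there is no ``paper's own proof'' to compare against beyond those constructions.

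Your outline is sound. For~(1) you correctly observe that nothing remains to be done. For~(2), your \emph{alternative} route is in fact the natural one and is essentially immediate from the paper's setup: the right-hand three-row diagram defining $\psi_M^{n,n-1}$ and $\psi_M^{n-1}$ is obtained by dualizing the left-hand three-row diagram of chain maps $\{f_i\}$ and $\{g_i\}$, so the composite $\psi_M^{n-1}\psi_M^{n,n-1}$ is induced by the dual of the composite chain map $\{g_if_i\}$; since $\{g_if_i\}$ is a chain map from $P_\bullet^\ast$ to the resolution of $\tr\syz^n M$ lifting the identity at the right end, it is exactly one of the chain maps whose dual defines $\psi_M^n$, and uniqueness up to homotopy gives $\underline{\psi_M^n}=\underline{\psi_M^{n-1}}\,\underline{\psi_M^{n,n-1}}$. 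Your first approach to~(2), via naturality of $\theta^n$ and the identity $\underline{\syz^n(\psi_M^{n-1})\circ\gamma_M}=\underline{1_{\syz^n M}}$, is correct but needlessly indirect by comparison. For~(3), your plan of matching the two constructions term by term after the identifications $\tr\syz\tr\J_k^2(\syz M)\approx\J_{k+1}^2 M$ is the right one; note that the paper later uses~(3) iteratively (in the proof of Lemma~\ref{extex}) to obtain $\psi_M^{n,n-1}\approx\tr\syz^{n-1}\tr\psi_{\syz^{n-1}M}^{1}$, confirming that the intended argument is precisely this inductive bookkeeping with the explicit resolutions.
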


From Theorem \ref{main2} and a basic property of the map $\psi^1_M$, we can deduce \cite[Theorem 1.1]{AR} if $R$ is commutative.
It is a well-known result about the extension closedness of the category of $n$-torsionfree modules.

\begin{cor}[Auslander--Reiten]\label{ARThm1.1}
Suppose that $R$ is commutative.
Let $n\ge1$ be an integer and $Z$ an $n$-torsionfree module.
Then the following are equivalent.
\begin{enumerate}[\rm(1)]
    \item
    The inequality $\grade_R\Ext_R^1(Z,R)\ge n$ holds.
    \item
    If $0\to X\to Y\to Z\to0$ is exact with $X$ $n$-torsionfree, then $Y$ is $n$-torsionfree.
    \item
    If $0\to P\to W\to Z\to0$ is exact with $P$ projective, then $W$ is $n$-torsionfree.
\end{enumerate}
\end{cor}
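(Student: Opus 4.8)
The plan is to establish the cycle $(1)\Rightarrow(2)\Rightarrow(3)\Rightarrow(1)$. The implication $(2)\Rightarrow(3)$ is immediate: a projective module $P$ satisfies $\tr P\approx0$, hence is $n$-torsionfree, so an exact sequence $0\to P\to W\to Z\to0$ with $P$ projective is a special case of the situation in $(2)$. For $(1)\Rightarrow(2)$, consider an exact sequence $0\to X\xrightarrow{f}Y\to Z\to0$ with $X$ $n$-torsionfree. Since $R$ is commutative and noetherian, the grade of a module is nondecreasing along inclusions of supports, so $\sgrade_R\Ext_R^1(Z,R)=\grade_R\Ext_R^1(Z,R)\ge n$. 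As $Z$ is $n$-torsionfree and carries this $\sgrade$ bound, the implication $(6)\Rightarrow(1)$ of Theorem~\ref{main1}, applied to $0\to X\xrightarrow{f}Y\to Z\to0$ with $n$ replaced by $n+1$, shows that $f$ satisfies $(\t_{n+1})$. Then the implication $(a_{n+1})\wedge(b_n)\Rightarrow(c_n)$ of Remark~\ref{bydef}(4) yields that $Y$ is $n$-torsionfree.

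The core of the argument is $(3)\Rightarrow(1)$, where I would produce a single well-chosen extension of $Z$ and apply $(3)$ to it. Let $m$ be the number of generators of the $R$-module $\Ext_R^1(Z,R)$ and choose $\eta\in\Ext_R^1(Z,R^m)\cong\Ext_R^1(Z,R)^{\oplus m}$ whose components form a generating system of $\Ext_R^1(Z,R)$; it corresponds to an exact sequence $0\to R^m\to W\to Z\to0$. The point of this choice is that the connecting homomorphism $\Hom_R(R^m,R)\to\Ext_R^1(Z,R)$ in the long exact sequence of $\Hom_R(-,R)$ is then surjective, so, as $\Ext_R^1(R^m,R)=0$, one gets $\Ext_R^1(W,R)=0$. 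By $(3)$, $W$ is $n$-torsionfree. Now apply Theorem~\ref{main2} to a surjection $g\colon Q\to W$ with $Q$ projective: since $W$ is $n$-torsionfree, $g$ satisfies $(\t_{n+1})$ by Remark~\ref{bydef}(4), one has $\Kerv g\approx\syz W$ and $\Ker\Ext_R^1(g,R)=\Ext_R^1(W,R)=0$; hence the implication $(a_{n+1})\wedge(c_n)\Rightarrow(b_{n+1})$ of Theorem~\ref{main2} gives that $\syz W$ is $(n+1)$-torsionfree. From $0\to R^m\to W\to Z\to0$, Remark~\ref{rmkst}(3) produces an exact sequence $0\to\syz Z\to F\to W\to0$ with $F$ projective, whence $\syz W\approx\syz Z$, so $\syz Z$ is $(n+1)$-torsionfree. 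Finally, apply Theorem~\ref{main2} to a surjection $Q'\to Z$ with $Q'$ projective: it satisfies $(\t_n)$ since $Z$ is $(n-1)$-torsionfree, its $\Kerv$ is $\approx\syz Z$, and the kernel of its $\Ext_R^1(-,R)$ is $\Ext_R^1(Z,R)$, so the implication $(a_n)\wedge(b_{n+1})\Rightarrow(c_n)$ forces $\grade_R\Ext_R^1(Z,R)\ge n$, which is $(1)$.

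The step I expect to be the genuine obstacle is the construction of the extension $0\to R^m\to W\to Z\to0$ with $\Ext_R^1(W,R)=0$, together with the observation $\syz W\approx\syz Z$: this is precisely the device that lets hypothesis $(3)$ detect one additional degree of torsionfreeness in $\syz Z$ --- the difference between $\grade\ge n-1$ and $\grade\ge n$ --- and it is the only place where commutativity of $R$ is used in an essential way. Everything else is a mechanical application of Theorems~\ref{main1} and~\ref{main2} and of Remark~\ref{bydef}(4) to projective (co)resolutions of $Z$ and $W$. This bootstrap can also be packaged through the natural map $\psi^1_W\colon\tr\syz\tr\syz W\to W$, whose behaviour with respect to the conditions $(\t_\bullet)$ encodes $\grade_R\Ext_R^1(W,R)$; that is the ``basic property of $\psi^1$'' alluded to before the statement, and it gives an alternative, more conceptual route.
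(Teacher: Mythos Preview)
Your argument is correct. The implications $(2)\Rightarrow(3)$ and $(1)\Rightarrow(2)$ match the paper's in spirit; you apply Theorem~\ref{main1} to the inclusion $X\hookrightarrow Y$, whereas the paper applies Theorem~\ref{main2} to the surjection $Y\twoheadrightarrow Z$, but both routes are straightforward translations through Remark~\ref{bydef}(4).

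The genuine difference is in $(3)\Rightarrow(1)$. Your hand-built extension $0\to R^m\to W\to Z\to0$ with $\Ext_R^1(W,R)=0$ is, up to projective summands, precisely the Auslander--Bridger sequence $0\to P\to\J_1^2 Z\oplus Q\xrightarrow{(\psi_Z^1,s)}Z\to0$ of \cite[Proposition~2.21]{AB}, which the paper invokes directly. Having obtained this sequence, the paper then applies Theorem~\ref{main2} \emph{once}, to the surjection $(\psi_Z^1,s)$ itself: the kernel $P$ is projective, so $(b_{n+1})$ is automatic, $(a_n)$ follows from both ends being $n$-torsionfree, and $(c_n)$ gives $\grade\Ext_R^1(Z,R)\ge n$ immediately since $\Ext_R^1(\J_1^2 Z,R)=0$. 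Your route instead passes through the observation $\syz W\approx\syz Z$ and uses two further applications of Theorem~\ref{main2} (to $Q\to W$ and to $Q'\to Z$); this works, but the detour is unnecessary once you notice that the kernel of $W\to Z$ is already projective. Your closing remark about $\psi^1$ points in the right direction, but it is $\psi_Z^1$ (not $\psi_W^1$) that the paper exploits: the map $\psi_Z^1$ is exactly the surjection whose kernel is projective and whose $\Ext^1$-kernel is all of $\Ext_R^1(Z,R)$.
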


\begin{proof}
Assume that the condition $(1)$ holds, and let $0\to X\to Y\xrightarrow{f}Z\to0$ be an exact sequence with $X$ $n$-torsionfree.
As $\grade\Ker\Ext^1(f,R)\ge\grade\Ext^1(Z,R)\ge n$ and $\Ker f\cong X$ is $n$-torsionfree, $f$ satisfies $(\t_n)$ by Theorem \ref{main2}.
It follows from Remark \ref{bydef}(4) that $Y$ is $n$-torsionfree.
Thus the implication $(1)\Rightarrow(2)$ is shown.

The implication $(2)\Rightarrow(3)$ is clearly true.
Suppose that the condition $(3)$ is satisfied.
By \cite[Proposition 2.21]{AB}, there is an exact sequence $0\to P\to \J_1^2 Z\oplus Q\xrightarrow{(\psi_Z^1, s)}Z\to0$, where $P$ and $Q$ are projective.
Let $f=(\psi_Z^1, s)$.
We have that $\J_1^2 Z$ is $n$-torsionfree by assumption.
In particular, $f$ satisfies $(\t_n)$.
Moreover, since $\Ker f\cong P$ is projective, $\grade\Ker\Ext^1(f,R)\ge n$ by Theorem \ref{main2}.
As $\Ext^1(\J_1^2 Z, R)=0$, we have that $\grade\Ext^1(Z,R)=\grade\Ker\Ext^1(f,R)\ge n$, that is, $(1)$ follows.
\end{proof}

We show some lemmas in order to prove the main result of this section.

\begin{lem}\label{psicom}
Let $M$ be an $R$-module and $m, n\ge0$ be integers.
Then $\psi_{M}^{i,i-1}$ satisfies $(\t_n)$ for all $1\le i\le m$ if and only if so does $\psi_{M}^i$ for all $1\le i\le m$.
\end{lem}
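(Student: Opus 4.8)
The plan is to reduce the statement to the composition relation $\underline{\psi_M^i}=\underline{\psi_M^{i-1}}\,\underline{\psi_M^{i,i-1}}$ in $\lmod R$ supplied by Lemma \ref{unit}(2), together with the closure of the condition $(\t_n)$ under composition and ``left cancellation'' recorded in Remark \ref{bydef}(2)(3), and the fact that $\psi_M^0\colon\tr\tr M\to M$ is a stable isomorphism, hence satisfies $(\t_\infty)$ by Remark \ref{bydef}(5); the latter is the $n=0$ instance of the construction preceding Lemma \ref{ARCor3.4} (equivalently, it is the classical statement that $\tr$ induces a duality on $\lmod R$). The case $m=0$ is vacuous, so I assume $m\ge 1$.

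First I would iterate Lemma \ref{unit}(2) to obtain, for each $i\ge 1$,
$$\underline{\psi_M^i}=\underline{\psi_M^0}\circ\underline{\psi_M^{1,0}}\circ\underline{\psi_M^{2,1}}\circ\cdots\circ\underline{\psi_M^{i,i-1}}\qquad\text{in }\lmod R.$$
Since $(\t_n)$ depends only on the image in $\lmod R$ (Definition \ref{Tn}) and is $\approx$-invariant (Remark \ref{bydef}(1)), I may manipulate this factorization freely. For the forward implication, suppose $\psi_M^{j,j-1}$ satisfies $(\t_n)$ for all $1\le j\le m$; fixing $1\le i\le m$, every factor $\psi_M^{j,j-1}$ with $1\le j\le i$ in the displayed composite for $\psi_M^i$ satisfies $(\t_n)$ by hypothesis, and $\psi_M^0$ does so because it is a stable isomorphism, so repeated application of Remark \ref{bydef}(2) shows that $\psi_M^i$ satisfies $(\t_n)$.

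For the converse, suppose $\psi_M^i$ satisfies $(\t_n)$ for all $1\le i\le m$. Fix $1\le i\le m$ and consider $\underline{\psi_M^i}=\underline{\psi_M^{i-1}}\,\underline{\psi_M^{i,i-1}}$: the composite $\psi_M^i$ satisfies $(\t_n)$ by hypothesis, and the left factor $\psi_M^{i-1}$ satisfies $(\t_n)$ as well --- by hypothesis if $i\ge 2$ (as then $1\le i-1\le m$), and because $\psi_M^0$ is a stable isomorphism if $i=1$. Then Remark \ref{bydef}(3) (if $gf$ and $g$ satisfy $(\t_n)$, so does $f$) forces $\psi_M^{i,i-1}$ to satisfy $(\t_n)$, which finishes the argument.

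I do not expect a genuine obstacle here: the proof is essentially formal once Lemma \ref{unit}(2) is available. The only points deserving a moment's care are the base contribution --- namely that $\psi_M^0$ is invertible in $\lmod R$ --- and the observation that the composition relation can be read in both directions, which is exactly what Remark \ref{bydef}(2)(3) provides.
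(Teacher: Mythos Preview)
Your argument is correct and follows essentially the same route as the paper: the paper's proof is the one-line ``The assertion immediately follows from Remark \ref{bydef}(2)(3) and Lemma \ref{unit}(2),'' which is exactly the factorization $\underline{\psi_M^i}=\underline{\psi_M^{i-1}}\,\underline{\psi_M^{i,i-1}}$ combined with the composition and left-cancellation properties of $(\t_n)$. Your only addition is making the base case explicit---namely that $\psi_M^0\colon\tr\tr M\to M$ is a stable isomorphism and hence satisfies $(\t_\infty)$---which the paper leaves tacit but is indeed needed to anchor the induction at $i=1$.
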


\begin{proof}
The assertion immediately follows from Remark \ref{bydef}(2)(3) and Lemma \ref{unit}(2).
\end{proof}

\begin{lem}\label{extex}
Let $n\ge2$ be an integer and $M$ an $R$-module. 
If $\Ext_{R^{\mathrm{op}}}^{n-1}(\Ext_R^n(M,R),R)=0$ $($e.g. $\grade_{R^{\mathrm{op}}}\Ext_R^n(M,R)\ge n$$)$, then
there exists an exact sequence $0\to E\to J\xrightarrow{\psi} J^\prime\to0$ such that $E\approx\tr\syz^{n-2}\Ext_R^n(M,R)$, $J\approx  \J_n^2 M$, $J^\prime\approx \J_{n-1}^2 M$ and $\psi\approx\psi_M^{n,n-1}$.
\end{lem}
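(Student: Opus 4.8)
Lemma \ref{extex} claims that under the vanishing hypothesis $\Ext_{R^{\mathrm{op}}}^{n-1}(\Ext_R^n(M,R),R)=0$, the map $\psi_M^{n,n-1}\colon \J_n^2 M\to\J_{n-1}^2 M$ fits (up to stable isomorphism) into a short exact sequence whose kernel is $\tr\syz^{n-2}\Ext_R^n(M,R)$.

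The plan is to use the identification $\psi_M^{n,n-1}\approx\tr\syz\tr\psi_{\syz M}^{n-1,n-2}$ from Lemma \ref{unit}(3), combined with Lemma \ref{unit}(1) which says $\psi_M^{n,n-1}=\Phi_M(\underline{1_{\syz^n M}})=\underline{\theta^n_{\syz^n M,\J^2_{n-1}M}(\gamma_M)}$. First I would analyze the map $\psi_{\syz^{n-1}M}^{1,0}$, i.e. reduce to the base case $n=1$ of Lemma \ref{unit}(3) iterated, so that the whole question is governed by the map $\psi_N^1\colon \J_1^2 N\to N$ for $N=\syz^{n-1}M$. By \cite[Proposition 2.21]{AB} there is an exact sequence $0\to (\text{projective})\to \J_1^2 N\oplus Q\xrightarrow{(\psi_N^1,s)} N\to 0$ with the relevant $\Ext$-behaviour; more importantly, there is a standard exact sequence built from $\psi_N^1$ whose cokernel (or the relevant term) is $\tr N$ or involves $\Ext^1(N,R)=\Ext^n(M,R)$. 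The key point: the defect of $\psi_M^{n,n-1}$ being a stable isomorphism is controlled by $\Ext_R^1(\syz^{n-1}M,R)\cong\Ext_R^n(M,R)$, and the hypothesis $\Ext_{R^{\mathrm{op}}}^{n-1}(\Ext_R^n(M,R),R)=0$ is exactly what is needed to make the intermediate syzygies behave.

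More concretely, I would proceed by induction on $n\ge2$. For $n=2$: from the definition of $\psi_M^{2,1}$ and the diagrams preceding Lemma \ref{unit}, one extracts an exact sequence $0\to\tr\Ext_R^2(M,R)\to \J_2^2 M\xrightarrow{\psi_M^{2,1}}\J_1^2 M\to 0$ provided $\Ext_{R^{\mathrm{op}}}^1(\Ext_R^2(M,R),R)=0$ (this vanishing guarantees that applying $(-)^\ast$ twice to a presentation of $\Ext_R^2(M,R)$ does not introduce an extra cohomology term that would obstruct exactness on the left). Here $\tr\syz^{0}\Ext_R^2(M,R)=\tr\Ext_R^2(M,R)$, matching the claim. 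For the inductive step, I would apply Lemma \ref{unit}(3): $\psi_M^{n,n-1}\approx\tr\syz\tr\psi_{\syz M}^{n-1,n-2}$; by the inductive hypothesis applied to $\syz M$ (noting $\Ext_R^n(M,R)\cong\Ext_R^{n-1}(\syz M,R)$ and that the grade/vanishing hypothesis transfers), there is an exact sequence $0\to \tr\syz^{n-3}\Ext_R^{n-1}(\syz M,R)\to \J_{n-1}^2\syz M\to\J_{n-2}^2\syz M\to 0$ with middle map $\approx\psi_{\syz M}^{n-1,n-2}$. Applying $\tr\syz\tr=\J_1^2$... — actually applying $\tr$, then $\syz$, then $\tr$ — to this exact sequence, and using that $\J_*^2\syz$ relates to $\J_*^2$ via the $\tr\syz\tr$ shift on the indices, I would obtain the desired sequence with kernel $\tr\syz(\tr\syz^{n-3}\Ext_R^{n-1}(\syz M,R))\approx\tr\syz^{n-2}\Ext_R^n(M,R)$, using the standard fact $\tr\syz\tr\syz^{k}\approx\syz^{k+1}$ up to projectives together with one more application of $\tr$, or more directly by re-deriving the syzygy count. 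The vanishing hypothesis is used at each stage to ensure the relevant left-exactness.

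The main obstacle I expect is bookkeeping: tracking the stable-isomorphism classes through the repeated $\tr$, $\syz$, $\tr$ operations and verifying that $\tr\syz\tr\syz^{n-3}\Ext_R^{n-1}(\syz M,R)$ really is stably isomorphic to $\tr\syz^{n-2}\Ext_R^n(M,R)$ — this requires knowing how $\syz$ and $\tr$ interact on the particular module $\Ext_R^n(M,R)$ and will lean on the vanishing $\Ext_{R^{\mathrm{op}}}^{n-1}(\Ext_R^n(M,R),R)=0$ to kill the correction terms that appear when $\tr\tr$ is not the identity. A cleaner alternative, which I would try first, is to bypass the induction entirely: build the exact sequence directly from the big commutative diagram defining $\psi_M^{n,n-1}$ (the one with rows $Q^{n}_\bullet$, $Q^{n-1}_\bullet$, $P_\bullet^\ast$), apply the snake lemma, and identify the kernel term as the $(n-2)$nd syzygy of the transpose of $\Ext^n_R(M,R)$ by chasing the presentation $Q_n^n\to Q_2^n\to P_n^\ast\to P_{n+1}^\ast$ against a projective resolution of $\Ext_R^n(M,R)$; the hypothesis $\Ext^{n-1}_{R^{\mathrm{op}}}(\Ext^n_R(M,R),R)=0$ is precisely the condition making the dualized complex exact in the range needed for the snake lemma to close up.
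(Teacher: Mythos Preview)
Your induction approach has a genuine gap: the vanishing hypothesis does not transfer. To apply the inductive hypothesis to $\syz M$ with $n$ replaced by $n-1$, you would need $\Ext_{R^{\mathrm{op}}}^{(n-1)-1}(\Ext_R^{n-1}(\syz M,R),R)=0$, i.e.\ $\Ext_{R^{\mathrm{op}}}^{n-2}(\Ext_R^n(M,R),R)=0$. But the lemma only assumes $\Ext_{R^{\mathrm{op}}}^{n-1}(\Ext_R^n(M,R),R)=0$, a different Ext group; the parenthetical grade condition is merely an example, not the standing hypothesis. Even if you strengthened the hypothesis, applying $\tr\syz\tr$ to a short exact sequence is not automatic: $\tr$ is not exact, so each application requires its own surjectivity-of-dual condition, and these accumulate along the induction. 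Your proposed identification $\tr\syz\tr\syz^{n-3}\Ext^n(M,R)\approx\tr\syz^{n-2}\Ext^n(M,R)$ is also not true in general.

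The paper avoids all of this by not inducting. Your first instinct---reduce to $\psi^1_N$ for $N=\syz^{n-1}M$---is exactly right, and the paper carries it out directly. From the sequence $0\to P\to \J_1^2 N\oplus Q\xrightarrow{(\psi_N^1,s)} N\to 0$ of \cite[Proposition~2.21]{AB}, one application of \cite[Lemma~3.9]{AB} (the kernel being projective makes this clean) yields
\[
0\to\Ext_R^n(M,R)\to\tr\syz^{n-1}M\xrightarrow{t}T\to0,\qquad T\approx\syz\tr\syz^n M,\ \ t\approx\tr\psi^1_N.
\]
Now take $(n-1)$st syzygies \emph{all at once} via Remark~\ref{rmkst}(3)---syzygies do preserve short exact sequences up to projective summands---to obtain
\[
0\to\syz^{n-1}\tr\syz^{n-1}M\oplus Q_1\xrightarrow{u}U\to\syz^{n-2}\Ext_R^n(M,R)\oplus Q_2\to0,\qquad u\approx\syz^{n-1}t.
\]
The single hypothesis $\Ext_{R^{\mathrm{op}}}^{n-1}(\Ext_R^n(M,R),R)=0$ says precisely that $\Ext^1(\syz^{n-2}\Ext_R^n(M,R),R)=0$, hence $u^\ast$ is surjective, and a \emph{second} application of \cite[Lemma~3.9]{AB} produces the desired sequence with $\psi\approx\tr u\approx\tr\syz^{n-1}\tr\psi^1_{\syz^{n-1}M}$. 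Iterating Lemma~\ref{unit}(3) (and using $\tr\tr\approx\id$) identifies this with $\psi_M^{n,n-1}$. The point is that $\tr$ is applied exactly twice, with all the syzygy shifts bundled in between, so the vanishing hypothesis is invoked exactly once, at the one place it is needed.
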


\begin{proof}
 By \cite[Proposition 2.21]{AB}, there is an exact sequence $0\to P\to \J_1^2\syz^{n-1}M\oplus Q\xrightarrow{(\psi_{\syz^{n-1}M}^1, s)}\syz^{n-1}M\to0$, where $P$ and $Q$ projective.
Hence \cite[Lemma 3.9]{AB} gives a commutative diagram with exact rows
\small
$$
\xymatrix@R-1pc@C-1pc{
(\J_1^2 \syz^{n-1}M\oplus Q)^\ast\ar[rr]\ar@{=}[dd]&&P^\ast\ar[rr]\ar@{=}[dd]&&\Ext^1(\syz^{n-1}M,R)\ar[rr]&&\Ext^1(\J_1^2 \syz^{n-1}M,R)&&&  \\
&&&&&&&&&& \\
(\J_1^2 \syz^{n-1}M\oplus Q)^\ast\ar[rr]&&P^\ast\ar[rr]&&\tr\syz^{n-1}M\ar[rr]^-{t}&&T\ar[rr]&&\tr P\ar[rr]&&0,
}
$$
\normalsize
where $T\approx\tr \J^2_1\syz^{n-1}M\approx\syz\tr\syz^n M$ and $t\approx\tr\psi^1_{\syz^{n-1}M}$
, and so we obtain that $0\to\Ext^n(M,R)\to\tr\syz^{n-1}M\xrightarrow{t}T\to0$ is exact as $\tr P\approx0$ and $\Ext^1(\J_1^2 \syz^{n-1}M,R)=0$. Applying Remark \ref{rmkst}(3) to this repeatedly, we get an exact sequence
\begin{equation}\label{extex1}
\xymatrix@R-1pc@C-1pc{
0\ar[rr]&&\syz^{n-1}\tr\syz^{n-1}M\oplus Q_1\ar[rr]^-{u}&&U\ar[rr]&&\syz^{n-2}\Ext^n(M,R)\oplus Q_2\ar[rr]&&0,
}
\end{equation}
where $Q_1, Q_2$ are projective, $U\approx\syz^{n-1}T\approx\syz^n\tr\syz^n M$ and $u\approx\syz^{n-1}t\approx\syz^{n-1}\tr\psi^1_{\syz^{n-1}M}$.
Since $\Ext^1(\syz^{n-2}\Ext^n(M,R),R)\cong\Ext^{n-1}(\Ext^n(M,R),R)=0$, the $R$-dual map $u^\ast$ is surjective.
Applying \cite[Lemma 3.9]{AB} to (\ref{extex1}) gives an exact sequence $0\to E\to J\xrightarrow{\psi}J^\prime\to0$, where $E\approx\tr\syz^{n-2}\Ext_R^n(M,R)$, $J\approx \J_n^2 M$, $J^\prime\approx \J_{n-1}^2 M$ and $\psi\approx\tr u\approx\tr\syz^{n-1}\tr\psi_{\syz^{n-1}M}^1$.
As $\psi_M^{n,n-1}\approx\tr\syz^{n-1}\tr\psi_{\syz^{n-1}M}^1$ by Lemma \ref{unit}(3), we are done.
\end{proof}

Let $m,n$ be nonnegative integers.
By $\G_{m,n}$ we denote the full subcategory of $\mod R$ consisting of modules $M$ such that $\Ext^i_R(M,R)=0$ for all $1\le i\le m$ and $\Ext^j_{R^{\mathrm{op}}}(\tr M,R)=0$ for all $1\le j\le n$.

The results \cite[Proposition 2.26 and Corollary 2.32]{AB} describe the relationship between the grades of $\Ext$ modules and the torsionfreeness of syzygy modules, and the relationship of them with $\psi_M^i$ being represented by monomorphisms.
These results are the basis of the theories of spherical modules and Gorenstein dimension developed in \cite{AB}.
The main result of this section is the following theorem, which can be regarded as a higher version of those results.

\begin{thm}\label{gradethm}
Let $n\ge1$ and $j\ge0$ be integers and $M$ an $R$-module. 
The following are equivalent.
\begin{enumerate}[\rm(1)]
     \item
     The inequality $\grade_{R^{\mathrm{op}}}\Ext_R^i(M,R)\ge i+j-1$ holds for all $1\le i\le n$.
     \item
     The syzygy $\syz^i M$ is $i$-torsionfree and the map $\psi_M^i:\J_i^2 M\to M$ satisfies $(\t_j)$ for all $1\le i\le n$.
     \item
     The syzygy $\syz^i M$ is $i$-torsionfree and the map $\psi_M^{i,i-1}:\J_i^2 M\to \J_{i-1}^2 M$ satisfies $(\t_j)$ for all $1\le i\le n$.
\end{enumerate}
\end{thm}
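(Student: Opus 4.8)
The plan is to obtain $(2)\Leftrightarrow(3)$ at once and to prove $(1)\Leftrightarrow(3)$ by induction on $n$, treating the auxiliary parameter $j$ as variable. For $(2)\Leftrightarrow(3)$: both conditions contain the clause ``$\syz^iM$ is $i$-torsionfree for $1\le i\le n$'', and Lemma \ref{psicom} says that $\psi_M^{i,i-1}$ satisfies $(\t_j)$ for all $1\le i\le n$ exactly when $\psi_M^i$ does for all such $i$; so the two are equivalent. When $j=0$ the condition $(\t_0)$ is vacuous, so $(2)$ reduces to ``$\syz^iM$ is $i$-torsionfree for $1\le i\le n$'' and $(1)\Leftrightarrow(2)$ is precisely Corollary \ref{ABProp2.26}; hence I may assume $j\ge1$ from now on.

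Base case $n=1$. A first syzygy embeds in a free module, so $\syz M$ is automatically $1$-torsionfree, and by Lemma \ref{psicom} condition $(3)$ for $n=1$ just asserts that $\psi_M^1$ satisfies $(\t_j)$. By \cite[Proposition 2.21]{AB} there is an exact sequence $0\to P\to\J_1^2M\oplus Q\xrightarrow{f}M\to0$ with $P,Q$ projective and $f=(\psi_M^1,s)$; since the $s$-component factors through a projective, $f\approx\psi_M^1$, so by Remark \ref{bydef}(1) it suffices to decide when $f$ satisfies $(\t_j)$. Now $f$ is surjective with $\Ker f\cong P$ projective, hence $\Kerv f\approx\Ker f\approx0$ is $k$-torsionfree for every $k$; moreover $\J_1^2M=\tr L$ with $L$ a syzygy module, so $L$ is torsionless and $\Ext^1_R(\J_1^2M,R)=0$, whence $\Ker\Ext^1_R(f,R)=\Ext^1_R(M,R)$. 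Feeding this into Theorem \ref{main2} (the implications $(b_j)\wedge(c_j)\Rightarrow(a_j)$ and $(a_j)\wedge(b_{j+1})\Rightarrow(c_j)$, both applicable since every $(b_k)$ holds for $f$) yields: $\psi_M^1$ satisfies $(\t_j)$ if and only if $\grade_{R^{\mathrm{op}}}\Ext^1_R(M,R)\ge j$, which is exactly $(1)$ for $n=1$.

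Inductive step ($n\ge2$). Here $(1)$ for $(M,n,j)$ is the conjunction of $(1)$ for $(M,n-1,j)$ with $\grade_{R^{\mathrm{op}}}\Ext^n_R(M,R)\ge n+j-1$, and likewise $(3)$ for $(M,n,j)$ is $(3)$ for $(M,n-1,j)$ together with ``$\syz^nM$ is $n$-torsionfree and $\psi_M^{n,n-1}$ satisfies $(\t_j)$''; by the induction hypothesis the $(n-1)$-level halves coincide, so I must show, granting the common conditions, that the remaining clause is equivalent to $\grade_{R^{\mathrm{op}}}\Ext^n_R(M,R)\ge n+j-1$. The common conditions give $\grade_{R^{\mathrm{op}}}\Ext^i_R(M,R)\ge i-1$ for $1\le i\le n-1$, hence $\syz^iM$ is $i$-torsionfree for $1\le i\le n-1$ (Corollary \ref{ABProp2.26}), and so by the same corollary ``$\syz^nM$ is $n$-torsionfree'' is equivalent to $\grade_{R^{\mathrm{op}}}\Ext^n_R(M,R)\ge n-1$, which (as $j\ge1$) is implied by the target inequality. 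The remaining analysis of $\psi_M^{n,n-1}$ hinges on two points. First, $\J_{n-1}^2M=\tr L'$ with $L'=\syz^{n-1}(\tr\syz^{n-1}M)$ a syzygy module (here $n-1\ge1$), hence torsionless, so $\Ext^1_R(\J_{n-1}^2M,R)=0$ and therefore $\Ker\Ext^1_R(\psi_M^{n,n-1},R)=0$; thus condition $(c_k)$ of Theorem \ref{main2} holds automatically for $\psi_M^{n,n-1}$ and every $k$, and Theorem \ref{main2} collapses to the equivalence ``$\psi_M^{n,n-1}$ satisfies $(\t_j)$ $\Leftrightarrow$ $\Kerv\psi_M^{n,n-1}$ is $j$-torsionfree''. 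Second, when $\grade_{R^{\mathrm{op}}}\Ext^n_R(M,R)\ge n$ one has $\Ext^{n-1}_{R^{\mathrm{op}}}(\Ext^n_R(M,R),R)=0$, so Lemma \ref{extex} identifies $\Kerv\psi_M^{n,n-1}\approx\tr\syz^{n-2}\Ext^n_R(M,R)$; unwinding ``$\tr\syz^{n-2}\Ext^n_R(M,R)$ is $j$-torsionfree'', using that $\tr\tr N\approx N$ and the dimension shift for syzygies, turns it into the vanishing of $\Ext^k_{R^{\mathrm{op}}}(\Ext^n_R(M,R),R)$ for $n-1\le k\le n+j-2$, which in the presence of the lower vanishing is exactly $\grade_{R^{\mathrm{op}}}\Ext^n_R(M,R)\ge n+j-1$. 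This settles $(1)\Rightarrow(3)$ outright (there $\grade\ge n+j-1\ge n$) and gives $(3)\Rightarrow(1)$ as soon as we know $\grade_{R^{\mathrm{op}}}\Ext^n_R(M,R)\ge n$.

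The one real obstacle is closing that last gap: in the direction $(3)\Rightarrow(1)$ we only get $\grade_{R^{\mathrm{op}}}\Ext^n_R(M,R)\ge n-1$ from ``$\syz^nM$ is $n$-torsionfree'', whereas Lemma \ref{extex} needs the single further vanishing $\Ext^{n-1}_{R^{\mathrm{op}}}(\Ext^n_R(M,R),R)=0$. I expect to bridge it by exploiting that $\psi_M^{n,n-1}$ satisfies $(\t_j)$, hence $(\t_1)$, together with the always-available exact sequence $0\to\Ext^n_R(M,R)\to\tr\syz^{n-1}M\xrightarrow{t}T\to0$ (with $t\approx\tr\psi_{\syz^{n-1}M}^1$ and $T\approx\syz\tr\syz^nM$) constructed as in the proof of Lemma \ref{extex}, and the relation $\psi_M^{n,n-1}\approx\tr\syz^{n-1}t$ coming from Lemma \ref{unit}(3): one must transfer the monomorphism-representability of $\psi_M^{n,n-1}$ back through the functor $\tr\syz^{n-1}\tr$ to pin down the torsion of $\Ext^n_R(M,R)$. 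An alternative, perhaps cleaner, route is to run the whole induction through Lemma \ref{unit}(3), replacing $M$ by $\syz M$ and $j$ by $j+1$ at each step, so that the extra vanishing for $\Ext^n_R(M,R)\cong\Ext^{n-1}_R(\syz M,R)$ is furnished by the induction hypothesis applied to $\syz M$; either way, making the comparison between ``$g$ satisfies $(\t_{j+1})$'' and ``$\tr\syz\tr g$ satisfies $(\t_j)$'' work for the structure maps $g=\psi^{i,i-1}_{\bullet}$ is the delicate technical point.
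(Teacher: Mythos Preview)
Your treatment of $(2)\Leftrightarrow(3)$, the case $j=0$, the base case $n=1$, and the direction $(1)\Rightarrow(3)$ is essentially the same as the paper's and is correct. The genuine gap is exactly the one you flag yourself: in the direction $(3)\Rightarrow(1)$ you only obtain $\grade_{R^{\mathrm{op}}}\Ext^n_R(M,R)\ge n-1$ from the hypothesis that $\syz^nM$ is $n$-torsionfree, and Lemma~\ref{extex} needs the single further vanishing $\Ext^{n-1}_{R^{\mathrm{op}}}(\Ext^n_R(M,R),R)=0$ before you can identify $\Kerv\psi_M^{n,n-1}$. Your two proposed fixes are not arguments yet: the first (``transfer $(\t_1)$ back through $\tr\syz^{n-1}\tr$'') asks for a comparison between $(\t_j)$ for $g$ and for $\tr\syz^{n-1}\tr g$ which is not available in the paper and is not obviously true without extra hypotheses; the second (shift to $\syz M$ and $j+1$) runs into the same problem, since you would need ``$\psi_{\syz M}^{i,i-1}$ satisfies $(\t_{j+1})$'' from ``$\psi_M^{i+1,i}$ satisfies $(\t_j)$'', and Lemma~\ref{unit}(3) only gives the relation $\psi_M^{i+1,i}\approx\tr\syz\tr\psi_{\syz M}^{i,i-1}$, not a clean transfer of $(\t_\bullet)$ across $\tr\syz\tr$.

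The paper closes the gap by a different induction: it inducts on $j$ (for all $1\le i\le n$ simultaneously) rather than on $n$. For $j\ge 2$ the induction hypothesis already gives $\grade_{R^{\mathrm{op}}}\Ext^i_R(M,R)\ge i+j-2\ge i$, so Lemma~\ref{extex} applies and your computation goes through. The real content is the base case $j=1$, which the paper proves by a separate trick: using \cite[Proposition~2.21]{AB} one has $\Kerv\syz^{i-1}\psi_M^i$ projective, and one shows that $\syz^{i-1}\psi_M^i$ satisfies $(\t_i)$. The point is that $\syz^{i-1}M$ and $\syz^{i-1}\J_i^2M$ are both $(i-1)$-torsionfree, so $\Ext^k(\tr\syz^{i-1}\psi_M^i,R)$ is automatically bijective for $k\le i-1$; for $k=i$ one uses the naturality square for $\psi^{i-1}_{(-)}$ together with the fact that $\psi^{i-1}_{\J_i^2M}$ is a stable isomorphism (since $\J_i^2M\in\G_{i,0}$) to deduce that $\J_{i-1}^2\psi_M^i$ satisfies $(\t_1)$, and then the identification $\tr\J_{i-1}^2\psi_M^i\approx\syz^{i-1}\tr\syz^{i-1}\psi_M^i$ yields injectivity of $\Ext^i(\tr\syz^{i-1}\psi_M^i,R)$. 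Theorem~\ref{main2} applied to $\syz^{i-1}\psi_M^i$ then gives $\grade_{R^{\mathrm{op}}}\Ext^i_R(M,R)\ge i$ directly, with no appeal to Lemma~\ref{extex}. This $j=1$ argument is the missing idea in your proposal.
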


\begin{proof}
The assertion of the theorem is the same as Corollary \ref{ABProp2.26} if $j=0$.
Thus we may assume that $j>0$.
Lemma \ref{psicom} deduces the equivalence $(2)\Leftrightarrow(3)$. 
We prove the remaining part of the theorem step by step.

(i) First we deal with the case $n=1$. 
By \cite[Proposition 2.21]{AB}, there exists an exact sequence $0\to P\to \J_1^2 M\oplus Q\xrightarrow{f}M\to0$ with $P, Q$ projective such that $f\approx \psi_M^1$.
Therefore, $\psi_M^1$ satisfies $(\t_j)$ if and only if $\grade\Ker\Ext^1(\psi_M^1,R)\ge j$ by Theorem \ref{main2}.
As $\grade\Ker\Ext^1(\psi_M^1,R)=\grade\Ext^1(M,R)$ by $\Ext^1(\J_1^2 M,R)=0$, the assertion follows.

(ii) We prove the implication $(1)\Rightarrow(3)$.
Fix an integer $1\le i\le n$.
Since $\syz^i M$ is $i$-torsionfree by Corollary \ref{ABProp2.26}, it suffices to prove that $\psi_M^{i,i-1}$ satisfies $(\t_j)$.
As the assertion holds for $i=1$ by (i), we may assume that $i>1$.
Since $\grade\Ext^i(M,R)\ge i+j-1\ge i\ge2$, $\Kerv \psi_M^{i,i-1}\approx\tr\syz^{i-2}\Ext^i(M,R)$ by Lemma \ref{extex} and Remark \ref{rmkst}.
Now $\Ext^i(M,R)\in\G_{i+j-2,0}$ by the assumption.
Thus $\tr\syz^{i-2}\Ext^i(M,R)\in\G_{i-2,j}$ by \cite[Proposition 1.1.1]{I}, that is, $\Kerv\psi_M^{i,i-1}$ is $j$-torsionfree.
Moreover, $\grade\Ker\Ext^1(\psi_M^{i,i-1},R)=\infty$ as $\Ext^1(\J_{i-1}^2 M,R)=0$.
It follows from Theorem \ref{main2} that $\psi_M^{i,i-1}$ satisfies $(\t_j)$.

(iii) Finally, we show that the implication $(2)\Rightarrow(1)$ holds.
Recall that $(2)$ is equivalent to $(3)$.

(iii-a) Suppose $j=1$. 
By \cite[Proposition 2.21]{AB}, there exists an exact sequence $0\to B\to \J_i^2 M\oplus P\xrightarrow{(\psi_M^i,s)}M\to0$ such that $P$ is projective and $B$ has projective dimension at most $i-1$.
Hence there exists an exact sequence $0\to\syz^{i-1}B\to J\xrightarrow{f}\syz^{i-1}M\to0$ such that $J\approx\syz^{i-1}\J^2_{i}M$ and $f\approx\syz^{i-1}\psi^i_{M}$.
Thus $\Kerv\syz^{i-1}\psi_M^i$ is projective.
We establish a claim.

\begin{claim}
The map $\syz^{i-1}\psi_M^i$ satisfies $(\t_i)$.
\end{claim}

\begin{proof}[Proof of Claim]
The assumption and \cite[Proposition 1.1.1]{I} deduce that $\syz^{i-1}M$ and $\syz^{i-1}\J_i^2 M$ are $(i-1)$-torsionfree.
In particular, $\Ext^k(\tr\syz^{i-1}\psi_M^i,R)$ is bijective for all $1\le k\le i-1$.
We consider the following commutative diagram in $\lmod R$, which is obtained from the naturality of $\psi_{(-)}^{i-1}$:
$$
\xymatrix@R-1pc@C-1pc{
\J_{i-1}^2 \J_i^2 M\ar[rrr]^-{\underline{\psi_{\J_i^2 M}^{i-1}}}\ar[dd]^{\underline{\J_{i-1}^2\psi_M^i}}&&&\J_{i}^2 M\ar[dd]^{\underline{\psi_M^{i}}} \\
&&& \\
\J_{i-1}^2 M\ar[rrr]^-{\underline{\psi_M^{i-1}}}&&&M.
}
$$
By \cite[Proposition 1.1.1]{I}, we have $\J_i^2 M\in\G_{i,0}$, and thus $\psi_{\J_i^2 M}^{i-1}$ is a stable isomorphism.
Since $\psi_M^i$ satisfies $(\t_1)$, it follows from Remark \ref{bydef}(2) that $\J_{i-1}^2\psi_M^i$ satisfies $(\t_1)$.
Namely $\Ext^1(\tr \J^2_{i-1}\psi^i_M, R)$ is injective.
As $\tr \J^2_{i-1}\psi^i_M\approx \syz^{i-1}\tr\syz^{i-1}\psi^i_M$, $\Ext^i(\tr\syz^{i-1}\psi_M^i,R)$ is injective,
and so $\syz^{i-1}\psi_M^i$ satisfies $(\t_i)$.
\renewcommand{\qedsymbol}{$\square$}
\end{proof}

According to the above claim and Theorem \ref{main2}, we get
$$
i\le\grade\Ker(\Ext^1(\syz^{i-1}\psi_M^i,R):\Ext^1(\syz^{i-1}M,R)\to\Ext^1(\syz^{i-1}\J_i^2 M,R))=\grade\Ext^i(M,R),
$$
where the second equality is obtained as $\J_i^2 M\in\G_{i,0}$.
We get the conclusion.

(iii-b) We prove by induction on $j$ that $\grade\Ext^i(M,R)\ge i+j-1$ for all $1\le i\le n$.
We know that this holds true for $j=1$. 
Let $j>1$ and fix an integer $1\le i\le n$.
If $i=1$, then the claim follows from (i).
Thus we may assume that $i>1$.
Then $\grade\Ext^i(M,R)\ge i+(j-1)-1\ge i\ge2$ by the induction hypothesis, and so $\Kerv\psi_M^{i,i-1}\approx\tr\syz^{i-2}\Ext^i(M,R)$ by Lemma \ref{extex}.
Now $\psi_M^{i,i-1}$ satisfies $(\t_j)$ and $\grade\Ker\Ext^1(\psi_M^{i,i-1},R)=\infty$ as $\Ext^1(\J_{i-1}^2 M,R)=0$.
Because of Theorem \ref{main2}, $\tr\syz^{i-2}\Ext^i(M,R)$ is $j$-torsionfree.
Hence $0=\Ext^j(\tr\tr\syz^{i-2}\Ext^i(M,R),R)\cong\Ext^{i+j-2}(\Ext^i(M,R),R)$, and we conclude that $\grade\Ext^i(M,R)\ge i+j-1$.
\end{proof}

The following corollary deduces \cite[Proposition 2.28]{AB}.

\begin{cor}\label{gradethmcor}
Let $n\ge1$ and $j\ge1$ be integers and $M$ an $R$-module.
Suppose that $\grade_{R^{\mathrm{op}}}\Ext_R^i(M,R)\ge i+j-1$ for all $1\le i\le n$.
Then there are isomorphisms
$$
\Ext_{R^{\mathrm{op}}}^{n+k}(\tr\syz^n M,R)\cong\Ext_{R^{\mathrm{op}}}^{n-1+k}(\tr\syz^{n-1} M,R)\cong\cdots\cong\Ext_{R^{\mathrm{op}}}^{1+k}(\tr\syz M,R)\cong\Ext_{R^{\mathrm{op}}}^k(\tr M,R)
$$
for all $1\le k\le j-1$, there are monomorphisms
$$
\Ext_{R^{\mathrm{op}}}^{n+j}(\tr\syz^n M,R)\hookrightarrow\Ext_{R^{\mathrm{op}}}^{n-1+j}(\tr\syz^{n-1} M,R)\hookrightarrow\cdots\hookrightarrow\Ext_{R^{\mathrm{op}}}^{1+j}(\tr\syz M,R)\hookrightarrow\Ext_{R^{\mathrm{op}}}^j(\tr M,R)
$$
and the following implications hold.
\footnotesize
\begin{equation*}
\xymatrix@R-1pc@C-1pc{
M\in\tf_j(R)\ar@{=>}[d]&\ar@{=>}[r]&&\scalebox{0.7}[1]{$\syz M$}\in\tf_{j+1}(R)\ar@{=>}[d]&\ar@{=>}[r]&\cdots\cdots\ar@{=>}[r]&&\scalebox{0.7}[1]{$\syz^{n-1}M$}\in\tf_{n-1+j}(R)\ar@{=>}[d]&\ar@{=>}[r]&&\scalebox{0.7}[1]{$\syz^n M$}\in\tf_{n+j}(R)\ar@{=>}[d] \\
M\in\tf_{j-1}(R)\ar@{=>}[d]&\ar@{<=>}[r]&&\scalebox{0.7}[1]{$\syz M$}\in\tf_j(R)\ar@{=>}[d]&\ar@{<=>}[r]&\cdots\cdots\ar@{<=>}[r]&&\scalebox{0.7}[1]{$\syz^{n-1}M$}\in\tf_{n-2+j}(R)\ar@{=>}[d]&\ar@{<=>}[r]&&\scalebox{0.7}[1]{$\syz^n M$}\in\tf_{n-1+j}(R)\ar@{=>}[d] \\
\scalebox{1}[0.5]{$\vdots$}\ar@{=>}[d]&&&\scalebox{1}[0.5]{$\vdots$}\ar@{=>}[d]&&\cdots\cdots\cdots&&\scalebox{1}[0.5]{$\vdots$}\ar@{=>}[d]&&&\scalebox{1}[0.5]{$\vdots$}\ar@{=>}[d] \\
M\in\tf_1(R)&\ar@{<=>}[r]&&\scalebox{0.7}[1]{$\syz M$}\in\tf_2(R)&\ar@{<=>}[r]&\cdots\cdots\ar@{<=>}[r]&&\scalebox{0.7}[1]{$\syz^{n-1}M$}\in\tf_n(R)&\ar@{<=>}[r]&&\scalebox{0.7}[1]{$\syz^n M$}\in\tf_{n+1}(R)
}
\end{equation*}
\normalsize
\end{cor}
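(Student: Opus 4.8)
The plan is to reduce everything to Theorem \ref{gradethm} and a dimension-shift argument. By the equivalence of conditions (1) and (3) in Theorem \ref{gradethm}, the hypothesis of the corollary is equivalent to saying that, for every $1\le i\le n$, the syzygy $\syz^i M$ is $i$-torsionfree and the map $\psi_M^{i,i-1}:\J_i^2 M\to\J_{i-1}^2 M$ satisfies $(\t_j)$, where we read $\J_0^2 M\approx M$ and $\psi_M^{1,0}\approx\psi_M^1$. I assume these two facts from now on.

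The next step is to rewrite "$\psi_M^{i,i-1}$ satisfies $(\t_j)$" in terms of $\Ext$ modules of transposes of syzygies. Since $\J_i^2 M=\tr\syz^i\tr\syz^i M$ and $\tr\tr N\approx N$ for every module $N$, one has $\tr\J_i^2 M\approx\syz^i\tr\syz^i M$ in $\lmod R^{\mathrm{op}}$, and hence a dimension-shift isomorphism $\Ext_{R^{\mathrm{op}}}^k(\tr\J_i^2 M,R)\cong\Ext_{R^{\mathrm{op}}}^{i+k}(\tr\syz^i M,R)$ for all $k\ge1$. Under these identifications, $\Ext_{R^{\mathrm{op}}}^k(\tr\psi_M^{i,i-1},R)$ becomes a homomorphism
$$\rho_i^k:\Ext_{R^{\mathrm{op}}}^{i+k}(\tr\syz^i M,R)\longrightarrow\Ext_{R^{\mathrm{op}}}^{i-1+k}(\tr\syz^{i-1}M,R),$$
and by the definition of $(\t_j)$ the condition "$\psi_M^{i,i-1}$ satisfies $(\t_j)$" says precisely that $\rho_i^k$ is bijective for $1\le k\le j-1$ and injective for $k=j$. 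The relation $\underline{\psi_M^i}=\underline{\psi_M^{i-1}}\,\underline{\psi_M^{i,i-1}}$ of Lemma \ref{unit}(2) guarantees that these identifications are compatible enough that the target of $\rho_i^k$ is the source of $\rho_{i-1}^k$; so for each fixed $k$ the composite $\rho_1^k\circ\cdots\circ\rho_n^k:\Ext_{R^{\mathrm{op}}}^{n+k}(\tr\syz^n M,R)\to\Ext_{R^{\mathrm{op}}}^{k}(\tr M,R)$ is defined, and it is an isomorphism when $1\le k\le j-1$ and a monomorphism when $k=j$. Taking $k=1,\dots,j-1$ and then $k=j$ gives exactly the chains of isomorphisms and of monomorphisms asserted in the corollary.

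For the implications diagram, I would combine the maps $\rho_i^k$ with the $i$-torsionfreeness of $\syz^i M$ (which is part of Theorem \ref{gradethm}(3); cf.\ Corollary \ref{ABProp2.26}), giving $\Ext_{R^{\mathrm{op}}}^k(\tr\syz^i M,R)=0$ for $1\le k\le i$. Hence, for $0\le i\le n$ and $1\le l\le j$, the statement $\syz^i M\in\tf_{i+l}(R)$ amounts to the vanishing of $\Ext_{R^{\mathrm{op}}}^k(\tr\syz^i M,R)$ for $i+1\le k\le i+l$ only. Feeding these groups through $\rho_i^1,\dots,\rho_i^l$ --- isomorphisms throughout when $l\le j-1$, and isomorphisms for the lower indices with a monomorphism at the top index when $l=j$ --- turns this into the vanishing of $\Ext_{R^{\mathrm{op}}}^k(\tr\syz^{i-1}M,R)$ for $i\le k\le i-1+l$, i.e.\ into $\syz^{i-1}M\in\tf_{i-1+l}(R)$. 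This produces the equivalence $\syz^{i-1}M\in\tf_{i-1+l}(R)\Leftrightarrow\syz^i M\in\tf_{i+l}(R)$ for $1\le l\le j-1$ and the one-way implication $\syz^{i-1}M\in\tf_{i-1+j}(R)\Rightarrow\syz^i M\in\tf_{i+j}(R)$; the vertical arrows are the trivial inclusions $\tf_m(R)\subseteq\tf_{m-1}(R)$, and chaining the horizontal relations for $i=1,\dots,n$ assembles the whole diagram.

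The step I expect to be the main obstacle is the bookkeeping in the second paragraph: making the identification $\tr\J_i^2 M\approx\syz^i\tr\syz^i M$ explicit and checking that, after the dimension shift, $\Ext_{R^{\mathrm{op}}}^k(\tr\psi_M^{i,i-1},R)$ genuinely becomes a well-defined map between the stated $\Ext$ groups which, thanks to Lemma \ref{unit}(2), is natural enough to be composed --- so that the telescoping in the displayed isomorphisms and monomorphisms is legitimate. Everything after that is routine long-exact-sequence manipulation together with the definition of $n$-torsionfreeness.
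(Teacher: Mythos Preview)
Your proposal is correct and follows essentially the same route as the paper's proof: invoke Theorem \ref{gradethm} to obtain that each $\psi_M^{i,i-1}$ satisfies $(\t_j)$ and that $\syz^i M$ is $i$-torsionfree, then use the stable identification $\tr\J_i^2 M\approx\syz^i\tr\syz^i M$ to rewrite $\Ext^k(\tr\psi_M^{i,i-1},R)$ as a map between $\Ext^{i+k}(\tr\syz^i M,R)$ and $\Ext^{i-1+k}(\tr\syz^{i-1}M,R)$, from which the chains of isomorphisms, monomorphisms, and the implication diagram follow. The paper's proof is terser and does not dwell on the composability issue you flag, but the argument is the same.
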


\begin{proof}
Let $1\le i\le n$ be an integer.
Note that $\tr \J^2_i M\approx\syz^i\tr\syz^i M$.
As $\psi_M^{i,i-1}$ satisfies $(\t_j)$, it follows from Theorem \ref{gradethm} that $\Ext^k(\tr\psi_M^{i,i-1},R):\Ext^k(\tr \J_i^2 M,R)\to \Ext^k(\tr\J_{i-1}^2 M,R)$ is an isomorphism for all $1\le k\le j-1$ and $\Ext^j(\tr\psi_M^{i,i-1},R):\Ext^j(\tr\J_i^2 M,R)\to \Ext^j(\tr\J_{i-1}^2 M,R)$ is injective.
Hence we have the desired isomorphisms and monomorphisms.
Since $\syz^i M$ is $i$-torsionfree for all $1\le i\le n$, we also get the desired implications.
\end{proof}

The following corollary provides a refinement of the implication $(1)\Rightarrow(2)$ in Corollary \ref{ABProp2.26}, which is the same as \cite[Proposition 2.26]{AB}.
In fact, it is obtained by letting $p=2$ in the following corollary.

\begin{cor}\label{gradepq}
Let $M$ be an $R$-module and $p,g$ integers such that $2\le p\le q$.
If $\grade_{R^{\mathrm{op}}}\Ext_R^i(M,R)\ge i-1$ for all $p\le i\le q$, then the following implications hold.
$$
\syz^{p-1}M:(p-1)\text{-torsionfree}\Longrightarrow\syz^p M:p\text{-torsionfree}\Longrightarrow\cdots\Longrightarrow\syz^q M:q\text{-torsionfree}.
$$
\end{cor}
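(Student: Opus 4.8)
The plan is to reduce the chain to a single generic implication and then run essentially the same argument as in the proof of Corollary~\ref{ABProp2.26}, only now feeding in the sharper hypothesis $\grade_{R^{\mathrm{op}}}\Ext_R^i(M,R)\ge i-1$ for the one index $i$ at hand rather than for all indices simultaneously. Concretely, it suffices to prove: for each integer $i$ with $p\le i\le q$, if $\syz^{i-1}M$ is $(i-1)$-torsionfree, then $\syz^i M$ is $i$-torsionfree. Applying this successively for $i=p,p+1,\dots,q$ yields exactly the displayed chain of implications.

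So fix such an $i$ and assume $\syz^{i-1}M\in\tf_{i-1}(R)$. Choose an epimorphism $f\colon P\to\syz^{i-1}M$ with $P$ projective. Since $f$ is surjective, Remark~\ref{rmkst}(2) gives $\Kerv f\approx\Ker f\approx\syz^i M$, and $n$-torsionfreeness is insensitive to projective summands because $\tr$ annihilates projectives. As $P$ is projective it is $i$-torsionfree, so the implication $(b_i)\wedge(c_{i-1})\Rightarrow(a_i)$ of Remark~\ref{bydef}(4), applied with $X=P$ and $Y=\syz^{i-1}M$, shows that $f$ satisfies $(\t_i)$.

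Next, since $P$ is projective we have $\Ext_R^1(P,R)=0$, hence $\Ker\Ext_R^1(f,R)=\Ext_R^1(\syz^{i-1}M,R)\cong\Ext_R^i(M,R)$, whose grade over $R^{\mathrm{op}}$ is at least $i-1$ by assumption. Thus condition $(c_{i-1})$ of Theorem~\ref{main2} holds for $f$; combined with $(a_i)$ just established, the implication $(a_i)\wedge(c_{i-1})\Rightarrow(b_i)$ of Theorem~\ref{main2} gives that $\Kerv f\approx\syz^i M$ is $i$-torsionfree, as desired.

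I do not anticipate any real obstacle. The only points requiring care are the bookkeeping up to projective summands (identifying $\Kerv f$, $\Ker f$, and $\syz^i M$, and checking that $n$-torsionfreeness passes along these identifications) and the inequality $i-1\ge0$, which is guaranteed precisely by $i\ge p\ge2$; the latter is also what makes the grade condition $(c_{i-1})$ a genuine hypothesis and allows Theorem~\ref{main2} to be invoked with a legitimate index.
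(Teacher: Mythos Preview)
Your proof is correct. It is essentially the same argument the paper uses to prove Corollary~\ref{ABProp2.26}: take an epimorphism $f\colon P\to\syz^{i-1}M$, verify $(\t_i)$ for $f$ from Remark~\ref{bydef}(4), compute $\Ker\Ext^1(f,R)\cong\Ext^i(M,R)$, and read off the $i$-torsionfreeness of $\syz^iM\approx\Kerv f$ from the implication $(a_i)\wedge(c_{i-1})\Rightarrow(b_i)$ of Theorem~\ref{main2}.

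The paper, however, does not reprove this; it deduces Corollary~\ref{gradepq} from the heavier Corollary~\ref{gradethmcor} (and hence ultimately from Theorem~\ref{gradethm}) by setting $j=p-1$, $n=q-p+1$, $N=\syz^{p-1}M$, noting that the hypothesis becomes $\grade\Ext^k(N,R)\ge k+j-1$ for $1\le k\le n$, and then reading off the top row of the implication diagram in Corollary~\ref{gradethmcor}. Your route is more elementary and self-contained, needing only Section~3 material; the paper's route is a one-line appeal to a result that already packages much more information (the full diagram of Corollary~\ref{gradethmcor}), which explains why the corollary is placed after Theorem~\ref{gradethm} rather than right after Corollary~\ref{ABProp2.26}.
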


\begin{proof}
Let $j=p-1$, $n=q-p+1$, and $N=\syz^j M$.
Then $n, j\ge1$ and the assumption means that $\grade\Ext^k(N,R)\ge k+j-1$ for all $1\le k\le n$.
Applying Corollary \ref{gradethmcor} to $N$ implies the assertion.
\end{proof}

\section{Gorenstein properties}

In this section, we provide a characterization of commutative Gorenstein rings by using results given in the previous section. Throughout this section, we assume that $R$ is commutative.

First, we consider the case where the Ext modules have finite length. 
The following proposition holds.

\begin{prop}\label{extlength}
Suppose that $R$ is local and with depth $t$.
Let $M$ be an $R$-module.
Suppose that $\Ext_R^i(M,R)$ has finite length for all $1\le i\le t$.
Then the following statements hold.
\begin{enumerate}[\rm(1)]
    \item
    The maps $\psi_M^{i,i-1}:\J_i^2 M\to \J_{i-1}^2 M$ and $\psi_M^i:\J_i^2 M\to M$ satisfy $(\t_1)$ for all $1\le i\le t$.
    \item
    Suppose that $\Ext_R^{t+1}(M,R)$ also has finite length.
    Then $\Ext_R^{t+1}(M,R)=0$ if and only if $\psi_M^{t+1,t}:\J_{t+1}^2 M\to \J_t^2 M$ satisfies $(\t_1)$, if and only if $\psi_M^{t+1}:\J^2_{t+1}M\to M$ satisfies $(\t_1)$.
\end{enumerate}
\end{prop}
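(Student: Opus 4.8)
The plan is to derive everything from Theorem~\ref{gradethm} with $j=1$, combined with the standard fact that a finitely generated $R$-module $N$ of finite length satisfies $\grade_R N=\depth R=t$ when $N\neq 0$ and $\grade_R N=\infty$ when $N=0$ (see \cite{Mat}); in particular $\grade_R N\ge t$ whenever $N$ has finite length.

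For part (1) (which is vacuous if $t=0$): the hypothesis gives that $\Ext_R^i(M,R)$ has finite length, hence $\grade_R\Ext_R^i(M,R)\ge t\ge i$, for every $1\le i\le t$. Thus condition (1) of Theorem~\ref{gradethm} holds for $n=t$ and $j=1$, and the equivalences $(1)\Leftrightarrow(2)\Leftrightarrow(3)$ there yield that $\syz^i M$ is $i$-torsionfree and that $\psi_M^i$ and $\psi_M^{i,i-1}$ satisfy $(\t_1)$ for all $1\le i\le t$. For the equivalences in part (2), Lemma~\ref{psicom} applied with $m=t+1$ and $n=1$, together with part (1), shows that $\psi_M^{t+1,t}$ satisfies $(\t_1)$ if and only if $\psi_M^{t+1}$ does; so it remains to prove that $\Ext_R^{t+1}(M,R)=0$ if and only if $\psi_M^{t+1}$ satisfies $(\t_1)$. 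If $\Ext_R^{t+1}(M,R)=0$, then $\grade_R\Ext_R^{t+1}(M,R)=\infty\ge t+1$, which together with the bounds from (1) gives $\grade_R\Ext_R^i(M,R)\ge i$ for all $1\le i\le t+1$, and Theorem~\ref{gradethm} with $n=t+1$ and $j=1$ forces $\psi_M^{t+1}$ and $\psi_M^{t+1,t}$ to satisfy $(\t_1)$.

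The converse is the one step requiring a little care. Assuming $\psi_M^{t+1}$ satisfies $(\t_1)$, I want to invoke the implication $(2)\Rightarrow(1)$ of Theorem~\ref{gradethm} for $n=t+1$ and $j=1$, and for this I must first know that $\syz^{t+1}M$ is $(t+1)$-torsionfree. This is where the finite-length hypothesis on $\Ext_R^{t+1}(M,R)$ enters: it gives $\grade_R\Ext_R^{t+1}(M,R)\ge t=(t+1)-1$, while $\grade_R\Ext_R^i(M,R)\ge t\ge i-1$ for $1\le i\le t$ by part (1); hence $\grade_R\Ext_R^i(M,R)\ge i-1$ for all $1\le i\le t+1$, and Corollary~\ref{ABProp2.26} shows that $\syz^i M$ is $i$-torsionfree for all $1\le i\le t+1$. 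Combined with part (1) and the standing assumption, $\psi_M^i$ satisfies $(\t_1)$ for all $1\le i\le t+1$, so condition (2) of Theorem~\ref{gradethm} holds for $n=t+1$ and $j=1$, giving $\grade_R\Ext_R^i(M,R)\ge i$ for all $1\le i\le t+1$. In particular $\grade_R\Ext_R^{t+1}(M,R)\ge t+1>t=\depth R$, which is impossible for a nonzero finite-length module; hence $\Ext_R^{t+1}(M,R)=0$. The argument is essentially bookkeeping around Theorem~\ref{gradethm}; the only subtlety, which I would flag, is that the finite-length hypothesis on $\Ext_R^{t+1}(M,R)$ plays two roles in this last direction — once via $\grade\ge t$ to secure the torsionfreeness of $\syz^{t+1}M$ needed even to apply Theorem~\ref{gradethm}, and once via $\grade=t$ for nonzero modules to upgrade the resulting inequality to genuine vanishing.
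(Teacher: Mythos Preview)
Your proof is correct and follows essentially the same approach as the paper: both derive everything from Theorem~\ref{gradethm} with $j=1$, using that finite-length modules over a local ring of depth $t$ have grade at least $t$ (the paper cites \cite[Proposition~1.2.10]{BH} for this), and both obtain the torsionfreeness of $\syz^{t+1}M$ needed for the converse in (2) from Corollary~\ref{ABProp2.26} via the inequality $\grade\Ext^i(M,R)\ge i-1$. Your write-up is simply more explicit about the bookkeeping (in particular the appeal to Lemma~\ref{psicom}), which the paper absorbs into the phrase ``the assertion follows from (1) and Theorem~\ref{gradethm}.''
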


\begin{proof}
(1) By \cite[Proposition 1.2.10]{BH}, we have $\grade\Ext^i(M,R)\ge t$ for all $1\le i\le t$.
The assertion follows from Theorem \ref{gradethm}.

(2) The assumption deduces that $\grade\Ext^i(M,R)\ge t$ for all $1\le i\le t+1$.
In particular, $\syz^i M$ is $i$-torsionfree for all $1\le i\le t+1$.
We have $\Ext^{t+1}(M,R)=0$ if and only if $\grade\Ext^i(M,R)\ge i$ for all $1\le i\le t+1$.
The assertion follows from (1) and Theorem $\ref{gradethm}$.
\end{proof}

Proposition \ref{extlength} can be strengthened in the case where $M$ is the residue field of the local ring $R$, as in the proposition below.
It also provides a criterion for $R$ to be Gorenstein in terms of morphisms represented by monomorphisms.

\begin{prop}\label{psik}
Suppose that $R$ is local and with depth $t$.
Let $k$ be the residue field of $R$.
\begin{enumerate}[\rm(1)]
    \item
    The maps $\psi_k^{i,i-1}:\J_i^2 k\to \J_{i-1}^2 k$ and $\psi_k^i:\J_i^2 k\to k$ are stable isomorphisms for all $1\le i\le t-1$.
    \item
    If $t>0$, then $\Ext_R^{i+1}(\tr\syz^i k,R)\cong k$ for all $0\le i\le t-1$ and $\Ext_R^{t+1}(\tr\syz^t k,R)=0$.
    In particular, $\psi_k^{t,t-1}:\J_{t}^2 k\to \J_{t-1}^2 k$ and $\psi_k^t:\J^2_t k\to k$ satisfy $(\t_1)$, but do not satisfy $(\t_2)$.
    \item
    The following are equivalent.
    \begin{enumerate}[\rm(i)]
        \item
        The ring $R$ is Gorenstein.
        \item
        The map $\psi^i_M:\J^2_i M\to M$ satisfies $(\t_1)$ for all $R$-modules $M$ and for all $i\ge1$.
        \item
        The map $\psi^{t+1}_M:\J^2_{t+1} M\to M$ satisfies $(\t_1)$ for all $R$-modules $M$.
        \item
        The map $\psi_k^{t+1}:\J^2_{t+1} k\to k$ satisfies $(\t_1)$.
        \item
        The map $\psi_k^{t+1,t}:\J^2_{t+1} k\to \J^2_t k$ satisfies $(\t_1)$.
    \end{enumerate}
\end{enumerate}
\end{prop}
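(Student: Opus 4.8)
The plan is to run everything through Theorem~\ref{gradethm}, together with the elementary fact that, since $R$ is local with residue field $k$ and $\depth R=t$, one has $\Ext_R^i(k,R)=0$ for $1\le i\le t-1$ while $\Ext_R^t(k,R)$ is a nonzero module of finite length; more generally every $\Ext_R^i(k,R)$ has finite length, hence grade $t$ unless it vanishes.

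For (1): because $\Ext_R^i(k,R)=0$ for $1\le i\le t-1$, condition $(1)$ of Theorem~\ref{gradethm} holds with $n=t-1$ for \emph{every} $j$, so $\syz^i k$ is $i$-torsionfree and $\psi_k^i$ satisfies $(\t_n)$ for all $n$ whenever $1\le i\le t-1$. To upgrade this to ``stable isomorphism'' I would use \cite[Proposition 2.21]{AB} to present $\psi_k^i$, up to $\approx$, as an epimorphism $\J_i^2 k\oplus P\twoheadrightarrow k$ with $P$ projective and with kernel $\approx\Kerv\psi_k^i$ of projective dimension at most $i-1$; by Corollary~\ref{syztrf} (applied for all $n$) the module $\Kerv\psi_k^i$ is a syzygy of arbitrarily high order, hence of depth $\ge t=\depth R$ by the depth lemma, hence free by the Auslander--Buchsbaum formula. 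Since $\Ext_R^1(k,R)=0$ makes the condition on the $R$-dual automatic, Lemma~\ref{stisolem}(1) then shows this epimorphism, and therefore $\psi_k^i$, is a stable isomorphism. For $\psi_k^{i,i-1}$ I would use $\underline{\psi_k^i}=\underline{\psi_k^{i-1}}\,\underline{\psi_k^{i,i-1}}$ (Lemma~\ref{unit}(2)) and induct on $i$, starting from the stable isomorphism $\psi_k^0\colon\tr\tr k\to k$.

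For (2): the isomorphisms $\Ext_R^{i+1}(\tr\syz^i k,R)\cong k$ for $0\le i\le t-1$ I would get by telescoping. For $1\le m\le t-1$ one has $\Ext_R^m(k,R)=0$, i.e. $\Ext_R^1(\syz^{m-1}k,R)=0$, so from a short exact sequence $0\to\syz^m k\to F\to\syz^{m-1}k\to0$ with $F$ free one extracts (as in the proof of Theorem~\ref{main1}, via \cite[Lemma 3.9]{AB}) a short exact sequence $0\to\tr\syz^{m-1}k\to F'\to\tr\syz^m k\to0$ with $F'$ free, whence $\Ext_R^j(\tr\syz^m k,R)\cong\Ext_R^{j-1}(\tr\syz^{m-1}k,R)$ for $j\ge2$; iterating down to $\Ext_R^1(\tr k,R)=\Ker\varphi_k=k$ (here $k^\ast=0$ as $\depth R>0$) gives the claim. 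The vanishing $\Ext_R^{t+1}(\tr\syz^t k,R)=0$ is the delicate point and is where I expect the real difficulty. Using $\tr\J_t^2 k\approx\syz^t\tr\syz^t k$ and \cite[Proposition 2.6]{AB} one has $\Ext_R^{t+1}(\tr\syz^t k,R)\cong\Ext_R^1(\tr\J_t^2 k,R)=\Ker\varphi_{\J_t^2 k}$, and Corollary~\ref{gradethmcor} (with $n=t$, $j=1$, which applies since $\grade_{R^{\mathrm{op}}}\Ext_R^i(k,R)\ge i$ for $1\le i\le t$) embeds this module in $k$, so it is $0$ or $\cong k$ and one must exclude the latter. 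For this I would feed the exact sequence $0\to\Ext_R^t(k,R)\to\tr\syz^{t-1}k\to C\to0$, with $C$ a first syzygy of $\tr\syz^t k$ (obtained as in Lemma~\ref{extex} from $0\to\syz^t k\to F\to\syz^{t-1}k\to0$), into the snake lemma for the natural transformation $\varphi$; the input that makes the resulting connecting map nonzero is that $(\syz^t k)^\ast=\syz^2\tr\syz^t k$, being a second syzygy, has positive depth and so contains no nonzero finite-length submodule, forcing the displayed extension of the finite-length module $\Ext_R^t(k,R)$ to be non-split. Finally, the ``In particular'' clause follows from Theorem~\ref{gradethm}: with $n=t$, $j=1$ it gives that $\psi_k^i$ satisfies $(\t_1)$ for $1\le i\le t$; and if $\psi_k^t$ satisfied $(\t_2)$ then, since $\psi_k^i$ satisfies $(\t_\infty)$ for $i\le t-1$ by (1), Theorem~\ref{gradethm} with $j=2$ would force $\grade_{R^{\mathrm{op}}}\Ext_R^t(k,R)\ge t+1$, impossible for a nonzero finite-length module; and $\psi_k^{t,t-1}$, $\psi_k^t$ share the same $(\t_n)$-behaviour because they differ by the stable isomorphism $\psi_k^{t-1}$ (Lemma~\ref{unit}(2), Remark~\ref{bydef}(1)).

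For (3): for (i)$\Rightarrow$(ii), if $R$ is Gorenstein then each $R_\p$ is Gorenstein with $\depth R_\p=\height\p$, so $\Ext_R^i(M,R)_\p=\Ext_{R_\p}^i(M_\p,R_\p)=0$ whenever $\height\p<i$; thus $\grade_{R^{\mathrm{op}}}\Ext_R^i(M,R)\ge i$ for all $i$ and all $M$, and Theorem~\ref{gradethm} with $j=1$ makes every $\psi_M^i$ satisfy $(\t_1)$. The implications (ii)$\Rightarrow$(iii)$\Rightarrow$(iv) are specializations. For the remaining equivalences, since $\Ext_R^{t+1}(k,R)$ has finite length, Proposition~\ref{extlength}(2) applied to $M=k$ gives that $\psi_k^{t+1}$ satisfies $(\t_1)$ $\iff$ $\psi_k^{t+1,t}$ satisfies $(\t_1)$ $\iff$ $\Ext_R^{t+1}(k,R)=0$; and $\Ext_R^{t+1}(k,R)=0$ is equivalent to $\id_R R\le t$, i.e. (by Bass) to $R$ being Gorenstein. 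This closes the cycle and yields (iv)$\Leftrightarrow$(v) on the way. (Alternatively, (iv)$\Rightarrow$(i) can be seen directly: (iv) together with ``$\psi_k^i$ satisfies $(\t_1)$ for $i\le t$'' and ``$\syz^i k$ is $i$-torsionfree for $i\le t+1$'' forces $\grade_{R^{\mathrm{op}}}\Ext_R^{t+1}(k,R)\ge t+1$ via Theorem~\ref{gradethm}, whence $\Ext_R^{t+1}(k,R)=0$.) The single step I expect to require genuine work is the vanishing $\Ext_R^{t+1}(\tr\syz^t k,R)=0$ in (2).
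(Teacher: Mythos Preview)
Your overall strategy—routing everything through Theorem~\ref{gradethm} and Proposition~\ref{extlength}—is sound and in places more transparent than the paper's approach, but it differs substantially from what the paper does, and your sketch has one genuine gap.

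\textbf{Part (1).} The paper dispatches this in one line by citing \cite[Proposition 1.1.1]{I} (which gives $\J_i^2 M\approx M$ whenever $\Ext^j(M,R)=0$ for $1\le j\le i$). Your argument via Corollary~\ref{syztrf}, the depth lemma, and Auslander--Buchsbaum is correct but considerably longer.

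\textbf{Part (2), the isomorphisms $\Ext^{i+1}(\tr\syz^i k,R)\cong k$.} The paper observes directly that, since $\Ext^j(k,R)=0$ for $j<t$, the dualized complex $0\to F_0^\ast\to\cdots\to F_{i+1}^\ast$ is a free resolution of $\tr\syz^i k$; dualizing back immediately gives the isomorphism. Your telescoping is equivalent but less direct.

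\textbf{Part (2), the vanishing $\Ext^{t+1}(\tr\syz^t k,R)=0$.} Here the paper simply cites \cite[Theorem 4.1(2)]{DT}. Your sketch has a real gap: you argue that $(\syz^t k)^\ast$ having positive depth forces a certain extension to be non-split, and that this makes ``the resulting connecting map nonzero''. But the relevant extension is $0\to\syz\tr\syz^{t-1}k\to(\syz^t k)^\ast\to\Ext^t(k,R)\to0$ (not the one you display), and while non-splitness does follow from the depth observation, it does \emph{not} by itself force the connecting homomorphism $\delta\colon k=\Ext^{t-1}(\syz\tr\syz^{t-1}k,R)\to\Ext^t(\Ext^t(k,R),R)$ to be nonzero—a nonzero extension class can pair trivially with a particular Ext element. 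You correctly flag this as the delicate point, but your outline does not close it.

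\textbf{Part (2), ``In particular''.} The paper argues directly from the computed values: since $\Ext^1(\tr\J_t^2 k,R)=\Ext^{t+1}(\tr\syz^t k,R)=0$ while $\Ext^1(\tr k,R)\cong\Ext^1(\tr\J_{t-1}^2 k,R)\cong k$, the map $\Ext^1(\tr\psi_k^t,R)\colon 0\to k$ is injective but not surjective. Your route via Theorem~\ref{gradethm} also works.

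\textbf{Part (3).} The paper proves (i)$\Rightarrow$(ii) via \cite[Proposition 4.21 and Corollary 2.32]{AB}, and closes the cycle (v)$\Rightarrow$(i) by showing (v) forces $\Ext^{t+2}(\tr\syz^{t+1}k,R)=0$ (using part (2)) and then invoking \cite[Theorem 4.1(1)(3)]{DT}. Your route through Proposition~\ref{extlength}(2) is cleaner, but it rests on the implication ``$\Ext^{t+1}_R(k,R)=0\Rightarrow R$ Gorenstein''. This is true, but it is the ``no gaps in Bass numbers'' theorem of Foxby and Roberts, not a result of Bass; you should cite it as such rather than as a triviality.
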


\begin{proof}
(1) Since $\Ext^i(k,R)=0$ for all $i<t$, the assertion follows from \cite[Proposition 1.1.1]{I}.

(2) Let $0\le i\le t-1$ be an integer and take a free resolution $\cdots\to F_1\to F_0\to k\to0$ of $k$.
Dualizing it by $R$ gives rise to a free resolution
$$
0\to F_0^\ast\to F_1^\ast\to\cdots\to F_{i-1}^\ast\to F_i^\ast\to F_{i+1}^\ast\to\tr\syz^i k\to0
$$
of $\tr\syz^i k$.
Thus, $\Ext^{i+1}(\tr\syz^i k,R)\cong k$.
It follows from \cite[Theorem 4.1(2)]{DT} that $\Ext^{t+1}(\tr\syz^t k,R)=0$.
Since $\Ext^1(\tr \J^2_{t-1} k,R)\cong k\cong\Ext^1(\tr k,R)$ and $\Ext^1(\tr \J^2_t k,R)\cong0$, the maps $\psi^{t,t-1}_k$ and $\psi^t_k$ satisfy $(\t_1)$ but do not satisfy $(\t_2)$.

(3) The implications $(\text{ii})\Rightarrow(\text{iii})\Rightarrow(\text{iv})$ clearly hold.
The implication $(\text{iv})\Rightarrow(\text{v})$ follows from (1), (2) and Lemma \ref{psicom}.
It follows from \cite[Proposition 4.21 and Corollary 2.32]{AB} that the implication $(\text{i})\Rightarrow(\text{ii})$ holds.
By $(2)$, the condition $(\text{v})$ implies $\Ext^{t+2}(\tr\syz^{t+1}k,R)=0$.
The implication $(\text{v})\Rightarrow(\text{i})$ follows from \cite[Theorem 4.1(1)(3)]{DT}.
\end{proof}

For an integer $n\ge1$, we consider the situation where $R$ satisfies both $(\s_n)$ and $(\g_{n-1})$.
Various studies on this condition have been done so far; see \cite{AB, DT, EG, GT, MTT} for instance.
Below is a direct corollary of the above proposition, which gives a characterization of those rings.

\begin{cor}\label{S_nG_n-1}
Let $n\ge1$ be an integer.
The following are equivalent.
\begin{enumerate}[\rm(1)]
    \item
    The ring $R$ satisfies $(\s_n)$ and $(\g_{n-1})$.
    \item
    The map $\psi^{i}_M:\J^2_i M\to M$ satisfies $(\t_1)$ for all $1\le i\le n$.
    \item
    Let $\p$ be a prime ideal of $R$.
    If $\depth R_{\p}<n$, then $\psi^{i}_{R/{\p}}:\J^2_i ({R/{\p}})\to R/{\p}$ satisfies $(\t_1)$ for all $1\le i\le n$.
\end{enumerate}
\end{cor}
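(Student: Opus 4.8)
The plan is to first re-express condition (1) in a purely local form, and then run the implications $(1)\Rightarrow(2)\Rightarrow(3)\Rightarrow(1)$, using Theorem \ref{gradethm} for the first and Proposition \ref{psik} for the last. The key preliminary observation is that $R$ satisfies $(\s_n)$ and $(\g_{n-1})$ if and only if $R_\p$ is Gorenstein for every prime $\p$ with $\depth R_\p<n$. For one direction: if $\depth R_\p<n$, then $(\s_n)$ together with $\depth R_\p\le\height\p$ forces $\depth R_\p=\height\p\le n-1$, so $(\g_{n-1})$ makes $R_\p$ Gorenstein. Conversely, $(\g_{n-1})$ is clear because $\height\p\le n-1$ implies $\depth R_\p<n$, and $(\s_n)$ holds because if $\depth R_\p<\min\{n,\height\p\}$ then $\depth R_\p<n$, so $R_\p$ is Gorenstein, hence Cohen--Macaulay, giving $\depth R_\p=\height\p$, a contradiction.

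For $(1)\Rightarrow(2)$ I would use this reformulation to deduce that $\grade_{R^{\mathrm{op}}}\Ext_R^i(M,R)\ge i$ for every $R$-module $M$ and every $1\le i\le n$. Indeed, for $\q\in\supp\Ext_R^i(M,R)$ one has $\Ext_{R_\q}^i(M_\q,R_\q)\ne0$; if $\depth R_\q<n$ then $R_\q$ is Gorenstein, so its injective dimension equals $\depth R_\q$, forcing $i\le\depth R_\q$, and in all cases $\depth R_\q\ge i$. Since $\grade_{R^{\mathrm{op}}}\Ext_R^i(M,R)$ is the infimum of $\depth R_\q$ over $\q\in\supp\Ext_R^i(M,R)$, it is at least $i$. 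Applying Theorem \ref{gradethm} with $j=1$ to each $M$ then gives, in particular, that $\psi_M^i$ satisfies $(\t_1)$ for all $1\le i\le n$, which is (2).

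The implication $(2)\Rightarrow(3)$ is immediate by specializing to $M=R/\p$. For $(3)\Rightarrow(1)$, by the reformulation it suffices to prove that $R_\p$ is Gorenstein whenever $\depth R_\p=t<n$. The construction of $\psi_M^i$ commutes with localization, since syzygy, transpose and $R$-dual all do ($R$ being noetherian and $M$ finitely generated), so $(\psi_{R/\p}^i)_\p\approx\psi_{\kappa(\p)}^i$ over $R_\p$, where $\kappa(\p)$ is the residue field of $R_\p$. Moreover the condition $(\t_1)$ localizes — this is clear from Theorem \ref{T_1}, as the vanishing $\Ker f\cap\Ker\varphi_X=0$ is preserved by the flat functor $(-)_\p$ — and is invariant under $\approx$ by Remark \ref{bydef}(1). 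Hence (3) implies that $\psi_{\kappa(\p)}^i$ satisfies $(\t_1)$ over $R_\p$ for all $1\le i\le n$; taking $i=t+1\le n$ and invoking the implication $(\mathrm{iv})\Rightarrow(\mathrm{i})$ of Proposition \ref{psik}(3) for the local ring $R_\p$, we conclude that $R_\p$ is Gorenstein, and therefore (1) holds.

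I expect the main obstacle to be the bookkeeping concentrated in the first two steps: verifying that $(\s_n)\wedge(\g_{n-1})$ is precisely the condition ``$R_\p$ Gorenstein for all $\p$ with $\depth R_\p<n$'', and checking carefully that both the map $\psi_M^i$ and the property $(\t_1)$ are compatible with localization. Once these are settled, the three implications reduce to direct applications of Theorem \ref{gradethm} and Proposition \ref{psik}.
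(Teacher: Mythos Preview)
Your proof is correct and follows essentially the same route as the paper. The paper cites external references (\cite[Theorem 2.3]{MTT} for the equivalence between $(\s_n)\wedge(\g_{n-1})$ and local Gorensteinness at primes of small depth, and \cite[Proposition 4.21 and Corollary 2.32]{AB} for $(1)\Rightarrow(2)$), whereas you spell these out explicitly and invoke the paper's own Theorem \ref{gradethm} instead; the argument for $(3)\Rightarrow(1)$ via localization and Proposition \ref{psik} is identical.
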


\begin{proof}
The implication $(2)\Rightarrow(3)$ clearly holds, and $(1)\Rightarrow(2)$ follows from \cite[Proposition 4.21 and Corollary 2.32]{AB}.
Assume that the condition $(3)$ holds.
We want to prove that $R$ satisfies $(\s_n)$ and $(\g_{n-1})$.
For this, it is enough to show that $R_{\p}$ is Gorenstein for each prime ideal $\p$ of $R$ with $\depth R_{\p}\le n-1$; see \cite[Theorem 2.3]{MTT} for instance.
Let $t=\depth R_{\p}$.
As $1\le t+1\le n$, the map $\psi^{t+1}_{R/{\p}}$ satisfies $(\t_1)$, and we have a stable isomorphism $\psi^{t+1}_{R/{\p}}\otimes R_{\p}\approx\psi^{t+1}_{R_{\p}/{\p R_{\p}}}$.
Hence $\psi^{t+1}_{R_{\p}/{\p R_{\p}}}$ satisfies $(\t_1)$.
By Proposition \ref{psik}, $R_{\p}$ is Gorenstein and we are done.
\end{proof}


\begin{ac}
The author would like to thank his supervisor Ryo Takahashi for a lot of valuable discussions and advice.

\end{ac}


\begin{thebibliography}{99}
\bibitem{AB}
{\sc M. Auslander; M. Bridger}, Stable module theory, Memoirs of the American Mathematical Society {\bf 94}, {\em American Mathematical Society, Providence, R.I.}, 1969.
\bibitem{AR}
{\sc M. Auslander; I. Reiten}, Syzygy modules for Noetherian rings, {\em J. Algebra} {\bf 183} (1996), no. 1, 167--185.
\bibitem{AS}
{\sc M. Auslander; S. O. Smal\o}, Preprojective modules over Artin algebras, {\em J. Algebra} {\bf 66} (1980), no. 1, 61--122.
\bibitem{BH}
{\sc W. Bruns; J. Herzog}, Cohen--Macaulay rings, revised edition, Cambridge Studies in Advanced Mathematics {\bf 39}, {\it Cambridge University Press, Cambridge}, 1998.
\bibitem{DT}
{\sc S. Dey; R. Takahashi}, On the subcategories of n-torsionfree modules and related modules, {\em Collect. Math.} (to appear), {\tt arXiv:2101.04465}
\bibitem{EG}
{\sc E. G. Evans; P. Griffith}, Syzygies, London Mathematical Society Lecture Note Series {\bf 106}, {\em Cambridge University Press, Cambridge}, 1985.
\bibitem{GT}
{\sc S. Goto; R. Takahashi}, Extension closedness of syzygies and local Gorensteinness of commutative rings, {\em Algebr. Represent. Theory} {\bf 19} (2016), no. 3, 511--521.
\bibitem{I}
{\sc O. Iyama}, Higher-dimensional Auslander--Reiten theory on maximal orthogonal subcategories, {\em Adv. Math.} {\bf 210} (2007), no. 1, 22--50.
\bibitem{KK}
{\sc K. Kato}, Morphisms represented by monomorphisms, {\em J. Pure Appl. Algebra} {\bf 208} (2007), no. 1, 261--283.
\bibitem{MTT}
{\sc H. Matsui; R. Takahashi; Y. Tsuchiya}, When are $n$-syzygy modules $n$-torsionfree?, {\em Arch. Math. (Basel)} {\bf 108} (2017), no. 4, 351--355.
\bibitem{Mat}
{\sc H. Matsumura}, Commutative ring theory, Translated from the Japanese by M. Reid, Second edition, Cambridge Studies in Advanced Mathematics {\bf 8}, {\em Cambridge University Press, Cambridge}, 1989.
\end{thebibliography}
\end{document}